\documentclass{amsart}


\usepackage[foot]{amsaddr}
\usepackage[utf8]{inputenc}
\usepackage[T1]{fontenc}
\usepackage{amssymb,latexsym}
\usepackage{amsmath}
\usepackage{amsthm}
\usepackage{amssymb}
\usepackage{graphicx}
\usepackage{color}
\usepackage[shortlabels]{enumitem}
\usepackage[colorlinks=true,citecolor={blue},draft=false, urlcolor={black}]{hyperref}
\usepackage{cleveref}



\def\N{\mathbb N}
\def\A{\mathcal A}

\def\uu{\mathbf u}

\def\Der{\mathrm{Der}}



\addtolength{\voffset}{-1cm} 
\addtolength{\hoffset}{-1.5cm} 
\setlength{\textheight}{22cm} \setlength{\textwidth}{16cm}


\def\N{\mathbb N}
\def\A{\mathcal A}


\newtheorem{thm}{Theorem}
\newtheorem{theorem}[thm]{Theorem}
\newtheorem{coro}[thm]{Corollary}

\newtheorem{lem}[thm]{Lemma}
\newtheorem{lmm}[thm]{Lemma}

\newtheorem{prop}[thm]{Proposition}
\newtheorem{defi}[thm]{Definition}

\crefname{thm}{theorem}{theorems}
\crefname{theorem}{theorem}{theorems}
\crefname{coro}{corollary}{corollaries}
\crefname{example}{example}{examples}
\crefname{lem}{lemma}{lemmas}
\crefname{lmm}{lemma}{lemmas}
\crefname{claim}{claim}{claims}
\crefname{obs}{observation}{observations}
\crefname{proposition}{proposition}{propositions}
\crefname{prop}{proposition}{propositions}
\crefname{defi}{definition}{definitions}

\newtheorem{remark}[thm]{Remark}

\newtheorem{example}[thm]{Example}

\crefname{example}{example}{examples}


\begin{document}

\title{Fixed points of Sturmian morphisms and their derivated words}


\author{Karel Klouda}

\author{Kateřina Medková}
\thanks{\textit{E-mail:} \url{medkokat@fjfi.cvut.cz} (K. Medková)}

\author{Edita Pelantov\'a}

\author{Štěpán Starosta}

\address[K. Medková, E. Pelantová]{Department of Mathematics, Faculty of Nuclear Sciences and Physical Engineering, Czech Technical University in Prague, Břehová 7, 115 19, Prague 1, Czech Republic}

\address[K. Klouda, Š. Starosta]{Department of Applied Mathematics, Faculty of Information Technology, Czech Technical University in Prague, Thákurova 9, 160 00, Prague 6, Czech Republic}

\date{\today}

\begin{abstract}
Any infinite uniformly recurrent word  ${\bf u}$ can be written as concatenation of a finite number of return words to a chosen prefix $w$ of ${\bf u}$.
Ordering of the return words to $w$ in this concatenation is coded by derivated word $d_{\bf u}(w)$.
In 1998, Durand proved that a fixed point ${\bf u}$ of a primitive morphism has only finitely many derivated words $d_{\bf u}(w)$  and each derivated word  $d_{\bf u}(w)$ is fixed by a primitive morphism as well.
In our article we focus on  Sturmian words fixed by a primitive morphism.
We provide an algorithm which to a given Sturmian morphism $\psi$  lists  the morphisms fixing the derivated words of the Sturmian word ${\bf u} = \psi({\bf u})$.
We provide a sharp upper bound on length of the list.
\newline

\noindent \textit{Keywords:} Derivated word, Return word, Sturmian morphism, Sturmian word

\noindent \textit{2000MSC:} 68R15
\end{abstract}

\maketitle

%
%
\section{Introduction}

Sturmian words are probably the most studied object in combinatorics on words.
They are aperiodic words over a binary alphabet having the least factor complexity possible.
Many properties, characterizations and generalizations are known, see for instance \cite{BeSe_Lothaire,Be_survey_corr,BaPeSta2}.

One of their characterizations is in terms of return words to their factors.
Let $\uu = u_0u_1u_2 \cdots$ be a binary infinite word with $u_i \in \{0,1\}$.
Let $w = u_iu_{i+1} \cdots u_{i+n-1}$ be its factor.
The integer $i$ is called an \emph{occurrence} of the factor $w$.
A return word to a factor $w$ is a word $u_iu_{i+1} \cdots u_{j-1}$ with $i$ and $j$ being two consecutive occurrences of $w$ such that $i < j$.
In \cite{Vu}, Vuillon showed that an infinite word $\uu$ is Sturmian if and only if each nonempty factor $w$ has exactly two distinct return words.
A straightforward consequence of this characterization is that if $w$ is a prefix of $\uu$, we may write
\[
\uu = r_{s_0}r_{s_1}r_{s_2}r_{s_3}\cdots
\]
with $s_i \in \{0,1\}$ and $r_0$ and $r_1$ being the two return words to $w$.
The coding of these return words, the word $d_{\uu}(w) = s_0s_1s_2 \cdots$ is called the \emph{derivated word of $\uu$ with respect to $w$}, introduced in \cite{Durand98}.
A simple corollary of the characterization by return words and a result of \cite{Durand98} is that the derivated word $d_{\uu}(w)$ is also a Sturmian word (see \Cref{thm:derstst}).
This simple corollary follows also from other results.
For instance, it follows from \cite{AraBru05}, where the authors investigate the derivated word of a standard Sturmian word and give its precise description.
It also follows from the investigation of a more general setting in~\cite{BuLu}, which may in fact be used to describe derivated words of any episturmian word --- generalized Sturmian words~\cite{GlJu}.

By the main result of \cite{Durand98}, if $\uu$ is a fixed point of a primitive morphism, the set of all derivated words of $\uu$ is finite (the result also follows from \cite{HoZa99}).
In this case, again by \cite{Durand98}, a derivated word itself is a fixed point of a primitive morphism.

In this article we study derivated words of fixed points of primitive Sturmian morphisms.
By the results of \cite{MiSe93}, any primitive Sturmian morphism   may be decomposed using elementary Sturmian morphisms ---  generators of the Sturmian monoid. 
In \Cref{thm:preimages_fi_b,thm:preimages_fi_a}, we describe the relation between the set of derivated words of a Sturmian sequence $\uu$ and the set of derivated words of $\varphi(\uu)$, where $\varphi$ is a generator of the Sturmian monoid.

The main result of our article is an exact description of the morphisms fixing the derivated words $d_{\uu}(w) $ of $\uu$, where $\uu$ is fixed by a Sturmian morphism $\psi$ and $w$ is its prefix.
For this purpose, we introduce an operation $\Delta$ acting on the set of Sturmian morphisms with unique fixed point, see \Cref{def:delta}.
Iterating this operation  we create the desired list of the morphisms as stated in \Cref{thm:main_result_vetsina}.
The Sturmian morphisms with two fixed points are treated separately, see \Cref{lem:revers_of_standard}.

We continue our study by counting the number of derivated words, in particular by counting the  distinct elements in the sequence $\bigl(\Delta^k(\psi)\bigr)_{k \geq 1}$. This number depends on the decomposition of $\psi$ into the generators of the special Sturmian monoid, see below in~\Cref{sec:stu_mor}. 

Using this decomposition, \Cref{cor:standard_sturmian,prop:number_for_aalpha} provide the exact number of derivated words for two specific classes of Sturmian morphisms.

For a general Sturmian morphism $\psi$, \Cref{coro:bounds_on_nr_of_der_words} gives  a sharp upper bound on their number.
The upper bound depends on the number of the elementary morphisms in the decomposition of $\psi$.
In the last section, we give some comments and state open questions.

%
%
\section{Preliminaries}

An \emph{alphabet} $\mathcal{A}$ is a finite set of symbols called \emph{letters}.
A \emph{finite word} of length $n$ over $\mathcal {A}$ is a string $u=u_0u_1\cdots u_{n-1}$, where $u_i \in \mathcal{A}$ for all $i=0,1,\ldots, n-1$.
The \emph{length} of $u$  is denoted by $|u| = n$.  By $|u|_a$ we denote the  number of copies of the letter  $a$  used in $u$, i.e. $|u|_a= \# \{i \in \mathbb{N} \colon  i< n, u_i = a\}$. 
The set of all finite words over  $\mathcal{A}$ together with the operation of concatenation forms a monoid $\mathcal{A}^*$.
Its neutral element is the \emph{empty word} $\varepsilon$ and $\A^+ = \A^* \setminus  \left\{  \varepsilon \right\}$. On this monoid we  work with two operations which preserve the length of words. The \emph{mirror image}  or \emph{reversal} of a  word $u=u_0u_1\cdots u_{n-1} \in \mathcal{A}^*$  is  the  word $\overline{u} = u_{n-1}u_{n-2}\cdots u_{1}u_0$. The \emph{cyclic shift}  of $u$  is  the  word 
\begin{equation}\label{eq:def_of_cyc}
{\rm cyc}(u) = u_{1}u_{2}\cdots u_{n-1}u_0.
\end{equation}

An \emph{infinite word} over $\mathcal {A}$ is a sequence $\uu = u_0u_1u_2\cdots  = \left(u_i\right)_{i\in \mathbb{N}} \in \mathcal{A}^{\mathbb{N}}$ with $u_i \in \mathcal{A}$ for all $i \in \N = \left\{ 0,1,2, \ldots \right \}$.
Bold letters are systematically used to denote infinite words throughout this article.

A finite word $p \in \mathcal{A}^*$ is a \emph{prefix} of $u=u_0u_1\cdots u_{n-1}$ if $p= u_0u_1u_2\cdots u_{k-1}$ for some $k\leq n$, the word $u_k u_{k+1} \cdots u_{n-1}$ is denoted $p^{-1}u$. 
Similarly, $p \in \mathcal{A}^*$ is a prefix of $\uu= u_0u_1u_2\cdots$ if $p=u_0u_1u_2\cdots u_{k-1}$ for some integer $k$. We usually abbreviate  $u_0u_1u_2\cdots u_{k-1} = \uu_{[0,k)}$.

A finite word $w$ is a \emph{factor} of $\uu= u_0u_1u_2\cdots$  if there exists an index $i$ such that $w$ is a prefix of the infinite word $u_iu_{i+1}u_{i+2}\cdots$.
The index $i$ is called an \emph{occurrence} of $w$ in $\uu$.
If each factor of $\uu$ has infinitely many occurrences in $\uu$, the word $\uu$ is \emph{recurrent}.

The \emph{language $\mathcal{L}(\uu)$ of an infinite word $\uu$} is the set of all its factors.
The mapping $\mathcal{C}_{\uu}: \mathbb{N}\mapsto \mathbb{N}$ defined by $\mathcal{C}_{\uu}(n) = \#\{w \in \mathcal{L}(\uu): |w|=n\}$ is called the \emph{factor complexity} of the word $\uu$.

An infinite word $\uu$ is \emph{eventually periodic} if $\uu = wvvvvv\cdots $ for some $v,w \in \mathcal{A}^*$.
If $w$ is the shortest such word possible, we say that $|w|$ is the \emph{preperiod} of $\uu$; if $v$ is the shortest possible, we say that $|v|$ is the \emph{period} of $\uu$.
If $\uu$ is not eventually periodic, it is \emph{aperiodic}.
A factor $w$ of $\uu$ is a \emph{right special} factor if there exist at least two letters $a,b \in \mathcal{A}$ such that $wa, wb$ belong to the language $\mathcal{L}(\uu)$.
A \emph{left special} factor is defined analogously.

An infinite word $\uu$ is eventually periodic if and only if $\mathcal{L}(\uu)$ contains only finitely many right special factors.
Equivalently, $\uu$ is eventually periodic if and only if its factor complexity $\mathcal{C}_{\uu}$ is bounded.
On the other hand, the factor complexity of any aperiodic word satisfies $\mathcal{C}_{\uu}(n)\geq n+1$  for every $n \in \mathbb{N}$.

An infinite word $\uu$ with $ \mathcal{C}_{\uu}(n)= n+1$ for each $n \in \mathbb{N}$ is called Sturmian.
A Sturmian word is \emph{standard} (or \emph{characteristic}) if each of its prefixes is a left special factor.

\subsection{Derivated words} \label{SecDerivatedWords}

Consider a prefix $w$ of an infinite recurrent word $\uu$.
Let $i<j$ be two consecutive occurrences of $w$ in $\uu$.
The string $u_iu_{i+1}\cdots u_{j-1}$ is a \emph{return word} to $w$ in $\uu$.
The set of all return words to $w$ in $\uu$ is denoted by $\mathcal{R}_{\uu}(w)$.
Let us suppose that the set of return words to $w$ is finite, i.e. $\mathcal{R}_{\uu}(w) = \{r_0, r_1, \ldots, r_{k-1}\}$.
The word $\uu$ can be written as unique concatenation of the return words $\uu = r_{s_0}r_{s_1}r_{s_2}\cdots$.
The \emph{derivated} word of $\uu$ with respect to the prefix $w$ is the infinite word
\[
d_{\uu}(w) =  s_0s_1s_2 \cdots
\]
over the alphabet of cardinality $ \#\mathcal{R}_{\uu}(w)=k $.
In his original definition,  Durand \cite{Durand98} fixed the alphabet of the derivated  word to the set $\{0,1, \ldots, k-1\}$.
Moreover, Durand's definition requires that  for $i< j$ the first occurrence of $r_i$  in $\uu$ is less than the first occurrence of $r_j$ in $\uu$.
In particular,  a derivated  word always starts with the letter $0$.
In the article \cite{AraBru05}, where  derivated  words of standard Sturmian words are studied, the authors required that the starting letters of the original word and its derivated word coincide.
For our purposes, we do not need to fix the alphabet of derivated words: two derivated words which differ only by a permutation of letters are identified one with another.

In the sequel, we work only with infinite words which are \emph{uniformly recurrent}, i.e. each prefix $w$ of $\uu$ occurs in $\uu$ infinitely many times and the set $\mathcal{R}_{\uu}(w)$  is finite.
Our aim is to describe the set
\[
\Der(\uu) = \{ d_{\uu}(w) \colon w \text{ is a prefix of }  \uu\}.
\]
Clearly, if a prefix $w$ is not right special, then there exists a unique letter $x$ such that $wx \in \mathcal{L}(\uu)$.
Thus the occurrences of $w$ and $wx$ coincide, $\mathcal{R}_{\uu}(w) = \mathcal{R}_{\uu}(wx)$ and $d_{\uu}(w)$ = $d_{\uu}(wx)$.
If $\uu$ is not eventually periodic, then $w$ is a prefix of a right special prefix of $\uu$.
Therefore for an aperiodic uniformly recurrent  word $\uu$ we have
\[
\Der(\uu) = \{ d_{\uu}(w) \colon w \text{ is a right special prefix of }  \uu \}.
\]

\subsection{Sturmian words}
Any Sturmian word  $\uu$ can be identified with an upper or lower mechanical word.  A mechanical word is described  by two parameters: slope and intercept. The slope  is an irrational number $\gamma \in (0,1)$  and the intercept is a real number $\rho \in [0,1)$.  To define the lower mechanical word ${\bf s}(\gamma, \rho) = \left(s_{n} (\gamma, \rho) \right)_{n \in \N}$ we put  $I_0 =  [0,1-\gamma)$.   The $n^{th}$ letter of ${\bf s}(\gamma, \rho)$ is as follows: 
$$
s_n(\gamma, \rho) =
\begin{cases}
0 & \text{if the number }    \gamma n + \rho \mod 1 \text{ belongs to }  I_0, \\
1 & \text{otherwise.}
\end{cases}
$$
The definition of the upper mechanical word  ${\bf s}'(\gamma, \rho) = \left(s'_n(\gamma, \rho) \right)_{n \in \N}$ is analogous, it just uses the interval $I_0 = (0,1-\gamma]$.  Let us stress that $s_n(\gamma, \rho) \neq s'_n(\gamma, \rho)$ for at most two neighbouring indices  $n$ and $n+1$.  All upper and lower mechanical words with irrational slope are Sturmian and  any Sturmian word equals to a lower or to an upper mechanical word.
Let us stress that one-sided Sturmian words with irrational slope are always uniformly recurrent.
The language of a Sturmian word depends only on $\gamma$.
The number $\gamma$ is in fact the density of the letter $1$, i.e., $\gamma = \lim\limits_{n\to \infty}\tfrac1n \# \left \{i\in \mathbb{N} \colon i < n, s_i(\gamma, \rho) = 1 \right \} $.   Consequently, $1-\gamma$ is the density of the letter $0$.

For any irrational $\gamma \in (0,1)$ there exists a unique  mechanical word ${\bf c}(\gamma)$ with slope $\gamma$ such that  both  $0{\bf c}(\gamma)$  and $1{\bf c}(\gamma)$ are Sturmian.
The word ${\bf c}(\gamma)$ is a standard Sturmian word and  ${\bf c}(\gamma) ={\bf s}(\gamma, \gamma) ={\bf s}'(\gamma, \gamma)$.
Many further properties of Sturmian words can be found in \cite{Lo83,BeSe_Lothaire}.

For our study of derivated words, the following result of Vuillon from \cite{Vu} is important: a word $\uu$  is Sturmian if and only if any prefix of $\uu$ has exactly two return words.
By combining this result with \cite{Durand98}, we obtain an essential observation about derivated words of Sturmian words, which also follows from \cite{AraBru05}.

\begin{theorem} \label{thm:derstst}
If $\uu$ is a Sturmian word and $w$ is a prefix of $\uu$, then its derivated word $d_{\uu}(w)$ is Sturmian as well.
\end{theorem}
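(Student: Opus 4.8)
The plan is to place the argument between the two directions of Vuillon's characterization of Sturmian words \cite{Vu}, with Durand's analysis of return words in a derived sequence \cite{Durand98} playing the role of the bridge. Since $w$ is a prefix of the Sturmian word $\uu$, Vuillon's theorem already gives that $\mathcal{R}_\uu(w)=\{r_0,r_1\}$ consists of exactly two distinct words, so the derivated word $d_\uu(w)=s_0s_1s_2\cdots$ is a word over a two-letter alphabet. Let $\Phi$ denote the morphism $0\mapsto r_0$, $1\mapsto r_1$, so that $\uu=\Phi\bigl(d_\uu(w)\bigr)$; directly from the definition of return words, the occurrences of $w$ in $\uu$ are exactly the \emph{return boundaries} $0,\, |r_{s_0}|,\, |r_{s_0}r_{s_1}|,\, \dots$. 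The target is to show that every nonempty factor of $d_\uu(w)$ has exactly two return words, after which the converse direction of Vuillon's theorem yields that $d_\uu(w)$ is Sturmian.

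Fix a nonempty factor $v$ of $d_\uu(w)$, say $v=s_i s_{i+1}\cdots s_{j-1}$, and set $w'=\Phi(v)w=r_{s_i}r_{s_{i+1}}\cdots r_{s_{j-1}}w$. Since $w'$ occurs in $\uu$ starting from the return boundary indexed by $i$, it is a factor of the Sturmian word $\uu$, so $\#\mathcal{R}_\uu(w')=2$ by Vuillon's theorem. It then suffices to carry this count back through $\Phi$, that is, to show that $\Phi$ restricts to a bijection $\mathcal{R}_{d_\uu(w)}(v)\to\mathcal{R}_\uu(w')$. This is the one substantial point, and the step I expect to be the main obstacle: it is precisely the \emph{synchronization} property of the return-word factorisation, namely that every occurrence of $w'$ in $\uu$ is forced to begin at a return boundary and to be continued exactly by the return words coding $v$. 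Granted this, consecutive occurrences of $v$ in $d_\uu(w)$ correspond to consecutive occurrences of $w'$ in $\uu$, and the $\Phi$-image of the factor of $d_\uu(w)$ lying between two such occurrences of $v$ is the factor of $\uu$ between the corresponding occurrences of $w'$; hence $\Phi$ is the claimed bijection and $\#\mathcal{R}_{d_\uu(w)}(v)=2$. To prove the synchronization from scratch one would analyse how the occurrences of $w$ in $\uu$ are distributed inside $w'$, exploiting that $\mathcal{R}_\uu(w)$ behaves as a circular code for $\uu$; this is exactly the technical core of Durand's treatment of derived sequences, and in the write-up it can simply be quoted from \cite{Durand98} (it also underlies the description in \cite{AraBru05}).

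Once the previous paragraph is in place for every nonempty factor $v$ of $d_\uu(w)$, the word $d_\uu(w)$ is recurrent --- indeed uniformly recurrent, as is any derived sequence of a uniformly recurrent word by \cite{Durand98} --- and each of its nonempty factors has exactly two distinct return words. By the converse direction of Vuillon's theorem, $d_\uu(w)$ is Sturmian. As noted in the introduction, the same conclusion can alternatively be read off from the explicit description of derivated words of standard Sturmian words in \cite{AraBru05} or from the more general framework of \cite{BuLu}.
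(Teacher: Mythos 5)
Your proof is correct and follows essentially the same route as the paper: both arguments sandwich the claim between the two directions of Vuillon's characterization, using Durand's return-word machinery as the bridge. The only difference is cosmetic --- the paper invokes Durand's Proposition 2.6 directly on prefixes $p$ of $d_{\uu}(w)$ to get $d_{d_{\uu}(w)}(p)=d_{\uu}(q)$ for some prefix $q$ of $\uu$, whereas you unfold the underlying synchronization/bijection argument for arbitrary factors before quoting Durand for its technical core.
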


\begin{proof}
Set ${\bf v} = d_{\bf u}(w)$.
Let  $p$  be a prefix of ${\bf v}$.
Due to Proposition 2.6 in \cite{Durand98}, there exists a prefix $q$ of $\uu$ such that $d_{\bf v}(p) = d_{\bf u}(q) $.
By Vuillon's characterization of Sturmian words, the word $ d_{\bf u}(q)$ is binary.
It means that  any prefix $p$ of  ${\bf v} $ has  two return words in ${\bf v} $ and so ${\bf v} $ is Sturmian.
\end{proof}

\begin{remark}[Historical]
	The Sturmaian words (sequences) were originally defined by Hedlund and Morse in~\cite{HeMo}. Their definition is more general as they consider also biinfinite words and (in terms of our definition above) rational slopes. Hence their Sturmian words may not be recurrent. For details on the history of definition of Sturmian words see~\cite{Fogg}, especially the historical remark at page~146.
	Interestingly enough, the term \textit{derivated} sequence is also used in~\cite{HeMo}, however, its definition differs from our one (as taken from~\cite{Durand98}): 
	Using again our terminology, their derivated word is a derivated word with respect to a one-letter word in a biinfinite Sturmian word.
\end{remark}

\subsection{Sturmian morphisms} \label{sec:stu_mor}

A \textit{morphism} over $\mathcal{A}^*$ is a mapping $\psi : \mathcal{A}^* \mapsto  \mathcal{A}^*$  such that $\psi(vw) = \psi(v)\psi(w)$ for all $v,w \in \mathcal{A}^*$.
The domain of the  morphism $\psi$ can be naturally extended to $\mathcal{A}^{\mathbb{N}}$ by
$$
\psi(u_0u_1u_2\cdots ) = \psi(u_0)\psi(u_1) \psi(u_2)\cdots .
$$
A morphism  $\psi$ is \emph{primitive} if there exists a positive integer  $k$ such that the letter $a$ occurs in the  word  $\psi^k(b)$ for each pair of letters $a,b \in  \mathcal{A}$.
A \emph{fixed point} of a morphism $\psi$ is an infinite word $\uu$ such that  $\psi(\uu) = \uu$.

A morphism $\psi$ is a \emph{Sturmian morphism} if $\psi(\uu)$ is a Sturmian word for any Sturmian word $\uu$. The set of Sturmian morphisms together with composition forms the so-called  \emph{Sturmian monoid} usually denoted {\it St}.
We work with these four elementary Sturmian morphisms:
$$
	\varphi_a: \begin{cases} 0 \to 0 \\ 1 \to 10 \end{cases} \quad
	\varphi_b: \begin{cases} 0 \to 0 \\ 1 \to 01 \end{cases} \quad
	\varphi_\alpha: \begin{cases} 0 \to 01 \\ 1 \to 1 \end{cases} \quad
	\varphi_\beta: \begin{cases} 0 \to 10 \\ 1 \to 1 \end{cases}
$$
 and  with the monoid $\mathcal{M}$ generated by them, i.e.
$\mathcal{M} = \langle \varphi_a, \varphi_b, \varphi_\alpha, \varphi_\beta \rangle $.
The monoid $\mathcal{M}$ is also called \emph{special Sturmian monoid}.
For a nonempty word $u = u_0\cdots u_{n-1}$ over the alphabet $\{a,b,\alpha,\beta\}$ we put
$$ \varphi_u = \varphi_{u_0} \circ \varphi_{u_1} \circ \cdots \circ \varphi_{u_{n-1}}.
$$

The monoid $\mathcal{M}$ is not free.
It is easy to show that for any $k \in \N$ we have  	$$\varphi_{\alpha a^k\beta } = \varphi_{\beta b^k\alpha}\quad \text{ and } \quad \varphi_{a\alpha^kb} = \varphi_{b\beta^ka}.$$
We can equivalently say that the following rewriting rules hold on the set of words from $\{a,b,\alpha,\beta\}^*$:
\begin{equation}\label{eq:relations}
  \alpha a^k\beta = \beta b^k\alpha \quad \text{ and } \quad  a\alpha^kb = b\beta^ka\qquad \text{ for any $k \in \N$ }.
\end{equation}

In~\cite{See91}, the author reveals a presentation of the Sturmian monoid  which includes the special Sturmian monoid  $\mathcal{M}= \langle \varphi_a, \varphi_b,\varphi_\alpha, \varphi_\beta\rangle$.
A presentation of the special Sturmian monoid follows from this result.
It is also given explicitly in~\cite{ReKa07}:
\begin{thm}\label{thm:relations}
Let $w,v\in \{a,b,\alpha,\beta\}^*$.
The morphism $\varphi_w$ is equal to $\varphi_{v}$ if and only if the word $v$ can be obtained from $w$ by applying the rewriting rules~\eqref{eq:relations}.
\end{thm}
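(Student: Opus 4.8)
The plan is to prove the two implications separately. The implication ``rewriting‑equivalent words define the same morphism'' is a direct verification: for every $k\ge 0$ one computes the images of the letters $0$ and $1$ and finds $\varphi_{\alpha a^k\beta}(x)=\varphi_{\beta b^k\alpha}(x)$ and $\varphi_{a\alpha^k b}(x)=\varphi_{b\beta^k a}(x)$ for $x\in\{0,1\}$ (the case $k=0$ gives in particular the commutations $\varphi_a\varphi_b=\varphi_b\varphi_a$ and $\varphi_\alpha\varphi_\beta=\varphi_\beta\varphi_\alpha$). Since a rule of~\eqref{eq:relations} may be applied inside any word, it follows that $\varphi_w=\varphi_v$ whenever $v$ is obtained from $w$ by~\eqref{eq:relations}.

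For the converse, assume $\varphi_w=\varphi_v$. The first step is to abelianize: the map $\psi\mapsto M_\psi$ with $(M_\psi)_{xy}=|\psi(y)|_x$ is a monoid morphism sending both $\varphi_a,\varphi_b$ to $L=\left(\begin{smallmatrix}1&1\\0&1\end{smallmatrix}\right)$ and both $\varphi_\alpha,\varphi_\beta$ to $R=\left(\begin{smallmatrix}1&0\\1&1\end{smallmatrix}\right)$. Since the monoid generated by $L$ and $R$ is free on $\{L,R\}$ (a classical fact, equivalent to uniqueness of continued fraction expansions of rationals), the projection $\pi\colon\{a,b,\alpha,\beta\}^*\to\{L,R\}^*$ given by $\pi(a)=\pi(b)=L$, $\pi(\alpha)=\pi(\beta)=R$ satisfies $\pi(w)=\pi(v)$. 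In particular $|w|=|v|$ and $w,v$ have the same factorization into maximal blocks over $\{a,b\}$ and maximal blocks over $\{\alpha,\beta\}$, with corresponding blocks of equal length; the only undetermined data is how many letters of each block are ``barred''. Using the $k=0$ relations to sort the blocks, it remains to characterize, among words with a fixed block skeleton, those that define the same morphism.

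I would finish by induction on $n=|w|=|v|$. If $w$ and $v$ begin with the same letter $c$, one left‑cancels it --- each elementary morphism is injective on $\{0,1\}^*$ since its pair of images is a (prefix or suffix) code, so $\varphi_{cw'}=\varphi_{cv'}$ forces $\varphi_{w'}=\varphi_{v'}$ --- and applies the induction hypothesis; right‑cancellation is analogous, recovering the images of $0$ and $1$ from those of $\varphi_c(0)$ and $\varphi_c(1)$. The remaining, and crucial, case is when $w$ and $v$ begin with distinct letters of the same type, say $w=\alpha w'$ and $v=\beta v'$. Examining which letters $\varphi_w(x)$ and $\varphi_v(x)$ start and end with for $x\in\{0,1\}$, and using the conjugacy identities $1\,\varphi_\alpha(u)=\varphi_\beta(u)\,1$ and $0\,\varphi_a(u)=\varphi_b(u)\,0$ (valid for all $u\in\{0,1\}^*$, checked on letters and extended multiplicatively), one should be able to show that the prefix of $w$ --- or of $v$ --- must have the form $\alpha a^k\beta$ (resp.\ $\beta b^k\alpha$); one application of~\eqref{eq:relations} then makes $w$ and $v$ start with the same letter and reduces to the cancellation case for a smaller $n$. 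I expect this last step to be the main obstacle: proving that the prefix constraints really force a left‑hand side of~\eqref{eq:relations} to occur, leaving no room for a coincidence $\varphi_w=\varphi_v$ not generated by~\eqref{eq:relations}. This is essentially the case analysis of~\cite{See91,ReKa07}. Alternatively one can orient~\eqref{eq:relations} into a terminating rewriting system --- the length‑$2$ rules sort blocks and the longer ones strictly decrease the number of $b$'s and $\beta$'s --- whose critical pairs are easily checked to resolve (the longer rules in fact have no nontrivial overlaps), so that it is confluent, and then verify that the resulting normal form of $w$ is determined by the pair $\bigl(\varphi_w(0),\varphi_w(1)\bigr)$; this last recovery step is again where the real work lies.
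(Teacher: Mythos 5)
First, a point of context: the paper does not prove this theorem at all --- it is imported from Séébold~\cite{See91} and Richomme--Kassel~\cite{ReKa07} --- so your proposal can only be measured against those sources, which you yourself end up invoking. The parts of your argument that are actually carried out are correct and standard: the ``if'' direction is a finite verification; the abelianization onto the monoid generated by the matrices $\bigl(\begin{smallmatrix}1&1\\0&1\end{smallmatrix}\bigr)$ and $\bigl(\begin{smallmatrix}1&0\\1&1\end{smallmatrix}\bigr)$, which is indeed free, shows that $\varphi_w=\varphi_v$ forces $|w|=|v|$ and the same Latin/Greek pattern (the invariance the paper records just before defining $N(w)$); and cancelling a common first or last letter is legitimate because each generator is injective on $\{0,1\}^*$.

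The gap is that the one case carrying the entire content of the theorem --- $w=\alpha w'$, $v=\beta v'$ with $\varphi_w=\varphi_v$ (and its Latin analogue) --- is asserted rather than proved. You claim that a left-hand side of~\eqref{eq:relations} must then occur as a prefix of $w$ or of $v$, but this is precisely the statement that the monoid admits no relations beyond~\eqref{eq:relations}, and the first-letter/last-letter bookkeeping you describe does not establish it. Concretely, the common skeleton gives $w=\alpha x_1\cdots x_k\gamma\cdots$ and $v=\beta y_1\cdots y_k\delta\cdots$ with $x_i,y_i\in\{a,b\}$ and $\gamma,\delta\in\{\alpha,\beta\}$; for your step to go through one must exclude, for instance, $x_1=b$ occurring together with $y_1=a$, and nothing in the proposal does so. Note also that even where the step does apply it can require rewriting at interior positions first: $\varphi_{\alpha\alpha\beta}=\varphi_{\beta\alpha\alpha}$, yet $\alpha\alpha\beta$ has no prefix of the form $\alpha a^k\beta$; one is rescued here by the prefix $\beta\alpha$ of the other word, but that asymmetric escape has to be argued in general. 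The alternative route you sketch --- orienting~\eqref{eq:relations} into a confluent rewriting system and then checking that the normal form is determined by the pair $\bigl(\varphi_w(0),\varphi_w(1)\bigr)$ --- has the same hole, since that recovery statement is equivalent to the theorem. As written, your text is a sound reduction of the problem to the case analysis of~\cite{See91,ReKa07}, not a proof of it.
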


Note that the presentation of a generalization of the Sturmian monoid, the so-called \emph{episturmian monoid}, is also known, see~\cite{Ri03}.
The next lemma summarizes several simple and well-known  properties of Sturmian morphisms we exploit in the sequel.

\begin{lem}\label{lem:properties_of_sturm_morph}
	Let $w \in \{a,b,\alpha,\beta\}^+$.
	\begin{enumerate}[(i)]
		\item The morphism $\varphi_w$  is primitive if and only if  $w$ contains at least one Greek letter $\alpha$ or $\beta$ and at least one Latin letter $a$ or $b$.
		\item If $\varphi_w$ is primitive, then each of its fixed points is aperiodic and uniformly recurrent.
		\item If $\varphi_w$ is primitive, then it has two fixed points if and only if  $w$ belongs to  $\{a,\alpha\}^* $.
	\end{enumerate}
\end{lem}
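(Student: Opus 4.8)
The plan is to reduce all three claims to the incidence (abelianisation) matrices of the four generators, together with a short analysis of the first letter of the images $\varphi_w(0)$ and $\varphi_w(1)$. Write $L=\bigl(\begin{smallmatrix}1&1\\0&1\end{smallmatrix}\bigr)$ for the incidence matrix shared by $\varphi_a$ and $\varphi_b$, and $G=\bigl(\begin{smallmatrix}1&0\\1&1\end{smallmatrix}\bigr)$ for the one shared by $\varphi_\alpha$ and $\varphi_\beta$, so that the incidence matrix $M_w$ of $\varphi_w$ is the corresponding product of copies of $L$ and $G$; recall that $(M_w^{\,k})_{cd}$ counts the occurrences of $c$ in $\varphi_w^k(d)$, hence $\varphi_w$ is primitive iff some power of $M_w$ is entrywise positive. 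For (i): if $w$ has no Greek letter then $\varphi_w(0)=0$, so $1$ never occurs in any $\varphi_w^k(0)$ and $\varphi_w$ is not primitive; symmetrically, if $w$ has no Latin letter then $\varphi_w(1)=1$. Conversely, if $w$ contains letters of both kinds, then somewhere in the product $M_w$ a copy of $L$ sits immediately next to a copy of $G$, so $M_w=A\,P\,B$ with $P\in\{LG,GL\}$ and $A,B$ (possibly empty) products of $L$ and $G$. A direct computation gives $LG=\bigl(\begin{smallmatrix}2&1\\1&1\end{smallmatrix}\bigr)>0$ and $GL=\bigl(\begin{smallmatrix}1&1\\1&2\end{smallmatrix}\bigr)>0$; since $\det L=\det G=1$, every product of them has determinant $1$, hence is invertible and has neither a zero row nor a zero column, and multiplying an entrywise positive matrix on either side by such a matrix keeps it entrywise positive. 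Thus $M_w>0$ and $\varphi_w$ is primitive.

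For (ii): uniform recurrence of a fixed point of a primitive morphism is classical, and I would simply quote it (it already underlies the use of \cite{Durand98}; see also \cite{Fogg}). For aperiodicity, suppose a fixed point $\uu=\varphi_w(\uu)$ were eventually periodic, say with minimal periodic part $v^\omega$, $v$ nonempty. Applying $\varphi_w$ and matching the eventual periods forces $|v|$ to divide $|\varphi_w(v)|$, and comparing Parikh vectors gives $M_w\vec v=m\vec v$ with $m=|\varphi_w(v)|/|v|\in\N$ and $\vec v\ge 0$, $\vec v\neq 0$. By Perron--Frobenius applied to the primitive matrix $M_w$ (established in the proof of (i)), this forces $m$ to equal the Perron eigenvalue $\lambda$ of $M_w$. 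But $\det M_w=1$, and $\operatorname{tr}M_w\ge 3$: indeed $M_w>0$ gives $\operatorname{tr}M_w\ge 2$, while $\operatorname{tr}M_w=2$ together with $\det M_w=1$ would force a zero off the diagonal, contradicting $M_w>0$. Hence $\lambda$ is a root of the integer polynomial $x^2-(\operatorname{tr}M_w)x+1$ with $\operatorname{tr}M_w\ge 3$, whose only candidate rational roots $\pm1$ are not roots; so $\lambda\notin\Q$, contradicting $m\in\N$. Therefore $\uu$ is aperiodic.

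For (iii): for each generator consider the map on $\{0,1\}$ sending a letter $c$ to the first letter of its image; this map is the identity for $\varphi_a$ and $\varphi_\alpha$, the constant $0$ for $\varphi_b$, and the constant $1$ for $\varphi_\beta$. The first letter of $\varphi_w(c)$ is obtained by composing these maps along $w$ and evaluating at $c$, and a composite of maps each of which is the identity or a constant equals the identity precisely when every factor is the identity (and is never the transposition); in particular $\varphi_w$ is always prolongable on at least one letter, so it has a fixed point. Since $\varphi_w$ is primitive, it is prolongable on $c$ exactly when $\varphi_w(c)$ begins with $c$, and a binary morphism has two fixed points iff it is prolongable on both letters. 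Hence $\varphi_w$ has two fixed points iff the above composite is the identity iff every letter of $w$ lies in $\{a,\alpha\}$, i.e.\ $w\in\{a,\alpha\}^*$. This is consistent with \Cref{thm:relations}: both sides of each relation in \eqref{eq:relations} contain a letter $b$ or $\beta$, so no rewriting can be performed within, nor can produce, a word in $\{a,\alpha\}^*$.

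The only step requiring genuine care is the Perron--Frobenius argument in (ii): one must verify that the Parikh vector of the period is a bona fide nonnegative eigenvector of $M_w$ and invoke primitivity of $M_w$ to pin its eigenvalue to $\lambda$. Everything else is routine bookkeeping with $2\times 2$ matrices and with leftmost letters.
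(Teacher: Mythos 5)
Your proof is correct. Note that the paper offers no proof of this lemma at all --- it is stated as a summary of ``simple and well-known properties'' and used as a black box --- so your argument is strictly more than what the paper provides, and it is a sound, self-contained way to establish all three items. The incidence-matrix computation for (i) (any adjacency of a Latin and a Greek generator produces a positive $2\times 2$ block, and unimodular nonnegative factors preserve positivity) is exactly the standard route. For (ii), the aperiodicity argument is right, and you correctly identify the one delicate step: one must check that $\varphi_w(v)$ is a power of a conjugate of the minimal period $v$ (a factor of length $m|v|$ of a $|v|$-periodic tail), so that its Parikh vector really is $m\vec v$; the divisibility $|v| \mid |\varphi_w(v)|$ comes from Fine--Wilf applied to the common tail of $\uu$ and $\varphi_w(\uu)$. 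From there, irrationality of the Perron root of a positive unimodular integer matrix finishes it. For (iii), the reduction to the semigroup of ``first-letter'' maps (identity for $\varphi_a,\varphi_\alpha$, constants for $\varphi_b,\varphi_\beta$) is clean, and your remark that membership in $\{a,\alpha\}^*$ is invariant under the relations \eqref{eq:relations} is a worthwhile observation, since the criterion is phrased in terms of a name $w$ of the morphism rather than the morphism itself (it is also implied by your direct argument, which works letter by letter for an arbitrary name). No gaps.
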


For  $w \in  \{a,b,\alpha,\beta\}^*$ the rules~\eqref{eq:relations} preserve  positions in $w$ occupied  by Latin letters $ \{a,b\}$  and  positions occupied by Greek letters $ \{\alpha,\beta\}$.
We define that $a<b$ and $\alpha < \beta$ which allows the following definition.

\begin{defi} Let $w \in  \{a,b,\alpha,\beta\}^*$.
The lexicographically greatest word in $  \{a,b,\alpha,\beta\}^*$ which  can be obtained from $w$ by application of rewriting rules \eqref{eq:relations} is denoted $N(w)$.
If  $\psi = \varphi_w$,  then the word $N(w)$ is the \emph{normalized name} of the morphism $\psi$  and it is also denoted by $N(\psi) = N(w)$.
\end{defi}
The next lemma is a direct consequence of Theorem \ref{thm:relations}.
\begin{lem}\label{lem:normalized_words}
Let $w  \in \{a, b, \alpha, \beta\}^*$.
We have $w = N(w)$ if and only if $w$ does not contain $\alpha a^k\beta$ or $a\alpha^kb$ as a factor for any $k \in \mathbb{N}$.
In particular,  if $w \in \{ a, b, \alpha, \beta\}^* \setminus \{a, \alpha\}^*$, the  normalized name $N(w)$ has prefix either $a^i\beta$ or $\alpha^ib$ for some $i \in \mathbb{N}$.
\end{lem}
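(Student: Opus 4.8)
The plan is to treat the relations~\eqref{eq:relations} as a string-rewriting system and to identify $N(w)$ with its unique normal form. First I would \emph{orient} each relation in the direction that increases a word in the lexicographic order induced by $a<b,\ \alpha<\beta$, namely $\alpha a^k\beta\to\beta b^k\alpha$ and $a\alpha^k b\to b\beta^k a$; each such rewrite strictly raises the lexicographic value of a word (the first altered letter changes from $\alpha$ to $\beta$, respectively from $a$ to $b$) and preserves its length. By \Cref{thm:relations}, two words over $\{a,b,\alpha,\beta\}$ define the same morphism exactly when they are joined by a chain of instances of~\eqref{eq:relations}, so $N(w)$ is precisely the lexicographically greatest element of the equivalence class of $w$, and $w=N(w)$ means that $w$ is the lex-maximal representative of its class.

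With this in hand, one implication is immediate by contraposition: if $w$ contains a factor $\alpha a^k\beta$ or $a\alpha^k b$, rewriting that occurrence produces a word $w'$ with $\varphi_{w'}=\varphi_w$ and $w'>w$, so $w\ne N(w)$.

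For the converse I would use confluence. The oriented system terminates (each step strictly increases the length-preserving lexicographic value, and there are finitely many words of a given length), and its irreducible words are exactly those containing no factor $\alpha a^k\beta$ and no factor $a\alpha^k b$. Moreover, two occurrences of left-hand sides inside one word are always disjoint: one pattern starts with $\alpha$ and ends with $\beta$ while the other starts with $a$ and ends with $b$, so no occurrence can overlap or be nested in another, and two occurrences of the same pattern must coincide. Disjoint redexes make the system trivially locally confluent, hence confluent (Newman's lemma). Therefore every equivalence class contains exactly one irreducible word, and since every member of the class reduces to it by lex-increasing steps, that irreducible word is the lex-maximum, i.e.\ it equals $N$. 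In particular a word with no forbidden factor is irreducible and so equals its own normalized name. The one point needing genuine care is the disjointness of redexes, but the shape asymmetry of the two patterns makes it a routine check.

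For the closing clause, suppose $w\notin\{a,\alpha\}^*$. Both sides of both relations in~\eqref{eq:relations} contain a letter from $\{b,\beta\}$, so the entire equivalence class of $w$ stays outside $\{a,\alpha\}^*$; hence $N(w)\notin\{a,\alpha\}^*$, and since the rewrites preserve length, $N(w)\ne\varepsilon$. Write $N(w)=p\,c\,s$, where $c\in\{b,\beta\}$ is the first letter of $N(w)$ lying in $\{b,\beta\}$, so that $p\in\{a,\alpha\}^*$. If $p=\varepsilon$, the prefix of $N(w)$ is $\alpha^0 b$ or $a^0\beta$. If $p\ne\varepsilon$ and $c=b$, then by the first part $N(w)$, and hence $pc$, contains neither $ab$ nor any $a\alpha^j b$, which forces the last letter of $p$ to be $\alpha$ and $p$ to contain no $a$ at all, so $p=\alpha^i$; symmetrically, if $c=\beta$ then $p=a^i$. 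In either case $N(w)$ has the claimed prefix $a^i\beta$ or $\alpha^i b$.
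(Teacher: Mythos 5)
Your proof is correct, and it is considerably more explicit than what the paper records: the authors dispose of this lemma with the single remark that it is ``a direct consequence of \Cref{thm:relations}.'' The direction that genuinely needs an argument is the one you isolate -- that a word with no factor $\alpha a^k\beta$ or $a\alpha^k b$ is actually the lexicographic \emph{maximum} of its $\varphi$-equivalence class, not merely a local maximum from which no increasing rewrite departs. Your route (orient the relations~\eqref{eq:relations} increasingly, observe termination, check that any two redexes are either identical or disjoint because one pattern runs from $\alpha$ to $\beta$ and the other from $a$ to $b$, conclude local confluence and then confluence by Newman's lemma, hence a unique irreducible representative which must coincide with the lex-maximum) supplies exactly the content the paper leaves implicit; the redex-disjointness check you flag as the delicate point does go through. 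Two minor remarks: it is worth saying explicitly that ``lexicographically greatest'' is well defined even though $a<b$, $\alpha<\beta$ is only a partial order on the alphabet, because the rules preserve which positions carry Latin and which carry Greek letters, so all words in one class are pairwise comparable; and your argument for the final clause is sound -- each side of each relation contains a letter of $\{b,\beta\}$, so the class of a word outside $\{a,\alpha\}^*$ never meets $\{a,\alpha\}^*$, and the maximality of $N(w)$ forbids any $a$ before the first $b$ and any $\alpha$ before the first $\beta$, forcing the prefix $\alpha^i b$ or $a^i\beta$.
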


\begin{example} Since $\psi =  \varphi_a\varphi_b\varphi_\alpha\varphi_b =   \varphi_b\varphi_a\varphi_\alpha\varphi_b =   \varphi_b\varphi_b\varphi_\beta\varphi_a$, the normalized name  of $\psi$ is $N(\psi) = bb\beta a$.
\end{example}

The morphism  $E: 0\to 1, 1\to 0$ which exchanges letters in words over $\{0,1\}$  cannot change the factor complexity of an infinite word.
Thus, $E$ is clearly a Sturmian morphism.
But $E$ does not belong to the monoid  $\mathcal{M} = \langle  \varphi_a,\varphi_b,\varphi_\alpha,\varphi_\beta\rangle  $.
In fact, $E$  is the only missing morphism.
More precisely, any Sturmian morphism $\psi$ either belongs to $\mathcal{M}$ or  $\psi = \eta\circ E$, where $\eta \in \mathcal{M}$ (see~\cite{MiSe93}).
To generate the whole monoid of Sturmian morphisms $St$,  one needs only three morphisms, say  $E$, $\varphi_a$ and $\varphi_b$ (see~\cite{Lo83}).
We have
\begin{equation}\label{zamena}
\varphi_\alpha = E\varphi_aE  \quad \text{ and } \quad \varphi_\beta = E\varphi_bE .
\end{equation}

 Our aim is to study derivated words of fixed points of Sturmian morphisms.
 If $\uu$ is a fixed point of $\psi$, it is also a fixed point of $\psi^2$.
 Due to~\eqref{zamena},  the square $\psi^2$ always belongs to $\mathcal{M}$.
 To illustrate why this is true, assume, e.g., that $\psi \in St = \langle E, \varphi_a, \varphi_b \rangle$ equals $\psi = \varphi_a E \varphi_b \varphi_a$. 
 Using~\eqref{zamena} and the fact that $E^2$ is the identity morphism, we have
 \[
 	\psi = \varphi_a E \varphi_b E E \varphi_a E E = \varphi_a  \varphi_\beta \varphi_\alpha E
 \]
 and hence
 \[
 	\psi^2 = \varphi_a \varphi_\beta \varphi_\alpha E \varphi_a \varphi_\beta \varphi_\alpha E = \varphi_a \varphi_\beta \varphi_\alpha E \varphi_a E E\varphi_\beta E E\varphi_\alpha E = \varphi_a \varphi_\beta \varphi_\alpha \varphi_\alpha \varphi_b \varphi_a \in \mathcal{M}.
 \]
 Therefore we may restrict ourselves to  fixed points of morphisms from the  special Sturmian monoid $\mathcal{M}$.
 Note that this would not be true if we consider only the morphisms from $\langle \varphi_a, \varphi_b \rangle$, see also \Cref{lem:properties_of_sturm_morph}.

\begin{example}\label{fibonacci1} The Fibonacci word is the fixed point of the morphism $\tau: 0\to 01, 1\to 0$.
The morphism $\tau$ is Sturmian, but $\tau  \notin \mathcal{M}$.
We see that $\tau =  \varphi_b\circ E$ and by the relations  \eqref{zamena}  we have  $\tau^2 =\varphi_b\varphi_\beta $.
\end{example}

\begin{remark}\label{nerozlisuj} Two infinite words $\uu$ and $E(\uu)$ over the alphabet $\{0,1\}$ coincide  up to a permutation of the letters $0$ and $1$.
If a word $\uu$ is a fixed point of a morphism  $\varphi_w$, then $E(\uu)$ is a fixed point of the morphism  $E\circ \varphi_w\circ E = \varphi_v$ for some $v$.
By  \eqref{zamena}, the word  $v$ is obtained from $w$ by exchange of letters   $a\leftrightarrow \alpha$ and $b\leftrightarrow\beta$.
Therefore we introduce  the following morphism $F: \{a,b,\alpha,\beta\}^* \mapsto \{a,b,\alpha,\beta\}^* $ by 
\begin{equation}\label{eq:definition_of_F}
F(a)=\alpha, \quad F(\alpha)=a,\quad F(b)=\beta, \quad F(\beta)=b.
\end{equation}
This notation  enables  us to formulate two useful facts   on composition of  $E$ with  morphisms from $\mathcal{M}$. Namely,
\begin{equation}\label{skladani} E\circ \varphi_w\circ E = \varphi_{F(w)} \qquad \text{ and} \qquad (\varphi_w\circ E)^2 = \varphi_{wF(w)}\,.
\end{equation}

\end{remark}

Later on we will need the following statement on the morphism $F$.
First we recall two classical results on word equations:
\begin{lmm}[\cite{LynSch62}] \label{lem:Lyndon1}
	Let $y \in \A^*$ and $x,z \in \A^+$. Then $xy = yz$ if and only if there are $u, v \in \A^*$ and $\ell \in \N$ such that $x = uv, z = vu$ and $y = (uv)^\ell u$.
\end{lmm}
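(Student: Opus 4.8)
The plan is to prove the two implications of the equivalence separately. The implication $(\Leftarrow)$ is immediate: if $x = uv$, $z = vu$ and $y = (uv)^\ell u$, then $xy = uv(uv)^\ell u = (uv)^{\ell+1}u$ while $yz = (uv)^\ell u\, vu = (uv)^\ell(uv)u = (uv)^{\ell+1}u$, so $xy = yz$. For the implication $(\Rightarrow)$ I would proceed by induction on $|y|$. The starting observation is that comparing lengths in $xy = yz$ gives $|x| = |z|$; writing $w := xy = yz$, both $x$ and $y$ are prefixes of $w$, hence the shorter of the two is a prefix of the longer.

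I would then split according to whether $|x| \ge |y|$ or $|x| < |y|$. If $|x| \ge |y|$ (this covers in particular the base case $y = \varepsilon$), then $y$ is a prefix of $x$, so $x = yx'$ with $x' := y^{-1}x \in \A^*$; substituting this into $xy = yz$ and cancelling $y$ on the left yields $z = x'y$, and then $u := y$, $v := x'$, $\ell := 0$ satisfy $x = uv$, $z = vu$ and $(uv)^0 u = u = y$. If $|x| < |y|$, then $x$ is a prefix of $y$, so $y = xy'$ with $|y'| = |y| - |x| < |y|$; substituting into $xy = yz$ and cancelling $x$ on the left gives $xy' = y'z$, which is the same word equation with the same $x, z \in \A^+$ but a strictly shorter middle word. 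The induction hypothesis yields $u, v \in \A^*$ and $\ell \in \N$ with $x = uv$, $z = vu$, $y' = (uv)^\ell u$, whence $y = xy' = uv(uv)^\ell u = (uv)^{\ell+1}u$; thus $u$, $v$ and $\ell+1$ witness the claim for $y$.

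There is no real obstacle here: the reduction step leaves $x$ and $z$ untouched (in particular nonempty) and decreases $|y|$ by $|x| \ge 1$, so finitely many reductions reach the case $|x| \ge |y|$ and the induction terminates. The only points deserving a word of care are the two elementary facts used repeatedly --- that two words which are both prefixes of a common word are prefix-comparable, and that left cancellation is valid in the free monoid $\A^*$.
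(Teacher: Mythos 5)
Your argument is correct: the backward direction is a direct computation, and the forward direction by induction on $|y|$ — peeling off $y$ when $|x|\ge|y|$ and peeling off $x$ from $y$ otherwise — is the classical Lyndon--Sch\"utzenberger proof. Note that the paper does not prove this lemma at all; it is quoted from \cite{LynSch62} as a known result, so there is no in-paper proof to compare against, but your write-up matches the standard argument in the literature and has no gaps.
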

\begin{lmm}[\cite{LynSch62}] \label{lem:Lyndon2}
Let $x, y \in \A^+$. The following three conditions are equivalent:
\begin{enumerate}[(i)]
\item $xy  = yx$;
\item There exist integers $i, j > 0$ such that $x^i = y^j$;
\item There exist $z \in \A^+$ and integers $p, q > 0$ such that $x = z^p$ and $y = z^q$.
\end{enumerate}
\end{lmm}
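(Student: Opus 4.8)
The plan is to show all three conditions equivalent by proving the cycle $(iii)\Rightarrow(ii)\Rightarrow(i)\Rightarrow(iii)$, spending essentially no effort on the first two implications and concentrating on the third. The implication $(iii)\Rightarrow(ii)$ is a one-line substitution: from $x=z^p$ and $y=z^q$ one reads off $x^q=z^{pq}=y^p$, so $i=q$, $j=p$ work. For $(ii)\Rightarrow(i)$ I would argue with prefixes of powers. Assuming $x^i=y^j$ and using $i,j\ge 1$: the word $y$ is a prefix of $y^j=x^i$, hence also of $x^{2i-1}$ (note $2i-1\ge i$), so $xy$ is a prefix of $x^{2i}$; symmetrically $x$ is a prefix of $x^i=y^j$, hence of $y^{2j-1}$, so $yx$ is a prefix of $y^{2j}$. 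Since $x^{2i}=(x^i)^2=(y^j)^2=y^{2j}$, the words $xy$ and $yx$ are two prefixes of one and the same word and have equal length $|x|+|y|$; therefore $xy=yx$.

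The core of the argument is $(i)\Rightarrow(iii)$, which I would prove by induction on $|xy|$ (a word-theoretic Euclidean descent). If $|x|=|y|$, then comparing the first $|x|$ letters of $xy=yx$ forces $x=y$, and $z=x$, $p=q=1$ works. If $|x|\ne|y|$, say $|x|<|y|$ (the opposite case being symmetric), then reading the first $|x|$ letters of $xy=yx$ shows that $x$ is a prefix of $y$, so $y=xy'$ for some $y'$, and $y'\ne\varepsilon$ because $|x|<|y|$. Substituting into $xy=yx$ and cancelling one leading $x$ leaves $xy'=y'x$, which is a commuting pair with $x,y'\in\A^+$ and $|xy'|<|xy|$; the induction hypothesis then provides $z\in\A^+$ and $p,r\ge 1$ with $x=z^p$ and $y'=z^r$, whence $y=xy'=z^{p+r}$, and $z$, $p$, $q:=p+r$ finish this case. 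Closing the cycle gives all the equivalences.

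I do not expect a genuine obstacle here: this is the classical Lyndon–Schützenberger commutation theorem and the descent above is short. The only points that demand a little care are the base case of the induction, the cancellation bookkeeping (making sure $|x|<|y|$ really forces $y'\ne\varepsilon$), and the elementary length inequalities $|y|\le|x^{2i-1}|$ and $|x|\le|y^{2j-1}|$ used in $(ii)\Rightarrow(i)$, all of which hold because $i,j\ge 1$. As an alternative to the hands-on descent, one could instead derive $(i)\Rightarrow(iii)$ directly from \Cref{lem:Lyndon1}: the equation $xy=yx$ has the form $xy=yz$ with $z=x$, so the lemma furnishes $u,v$ with $x=uv=vu$ and $y=(uv)^\ell u$; this reduces to the strictly shorter commuting pair $uv=vu$, and the same induction then concludes.
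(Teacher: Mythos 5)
Your proof is correct. Note, however, that the paper does not prove this lemma at all: it is quoted from \cite{LynSch62} as a classical fact (the Lyndon--Schützenberger commutation theorem), so there is no in-paper argument to compare against. What you supply is the standard textbook proof: the cycle $(iii)\Rightarrow(ii)\Rightarrow(i)$ by direct substitution and a common-prefix argument, and the essential implication $(i)\Rightarrow(iii)$ by Euclidean descent on $|xy|$ (if $|x|<|y|$ then $x$ is a prefix of $y$, cancel, and induct). All the steps check out, including the length bookkeeping in $(ii)\Rightarrow(i)$ and the nonemptiness of $y'$ in the descent. Your alternative route via \Cref{lem:Lyndon1} also works, with the small caveat that in the conclusion $x=uv=vu$ one of $u,v$ may be empty, in which case condition $(iii)$ follows immediately rather than by a further induction step; you implicitly handle this by saying the pair is strictly shorter, but it is worth making the degenerate case explicit. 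In short: a correct, self-contained proof of a statement the paper merely cites.
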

With these two lemmas we prove the following result on word equations involving the morphism $F$.
Note that this result is within the general setting considered in~\cite{JoFlMaNo}, however we give an explicit solution of cases that we need later.
\begin{lmm}\label{lem:words_equations_F}
	Let $z$ and $p$ be nonempty words from $\{a,b,\alpha,\beta\}^+$.
	\begin{enumerate}[(i)]
		\item If $zp = F(p)F(z)$, then there is $x \in \{a,b,\alpha,\beta\}^+$ such that

 \centerline{$z = x\bigl(F(x)x\bigr)^i$ \ \  and \ \  $p = \bigl(F(x)x\bigr)^jF(x)$ \ \ \  for some $i,j \in \N$.}
\medskip

		\item If $zp = pF(z)$, then there is $x \in \{a,b,\alpha,\beta\}^+$ such that

 \centerline{ $z = \bigl(F(x)x\bigr)^i$ \ \ and \ \ $p =\bigl(F(x)x\bigr)^jF(x)$ \ \ \  for some $i,j \in \N$.}
	\end{enumerate}
\end{lmm}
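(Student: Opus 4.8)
The plan is to isolate a single rigidity statement and derive both parts from it. Call it the \emph{Core Claim}: if $w\in\{a,b,\alpha,\beta\}^+$ satisfies $F(w)={\rm cyc}^m(w)$ for some $m\in\N$, then the primitive root $t$ of $w$ (which exists and is unique by \Cref{lem:Lyndon2}) has even length $|t|=2\ell$, it factors as $t=x\,F(x)$ with $|x|=\ell$, and $m\equiv\ell\pmod{2\ell}$. Proving the Core Claim is the heart of the matter and, I expect, the main obstacle; the rest is bookkeeping. Note first that $F$ is the morphic extension of a fixed‑point‑free involution of the alphabet, so it preserves lengths, satisfies $F^2=\mathrm{id}$, commutes with ${\rm cyc}$, sends primitive words to primitive words (if $F(v)=u^k$ with $k\ge 2$ then $v=F(u)^k$ is not primitive), and has $F(v)\neq v$ for every nonempty $v$.

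To prove the Core Claim I would write $w=t^n$, set $d=|t|$ and $r=m\bmod d$, and use the easy identity ${\rm cyc}^m(t^n)=\bigl({\rm cyc}^r(t)\bigr)^n$. Then the hypothesis reads $F(t)^n=\bigl({\rm cyc}^r(t)\bigr)^n$, and comparing the first $d$ letters (both sides are length‑$d$ words raised to the $n$‑th power) gives $F(t)={\rm cyc}^r(t)$. Applying $F$, using that it commutes with ${\rm cyc}$ and is an involution, yields ${\rm cyc}^{2r}(t)=t$, hence $d\mid 2r$ by primitivity of $t$ (again via \Cref{lem:Lyndon2}), so $r=0$ or $r=d/2$. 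The case $r=0$ would force $F(t)=t$, impossible for the nonempty word $t$; so $d=2\ell$ is even, $r=\ell$, and $m\equiv\ell\pmod d$. Finally, splitting $t=t_1t_2$ into halves of length $\ell$, the equation $F(t_1)F(t_2)={\rm cyc}^\ell(t)=t_2t_1$ forces $F(t_1)=t_2$, so $t=t_1F(t_1)$; put $x:=t_1$.

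For part (i): applying $F$ to $zp=F(p)F(z)$ gives $F(zp)=pz={\rm cyc}^{|z|}(zp)$, so the Core Claim applies to $w=zp$ with $m=|z|$. Thus $zp=\bigl(x\,F(x)\bigr)^n$, and $|z|\equiv\ell\pmod{2\ell}$. Viewing $zp$ as $2n$ length‑$\ell$ blocks $x,F(x),x,F(x),\dots$, the congruence says $z$ consists of an odd number $2i+1$ of initial blocks, i.e. $z=x(F(x)x)^i$, and the remaining $2(n-i)-1$ blocks give $p=(F(x)x)^{j}F(x)$ with $j=n-i-1$; here $i,j\ge 0$ since $z$ and $p$ are nonempty proper parts of $zp$.

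For part (ii) I would reduce to part (i). Iterating $zp=pF(z)$ gives $z^np=pF(z)^n$ for all $n\ge 1$, so for $n$ with $n|z|\ge|p|$ the word $p$ is a prefix of $z^n$; hence $p=z^k\hat p$ with $k\ge 0$ and $\hat p$ a proper prefix of $z$. Substituting and cancelling $z^k$ gives $z\hat p=\hat pF(z)$. If $\hat p=\varepsilon$ then $z=F(z)$, impossible, so $\hat p$ is a nonempty proper prefix of $z$; writing $z=\hat p q$ with $q$ nonempty, substituting and cancelling $\hat p$ gives $q\hat p=F(\hat p)F(q)$ — exactly the hypothesis of part (i) for the pair $(q,\hat p)$. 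Part (i) yields $x$ with $q=x(F(x)x)^{i'}$ and $\hat p=(F(x)x)^{j'}F(x)$, whence $z=\hat p q=(F(x)x)^{i'+j'+1}$ and $p=z^k\hat p=(F(x)x)^{k(i'+j'+1)+j'}F(x)$, of the required form. (Alternatively one could start (ii) from \Cref{lem:Lyndon1}, which gives $z=uv$, $F(z)=vu=\,{\rm cyc}^{|v|}(z)$, $p=(uv)^\ell u$, and then apply the Core Claim to $w=z$ with $m=|v|$.)
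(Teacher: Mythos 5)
Your proof is correct, but it takes a genuinely different route from the paper's. The paper proves item $(i)$ by induction on $|zp|$: it uses \Cref{lem:Lyndon1} to turn $zp=F(p)F(z)$ into a strictly shorter instance $vu=F(u)F(v)$ of the same equation, and then obtains item $(ii)$ by one application of \Cref{lem:Lyndon1} followed by item $(i)$. You instead isolate a single structural statement --- if $F(w)={\rm cyc}^m(w)$ then the primitive root of $w$ has the form $xF(x)$ and $m$ is an odd multiple of $|x|$ --- proved via primitive roots, the identity ${\rm cyc}^m(t^n)=({\rm cyc}^r(t))^n$, and \Cref{lem:Lyndon2}; item $(i)$ then falls out by reading off block boundaries, and item $(ii)$ reduces to item $(i)$ by peeling copies of $z$ off $p$. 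Your approach buys a conceptual explanation (the solutions are exactly the conjugates-by-$F$ picture) and avoids the induction; the paper's is shorter to write and leans only on \Cref{lem:Lyndon1}. I verified the key steps of your Core Claim (the reduction $F(t)={\rm cyc}^r(t)$, the forced $r=d/2$, and the exclusion of $r=0$ since $F$ fixes no letter) and of both derivations; they are sound.

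One small slip, confined to your parenthetical alternative for $(ii)$: with $z=uv$ and $F(z)=vu$ one has $F(z)={\rm cyc}^{|u|}(z)$, not ${\rm cyc}^{|v|}(z)$, so the Core Claim would be applied with $m=|u|$ there. This does not affect your main argument for $(ii)$, which is the reduction via $p=z^k\hat p$.
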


\begin{proof}
	We prove Item $(i)$ by induction on $|zp| \geq 2$.
	If $|z| = |p|$, then $z = F(p)$ and the statement is true for $x = z$ and $i = j = 0$.

	Assume $|z| > |p|$ (the case of $|z| < |p|$ is analogous).
	There must be a nonempty word $q$ such that $z = F(p)q$ and this yields $qp = F(z) = pF(q)$.
	By \Cref{lem:Lyndon1} there are words $u$ and $v$ and $\ell \in \N$ such that $q = uv, p = (uv)^\ell u$ and $F(q) = vu$.
	This implies that $vu = F(u)F(v)$ and  we can apply the induction hypothesis as
	 $|uv| < |pz|$.
	 Therefore, there are $x$ and $s,r \in \N$ such that $v = x\bigl(F(x)x\bigr)^s$ and $u = \bigl(F(x)x\bigr)^tF(x)$.
	Putting this altogether we obtain
	\begin{align*}
		q & = uv =\bigl(F(x)x\bigr)^tF(x)x\bigl(F(x)x\bigr)^s = \bigl(F(x)x\bigr)^{t+s+1}, \\
		p & = (uv)^\ell u = \bigl(F(x)x\bigr)^jF(x), \quad \text{with} \ j={\ell(t+s+1) + t} \\
		z & = F(p)q =x \bigl(F(x)x\bigr)^i,  \quad \text{with} \ i=\ell(t+s+1) + 2t + s + 1.
	\end{align*}

To prove Item $(ii)$,  we apply  \Cref{lem:Lyndon1} on $zp = pF(z)$.
We have $z = uv, F(z) = vu$ and $p = (uv)^\ell u$ for some words $u$ and $v$ and $\ell \in \N$.
	It follows that $vu = F(u)F(v)$ and so, by Item $(i)$, there is $x$ such that $v = x\bigl(F(x)x\bigr)^i$ and $u = \bigl(F(x)x\bigr)^jF(x)$ for some $i,j \in \N$.
	Using all these equations we finish the proof by stating that
	\[
		z = uv = \bigl(F(x)x\bigr)^jF(x)x\bigl(F(x)x\bigr)^i = \bigl(F(x)x\bigr)^{j+i+1} \quad \text{and} \quad p = (uv)^\ell u = \bigl(F(x)x\bigr)^{\ell(j+i+1) + j}F(x). \qedhere
	\]
\end{proof}

%
%
\section{Derivated words of  Sturmian preimages} \label{sec:der_wo_St_preim}

In this section we study relations between derivated  words of a Sturmian word and derivated words of its preimage under one of the morphisms $\varphi_a, \varphi_b, \varphi_\alpha$  and $\varphi_\beta$.
We prove that the set of all derivated words of these two infinite words coincide up to at most one derivated word, see Theorems~\ref{thm:preimages_fi_b} and~\ref{thm:preimages_fi_a}.
This will be crucial fact for proving the main results of this paper. 
Because of~\eqref{zamena}, the roles of $\varphi_a$  and $\varphi_\alpha$  and, analogously, the roles of $\varphi_b$ and $\varphi_\beta$  are symmetric.
Therefore we can restrict the statements and proofs in this section to the morphisms $\varphi_a$ and $\varphi_b$ with no loss of generality.
Again we use results from~\cite{Lo83}, in particular this slightly modified Proposition 2.3.2:
\begin{prop}[\cite{Lo83}]
	Let $\bf x$ be an infinite word.
	\begin{enumerate}[(i)]
		\item If $\varphi_b(\bf x)$ is Sturmian, then $\bf x$ is Sturmian.
		\item If $\varphi_a(\bf x)$ is Sturmian  and $\bf x$ starts with the letter $1$, then $\bf x$ is Sturmian.
	\end{enumerate}
\end{prop}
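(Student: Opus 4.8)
I would prove both items by contradiction. Assume that $\varphi_b(\mathbf{x})$ (respectively $\varphi_a(\mathbf{x})$, with $\mathbf{x}$ starting with the letter $1$) is Sturmian, and suppose for contradiction that $\mathbf{x}$ is not Sturmian. Lying in the domain of these morphisms, $\mathbf{x}$ is binary; if $\mathbf{x}$ were eventually periodic its image would be eventually periodic too, hence not Sturmian, so $\mathbf{x}$ is aperiodic; and an aperiodic binary word which is not Sturmian is not balanced (see \cite{Lo83}). Thus $\mathbf{x}$ is aperiodic and unbalanced, and the classical description of unbalanced binary words (\cite{Lo83}) yields a word $w$ with $0w0 \in \mathcal{L}(\mathbf{x})$ and $1w1 \in \mathcal{L}(\mathbf{x})$. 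The plan is then to produce, inside the language of the Sturmian word $\varphi_b(\mathbf{x})$ (respectively $\varphi_a(\mathbf{x})$), two factors of equal length whose numbers of occurrences of the letter $1$ differ by $2$; since Sturmian words are balanced, this is the sought contradiction.

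For item $(i)$, set $\mathbf{y} = \varphi_b(\mathbf{x})$ and $g = \varphi_b(w)0$. From $\varphi_b(1) = 01$ we get $\varphi_b(1w1) = 01\,\varphi_b(w)\,01 \in \mathcal{L}(\mathbf{y})$, which has $1\,\varphi_b(w)\,01 = 1g1$ as a suffix, so $1g1 \in \mathcal{L}(\mathbf{y})$. For the other factor, the occurrence of $0w0$ in $\mathbf{x}$ is followed by a letter $d$ (the word $\mathbf{x}$ being infinite), so $0w0d \in \mathcal{L}(\mathbf{x})$ and hence $\varphi_b(0w0d) = 0\,\varphi_b(w)\,0\,\varphi_b(d) \in \mathcal{L}(\mathbf{y})$; since $\varphi_b(d)$ begins with $0$ (both $\varphi_b(0)$ and $\varphi_b(1)$ do), this word has $0\,\varphi_b(w)\,00 = 0g0$ as a prefix, so $0g0 \in \mathcal{L}(\mathbf{y})$. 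Now $|0g0| = |1g1|$ while $|1g1|_1 = |g|_1 + 2 = |0g0|_1 + 2$, contradicting the balance of $\mathbf{y}$. Item $(ii)$ is the mirror image: put $\mathbf{y} = \varphi_a(\mathbf{x})$ and $g = 0\,\varphi_a(w)$; from $\varphi_a(1) = 10$ we get $\varphi_a(1w1) = 10\,\varphi_a(w)\,10 \in \mathcal{L}(\mathbf{y})$, which has $10\,\varphi_a(w)\,1 = 1g1$ as a prefix; and because $\mathbf{x}$ begins with $1$, the factor $0w0$ does not occur at position $0$, so its occurrence is preceded by a letter $d$, whence $d0w0 \in \mathcal{L}(\mathbf{x})$ and $\varphi_a(d0w0) = \varphi_a(d)\,0\,\varphi_a(w)\,0 \in \mathcal{L}(\mathbf{y})$ has $00\,\varphi_a(w)\,0 = 0g0$ as a suffix (both $\varphi_a(0)$ and $\varphi_a(1)$ end with $0$). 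Again $0g0$ and $1g1$ violate balance.

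The one genuinely delicate point is the choice of $g$ and its built-in asymmetry, which is exactly what makes the hypothesis "$\mathbf{x}$ starts with $1$" necessary in $(ii)$ but superfluous in $(i)$. The naive guess $g = \varphi_b(w)$ (respectively $g = \varphi_a(w)$) is off by one letter, and the repair appends (respectively prepends) a single $0$ that must be borrowed from the letter of $\mathbf{x}$ adjacent to the occurrence of $0w0$: for $\varphi_b$, which places the new letter at the end of a block ($1 \mapsto 01$), the relevant neighbour is the letter \emph{after} $0w0$, which always exists in an infinite word; for $\varphi_a$, which places the new letter at the start of a block ($1 \mapsto 10$), one needs the letter \emph{before} $0w0$, and this is available precisely when $0w0$ is not a prefix of $\mathbf{x}$, i.e.\ when $\mathbf{x}$ begins with $1$. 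Beyond this bookkeeping, the proof uses only three standard facts recalled from \cite{Lo83}: Sturmian words are aperiodic and balanced; an aperiodic balanced binary word is Sturmian; and an unbalanced binary word contains $0w0$ and $1w1$ as factors for a common word $w$.
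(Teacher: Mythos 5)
Your proof is correct. The paper itself offers no proof of this proposition --- it is quoted (in slightly modified form) from Proposition 2.3.2 of \cite{Lo83} --- so there is nothing to compare against line by line; but your argument via the balance property (aperiodic $+$ balanced $\Leftrightarrow$ Sturmian, and unbalanced $\Rightarrow$ $0w0$ and $1w1$ both occur) is essentially the classical one found in that reference. The two constructions $0g0$ and $1g1$ check out, including the delicate point of where the extra letter $0$ is borrowed from: the letter following $0w0$ always exists in an infinite word (which is why item $(i)$ needs no hypothesis on the first letter of $\mathbf{x}$), whereas the letter preceding $0w0$ exists precisely because $\mathbf{x}$ starts with $1$, which is exactly the role of that hypothesis in item $(ii)$.
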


\begin{thm}\label{thm:preimages_fi_b}
	Let $\uu$ and $\uu'$ be Sturmian words such that $\uu = \varphi_{b}(\uu')$.
	Then the sets of their derivated words satisfy
	\[
		\Der(\uu) = \Der(\uu') \cup \{ \uu'\}\,.
	\]
\end{thm}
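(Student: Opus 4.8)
The plan is to set up an explicit correspondence between the prefixes of $\uu'$ and the prefixes of $\uu = \varphi_b(\uu')$, and then to match up return words on both sides. First I would recall that $\varphi_b$ is injective on $\{0,1\}^*$ and that, since $\varphi_b(0)=0$ and $\varphi_b(1)=01$, every factor of $\uu$ that begins at a ``$\varphi_b$-breakpoint'' (an index that is the image of a letter position in $\uu'$) has a unique $\varphi_b$-preimage; also $\varphi_b(\uu')$ begins at such a breakpoint because $\uu'_0 = 0$ forces $\uu = 0\cdots$ (note $\uu' = \varphi_b(\uu')$-preimage considerations, together with \Cref{lem:properties_of_sturm_morph}, make $\uu'$ uniformly recurrent, so all return sets are finite). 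The key structural fact I would isolate is: if $w'$ is a prefix of $\uu'$ and $w = \varphi_b(w')$, then the occurrences of $w$ in $\uu$ are exactly the $\varphi_b$-images of the occurrences of $w'$ in $\uu'$, with the single possible exception coming from the fact that $w$ may also occur at one extra spot if $w'$ ends in $1$ (since $\varphi_b(1)=01$ ends in $1$ and a fresh occurrence of $w$ could start one letter early). I expect this ``synchronization'' statement to be the technical heart; it is essentially the standard recognizability of $\varphi_b$, and I would either cite it or prove it directly by a short induction on $|w'|$ using that $1$ is always preceded by $0$ in $\mathcal{L}(\uu)$.

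Granting synchronization, the bijection of occurrences is length-multiplicative on return words: if $r'_0, r'_1$ are the two return words to $w'$ in $\uu'$, then $\varphi_b(r'_0), \varphi_b(r'_1)$ are the two return words to $w = \varphi_b(w')$ in $\uu$, and the coding sequence is unchanged, so $d_{\uu}(w) = d_{\uu'}(w')$. Since every prefix $w'$ of $\uu'$ gives a prefix $\varphi_b(w')$ of $\uu$, this shows $\Der(\uu') \subseteq \Der(\uu)$. The reverse-type inclusion requires going through arbitrary prefixes $w$ of $\uu$: writing $w = \varphi_b(w')p$ where $w'$ is the longest prefix of $\uu'$ with $\varphi_b(w')$ a prefix of $w$ and $p$ a proper prefix of $\varphi_b(\uu'_{|w'|})$, I would argue that $\mathcal{R}_\uu(w)$ and hence $d_\uu(w)$ depends only on $w'$ together with whether $p=\varepsilon$ or $p=0$ and whether the next letter is $0$ or $1$ — and in all cases $d_\uu(w)$ either equals some $d_{\uu'}(w')$ or equals the one exceptional word. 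I expect the exceptional word to arise precisely when $w$ is a prefix of $\uu$ that is a right special factor whose preimage analysis forces the two return words to $w$ to be $0$ and $01$ coded in the pattern of $\uu'$ itself — giving $d_\uu(w) = \uu'$ up to relabeling. Concretely, this happens for the prefix $w = 0$ of $\uu$ (assuming $\uu$ starts with $0$): the return words to $0$ in $\uu$ are $0$ and $01$, which appear in $\uu$ in exactly the order prescribed by $\uu'$, so $d_\uu(0) = \uu'$.

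So the structure of the write-up would be: (1) establish uniform recurrence and finiteness of return sets for $\uu'$; (2) prove/cite the synchronization lemma for $\varphi_b$; (3) deduce $d_\uu(\varphi_b(w')) = d_{\uu'}(w')$ for every prefix $w'$ of $\uu'$, giving $\Der(\uu') \subseteq \Der(\uu)$; (4) handle an arbitrary right special prefix $w$ of $\uu$ by decomposing it through $\varphi_b$, showing $d_\uu(w) \in \Der(\uu') \cup \{\uu'\}$, and in particular identifying the prefix $w=0$ (or the shortest right special prefix not of the form $\varphi_b(w')$) as the one producing $\uu'$; (5) conclude equality. The main obstacle, as noted, is step (2) and the careful bookkeeping in step (4): one must track the at-most-one extra occurrence of $w = \varphi_b(w')$ caused by $w'$ ending in $1$, and verify that it shifts return words only in a controlled way that does not change the coding — except in the single borderline case that yields the new derivated word $\uu'$. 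I would lean on \Cref{thm:derstst} to know a priori that every $d_\uu(w)$ is Sturmian (hence binary, hence its coding alphabet has size exactly two), which keeps the case analysis finite and short.
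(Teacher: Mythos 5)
Your overall architecture (reduce to right special prefixes, desubstitute through $\varphi_b$, identify $d_{\uu}(0)=\uu'$ as the one extra derivated word) is the same as the paper's, and your step for the prefix $w=0$ is exactly Item (i) of Proposition~\ref{prop:der_of_images_fi_b}. However, your central ``synchronization'' claim is stated incorrectly, and in a way that would derail steps (3) and (4). For $w=\varphi_b(w')$ the problematic case is when $w'$ ends in $0$, not in $1$: since $\varphi_b(0)=0$ and $\varphi_b(1)=01$ both begin with $0$, the trailing $0$ of $\varphi_b(w')$ may equally well be a complete block $\varphi_b(0)$ or the first letter of a block $\varphi_b(1)$. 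Writing $w'=u'0$, the occurrences of $\varphi_b(w')$ in $\uu$ are the images of the occurrences of $u'$ (followed by either letter) in $\uu'$, so there are in general infinitely many ``extra'' occurrences, not at most one, and $d_{\uu}(\varphi_b(w'))=d_{\uu'}(u')$ rather than $d_{\uu'}(w')$. (When $w'$ ends in $1$ the last block $01$ is forced, because every $1$ in $\uu$ is the second letter of an $01$-block, and the correspondence is exact --- the opposite of what you assert.) As stated, your step (3) therefore does not prove $\Der(\uu')\subseteq\Der(\uu)$ for right special prefixes $w'$ ending in $0$, and your step (4) leaves unproven precisely the claim that carries all the weight.

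The paper's way around this is to append a letter: it works with $w=\varphi_b(w')0$ instead of $\varphi_b(w')$. The extra $0$ certifies that the last block of $\varphi_b(w')$ is complete, which is the content of Lemma~\ref{lem:unique_preimage_under_fi_b}, and then occurrences of $w$ in $\uu$ biject with occurrences of $w'$ in $\uu'$ (Propositions~\ref{prop:der_of_images_fi_b} and~\ref{prop:der_of_preimage_fi_b}). This meshes with the observation that every right special factor of $\uu$ ends in $0$ (as $1$ is always followed by $0$), so every right special prefix of length greater than one is automatically of the form $\varphi_b(w')0$; and for the converse inclusion one simply takes $w=\varphi_b(w')0$ for a right special prefix $w'$ of $\uu'$. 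If you correct the direction of the exception and adopt the trailing-$0$ normalization, your outline becomes essentially the paper's proof; without it, the argument has a genuine gap.
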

The proof of the previous theorem is split into two parts:  In  Proposition~\ref{prop:der_of_images_fi_b},   Item (i)  says  $ \{ \uu'\} \subset \Der(\uu)$ and  Item (ii)  says $ \Der(\uu)  \subset \Der(\uu') \cup \{ \uu'\}.$  Proposition~\ref{prop:der_of_preimage_fi_b} says $\Der(\uu')  \subset \Der(\uu) $.
Proofs of these propositions use the following simple property of the injective morphism $\varphi_{b}$.
\begin{lem}\label{lem:unique_preimage_under_fi_b}
	Let $\uu = \varphi_{b}(\uu')$ be a Sturmian word.
	If $p0 \in \mathcal{L}(\uu)$  and $0$ is a prefix of $p$, then there exists a unique factor $p'\in \mathcal{L}(\uu')$  such that $p0= \varphi_b(p')0$.
\end{lem}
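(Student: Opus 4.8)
The plan is to exploit the fact that $\varphi_b$ is injective and that its image language has a very rigid block structure: since $\varphi_b(0) = 0$ and $\varphi_b(1) = 01$, every occurrence of the letter $1$ in $\uu = \varphi_b(\uu')$ is immediately preceded by a $0$, and conversely the word $\uu$ decomposes uniquely into blocks $0$ and $01$. Concretely, I would first record the \emph{synchronization} property: if $q$ is a factor of $\uu$ that both begins with $0$ and ends with $0$, then $q$ has a unique preimage, i.e.\ there is a unique $q' \in \mathcal{L}(\uu')$ with $q = \varphi_b(q')$. This follows because reading $q$ from left to right, each $0$ that is not followed by a $1$ must be the image of a letter $0$, and each maximal block $01$ must be the image of a letter $1$; the hypothesis that $q$ starts with $0$ prevents $q$ from beginning in the ``middle'' of a block $01$, and the hypothesis that $q$ ends with $0$ prevents it from ending with an incomplete block. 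Hence the block decomposition of $q$ is forced, and uniqueness of the preimage is immediate from injectivity of $\varphi_b$.

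Now apply this to $q = p0$. By hypothesis $0$ is a prefix of $p$, so $p0$ begins with $0$; and it ends with $0$ by construction. Moreover $p0 \in \mathcal{L}(\uu)$. Therefore the synchronization property yields a unique $p' \in \mathcal{L}(\uu')$ with $p0 = \varphi_b(p')$. It remains to rewrite this in the asserted form $p0 = \varphi_b(p')0$: since $p0$ ends in $0$ and the only letter whose $\varphi_b$-image ends in $0$ is $0$ itself (as $\varphi_b(1) = 01$ ends in $1$), the last letter of $p'$ must be $0$; writing $p' = p''0$ we get $\varphi_b(p') = \varphi_b(p'')0$, and the factor of $\mathcal{L}(\uu')$ we want is again $p'$ itself. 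In fact the statement as written is already satisfied by taking the unique $p'$ from the previous step and noting $\varphi_b(p')0$ makes sense once we absorb the trailing $0$; I would just phrase the conclusion as: the unique $p' \in \mathcal{L}(\uu')$ with $\varphi_b(p')0 = p0$ is obtained by stripping the final $0$ from the forced block decomposition of $p0$, and uniqueness is inherited from injectivity of $\varphi_b$.

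The only genuinely delicate point — and the one I would take care to state cleanly — is the synchronization claim for words that start and end with $0$. Everything else (injectivity of $\varphi_b$, the shape of the images of $0$ and $1$) is immediate. One should be a little careful that $p$ itself need not end in $0$, which is exactly why the lemma pads with a trailing $0$: without it, $p$ could end in the interior of a block $01$ and the preimage decomposition would fail to be determined. I would emphasize this in one sentence so the reader sees why the hypothesis ``$0$ is a prefix of $p$'' together with the padded $0$ are precisely the two synchronization conditions needed.
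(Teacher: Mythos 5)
The paper states this lemma without proof (it is offered as a ``simple property''), so there is no official argument to compare against; judged on its own, your proposal has a genuine gap in its central ``synchronization property''. You claim that every factor $q$ of $\uu=\varphi_b(\uu')$ which begins and ends with $0$ admits a preimage $q'\in\mathcal{L}(\uu')$ with $\varphi_b(q')=q$, on the grounds that a trailing $0$ ``prevents $q$ from ending with an incomplete block''. It does not: a final $0$ may perfectly well be the first letter of an incomplete block $01$. Concretely, take $\uu'$ Sturmian with $00\notin\mathcal{L}(\uu')$ (slope greater than $1/2$). Then $00\in\mathcal{L}(\uu)$ because $01\in\mathcal{L}(\uu')$ and $\varphi_b(01)=001$; the word $00$ begins and ends with $0$; yet the only word $q'$ with $\varphi_b(q')=00$ is $q'=00$, which is not a factor of $\uu'$. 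So when you apply the property to $q=p0$ and assert the existence of $q'\in\mathcal{L}(\uu')$ with $\varphi_b(q')=p0$ --- which, after you identify its last letter, amounts to asserting $p'0\in\mathcal{L}(\uu')$ --- you are claiming something that can fail, exactly as in this example with $p=0$: the lemma's conclusion $p'=0\in\mathcal{L}(\uu')$ is true, but your intermediate claim $00\in\mathcal{L}(\uu')$ is not.

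The repair is to use the padding $0$ differently: it should not be decoded as a block at all. Since $\varphi_b(0)=0$ and $\varphi_b(1)=01$ both begin with $0$ and the letter $1$ occurs only as the second letter of a block, every occurrence of the letter $0$ in $\uu$ is the first position of a block. Fix an occurrence of $p0$ in $\uu$. The first letter of $p$ is $0$, so $p$ starts at a block boundary; the letter immediately following $p$ at this occurrence is $0$, so the position just past $p$ is also a block boundary. Hence this occurrence of $p$ is a concatenation of whole blocks, i.e.\ $p=\varphi_b(p')$ where $p'$ is the factor of $\uu'$ whose letters generate those blocks --- note that only $p$, not $p0$, gets decoded, so one never needs $p'0$ to be a factor of $\uu'$. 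Uniqueness of $p'$ then follows from injectivity of $\varphi_b$ (equivalently, from the fact that $\{0,01\}$ is a code), as you say. Your closing remark about why the trailing $0$ is needed is on the right track, but its correct role is to certify the block boundary immediately after $p$, not to be absorbed as the image of a letter $0$.
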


\begin{prop} \label{prop:der_of_images_fi_b}
	Let $\uu$ and $\uu'$ be Sturmian words such that $\uu = \varphi_{b}(\uu')$ and let $w$ be a prefix of $\uu$.
	\begin{enumerate}[(i)]
		\item If $|w|=1$, then  $d_{\uu}(w) = \uu'$ (up to a permutation of letters). \label{it:der_of_images_fi_b_1}
		\item If $|w|>1$, then there exists a prefix $w'$ of $\uu'$ such that $|w'|<|w|$ and  $d_{\uu}(w) = d_{\uu'}(w')$ (up to a permutation of letters). \label{it:der_of_images_fi_b_2}
		Moreover, if $w$ is right special, $w'$ is right special as well.
	\end{enumerate}
\end{prop}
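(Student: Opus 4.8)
The plan is to analyze the structure of $\varphi_b$ carefully, using that $\varphi_b(0)=0$ and $\varphi_b(1)=01$, so that every image word $\varphi_b(v)$ has the property that a $0$ precedes every $1$ and the word ends in either $0$ (if $v$ ends in $0$) or $1$ (if $v$ ends in $1$). For Item \eqref{it:der_of_images_fi_b_1}, take $w$ to be the first letter of $\uu$. Since $\uu = \varphi_b(\uu')$ starts with $0$ (as both $\varphi_b(0)$ and $\varphi_b(1)$ start with $0$), we have $w=0$. The occurrences of $0$ in $\uu$ are exactly the positions where an image block $\varphi_b(u'_i)$ begins: indeed $\varphi_b(0)=0$ contributes one $0$ at the start of its block, and $\varphi_b(1)=01$ contributes a single $0$ at the start of its block and no other $0$. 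Hence consecutive occurrences of $0$ in $\uu$ delimit exactly the blocks $\varphi_b(u'_i)$, so the return word to $0$ following the block coding $u'_i = 0$ is $\varphi_b(0) = 0$, and following $u'_i = 1$ is $\varphi_b(1) = 01$. The two return words to $0$ are therefore $r_0 = 0$ and $r_1 = 01$, and the factorization $\uu = r_{s_0} r_{s_1} r_{s_2}\cdots$ has $s_i = u'_i$ for all $i$. Thus $d_{\uu}(0) = \uu'$ up to the permutation of letters identifying the coding alphabet with $\{0,1\}$.

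For Item \eqref{it:der_of_images_fi_b_2}, let $w$ be a prefix of $\uu$ with $|w|>1$. Since $\uu$ starts with $0$ and is aperiodic, $w$ has a prefix $0$, and we may also assume (replacing $w$ by a longer prefix if necessary, which does not change the derivated word as noted in \Cref{SecDerivatedWords}, or simply observing the relevant right-special prefix) that $w$ ends in $0$ — concretely, every right-special prefix of $\uu$ is of the form $\varphi_b(p')0$ for a suitable $p'$, because right-special factors of $\uu$ arise from right-special factors of $\uu'$ through $\varphi_b$. By \Cref{lem:unique_preimage_under_fi_b}, write $w = \varphi_b(w')0$ with $w'$ a prefix of $\uu'$; note $|w'| < |w|$ since $|\varphi_b(w')| \geq |w'|$ and the extra final $0$ strictly increases the length. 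The key claim is that occurrences of $w$ in $\uu$ are in bijection with occurrences of $w'$ in $\uu'$, the occurrence of $w$ at block position $i$ in $\uu$ corresponding to the occurrence of $w'$ at position $i$ in $\uu'$; this follows from the injectivity of $\varphi_b$ on factors together with the "synchronization" provided by \Cref{lem:unique_preimage_under_fi_b} — the factor $w = \varphi_b(w')0$ can only start at a block boundary of $\uu$, because it starts with $0$ and a $0$ in $\uu$ always begins a block. Consequently, if $i<j$ are consecutive occurrences of $w'$ in $\uu'$ with $\uu'_{[i,j)} = r'_{s}$ one of the two return words to $w'$, then the corresponding consecutive occurrences of $w$ in $\uu$ cut out exactly $\varphi_b(r'_s)$ (here one uses that the terminal $0$ of $w$ lies inside $\varphi_b$ of the first letter of the next block, so it does not disturb the block decomposition). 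Therefore the two return words to $w$ in $\uu$ are $\varphi_b(r'_0)$ and $\varphi_b(r'_1)$, and the coding sequences coincide: $d_{\uu}(w) = d_{\uu'}(w')$ up to permutation of letters. The "moreover" clause is immediate from the bijection of occurrences: $w$ extends by two distinct letters in $\uu$ iff the corresponding block extends in two ways iff $w'$ extends by two distinct letters in $\uu'$, so $w$ right-special forces $w'$ right-special.

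The main obstacle is the synchronization/bijection-of-occurrences argument: one must verify that no occurrence of $w = \varphi_b(w')0$ in $\uu$ can start in the interior of a block $\varphi_b(u'_i)$ and that the terminal $0$ never creates a spurious extra occurrence or spoils the block decomposition of the gap between two occurrences. Both points rest on the simple combinatorial fact that in $\uu$ the letter $0$ marks precisely the block boundaries (since $\varphi_b(1)=01$ has its unique $0$ at the block start and $\varphi_b(0)=0$ is a single $0$ at the block start), which is exactly the content of \Cref{lem:unique_preimage_under_fi_b}; I would state and use this synchronization as the central lemma of the proof. Once synchronization is in hand, everything else is a direct translation between the two words via $\varphi_b$.
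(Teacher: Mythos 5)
Your proposal is correct and follows essentially the same route as the paper: item (i) is identical, and for item (ii) you invoke \Cref{lem:unique_preimage_under_fi_b} to write $w=\varphi_b(w')0$ and use the same synchronization fact (every $0$ in $\uu$ marks a block boundary) to match occurrences and return words of $w$ and $w'$. The only cosmetic difference is that you push the return words of $w'$ forward from $\uu'$ to $\uu$ (as the paper does in its companion \Cref{prop:der_of_preimage_fi_b}), whereas the paper's proof of this proposition pulls the return words of $w$ back to $\uu'$; both rest on the same two-way occurrence correspondence.
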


\begin{proof}
	Since $\varphi_b(0) =0$ and $\varphi_b(1)=01$, the word $\uu = \varphi_{b}(\uu')$ has a prefix $0$ and the letter $1$ is in $\uu$ separated by blocks $0^k$ with $k\geq 1$.
	Therefore, the two  return words in $\uu$  to the word $w=0$  are $r_0=0$ and $r_1=01$.
	We may write $\uu = r_{s_0} r_{s_1}  r_{s_2}  \cdots$, where $r_{s_j} \in \{r_0,r_1\}$ and thus $ d_{\uu}(w) = s_0s_1s_2\cdots$.
	Since $r_0=\varphi_b(0)$ and $r_1=\varphi_b(1)$, we obtain also $\varphi_b(\uu') = \uu = \varphi_b(s_0)  \varphi_b(s_1) \varphi_b(s_2)  \cdots = \varphi_b(s_0s_1s_2\cdots)$.
	The statement in~\ref{it:der_of_images_fi_b_1} now follows from injectivity of $\varphi_b$.

	Now suppose that the prefix $w$ of $\uu$ is of length $>1$.
	As explained earlier, it suffices to consider right special prefixes.
	Since the letter $1$ is always followed by $0$, each right special factor must end in~$0$.
	So the first and the last letter of~$w$ is~$0$, hence by Lemma~\ref{lem:unique_preimage_under_fi_b} there is a unique prefix $w'$ of $\uu'$ such that $\varphi_b(w')0 = w$.
	Let $r_0$ and $r_1$ be the two return words to $w$ and let $\uu = r_{s_0}r_{s_1}r_{s_2}\cdots$.
	Since the first letter of both $r_0$ and $r_1$ is~$0$, there are uniquely given $r'_0$ and $r'_1$ such that  $r_0=\varphi_b(r'_0)$ and $r_1=\varphi_b(r'_1)$ and $\uu' = r'_{s_0}r'_{s_1}r'_{s_2}\cdots$.

	Clearly $w'$ is a prefix of $r'_{s_j}r'_{s_{j+1}}r'_{s_{j+2}}\cdots$ for all $j \in \N$ and so the number $|r'_{s_0}r'_{s_1}\cdots r'_{s_k}|$ is an occurrence of $w'$ in $\uu'$ for all $k \in \N$.
	Let $i > 0$ be an occurrence of $w'$ in $\uu'$.
	It follows that $\varphi_b\bigl(\uu'_{[0,i)}\bigr)w$  is a prefix of~$\uu$ and $|\varphi_b\bigl(\uu'_{[0,i)}\bigr)|$ is an occurrence of $w$ in $\uu$.
	There must be $j \in \N$ such that $\varphi_b\bigl(\uu'_{[0,i)}\bigr) = r_{s_0}r_{s_1}\cdots r_{s_j}$ and hence, by injectivity of $\varphi_b$, $\uu'_{[0,i)} = r'_{s_0}r'_{s_1}\cdots r'_{s_j}$ and $i = |r'_{s_0}r'_{s_1}\cdots r'_{s_j}|$.

	We have proved that the numbers $0$ and $|r'_{s_0}r'_{s_1}\cdots r'_{s_j}|, j = 0,1,\ldots$, are all occurrences of $w'$ in $\uu'$.
	It follows that $r'_0$ and $r'_1$ are the two return words to $w'$ in $\uu'$ and
	\[
		d_{\uu'}(w') = s_0s_1s_2 \cdots =  d_{\uu}(w).
	\]
 
	Since $w = \varphi_b(w')0$ is a right special factor, we must have that both $\varphi_b(w')00$ and $\varphi_b(w')01$ are factors of $\uu$.
	It follows that both $w'0$ and $w'1$ are factors of $\uu'$ and $w'$ is right special. 
\end{proof}

\begin{prop}\label{prop:der_of_preimage_fi_b}
	Let $\uu$  and $\uu'$ be Sturmian words such that $\uu = \varphi_{b}(\uu')$ and let $w'$ be a nonempty right special prefix of $\uu'$.
	Then $d_{\uu'}(w') = d_{\uu}(w)$, where $w =  \varphi_{b}(w')0$.
\end{prop}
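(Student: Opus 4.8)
The plan is to mirror the argument used in the proof of \Cref{prop:der_of_images_fi_b}, but now starting from $w'$ instead of from $w$. First I would record the elementary facts. Since $w'$ is a nonempty prefix of $\uu'$, the word $\uu'$ begins with $w'$ followed by some letter $y$; as $\varphi_b(y)$ begins with $0$, the word $\uu=\varphi_b(\uu')$ begins with $\varphi_b(w')0=w$, and $|w|\ge 2$. By Vuillon's theorem \cite{Vu}, $w'$ has exactly two return words $r'_0\ne r'_1$ in $\uu'$; let $\uu'=r'_{s_0}r'_{s_1}r'_{s_2}\cdots$ be the associated return-word decomposition, so that $d_{\uu'}(w')=s_0s_1s_2\cdots$ and the numbers $i_k:=|r'_{s_0}\cdots r'_{s_{k-1}}|$ (for $k\ge 0$) are precisely the occurrences of $w'$ in $\uu'$. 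Setting $r_i:=\varphi_b(r'_i)$ and applying $\varphi_b$, we get $\uu=r_{s_0}r_{s_1}r_{s_2}\cdots$ with $r_0\ne r_1$ by injectivity of $\varphi_b$. The statement will then follow once I show that the set of occurrences of $w$ in $\uu$ equals $\{\,b_k:=|r_{s_0}\cdots r_{s_{k-1}}|=|\varphi_b(\uu'_{[0,i_k)})| : k\ge 0\,\}$: granted this, $r_0,r_1$ are the two return words of $w$ and the displayed factorization is its return-word decomposition, whence $d_{\uu}(w)=s_0s_1s_2\cdots=d_{\uu'}(w')$.

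One inclusion is immediate: at the occurrence $i_k$ the word $\uu'$ continues with $w'$ and then some letter, so $\uu$ continues at $b_k$ with $\varphi_b(w')$ followed by an image of a letter, i.e. with $\varphi_b(w')0=w$; hence each $b_k$ is an occurrence of $w$. The substantial part is the converse, and here I would use the (elementary) recognizability of $\varphi_b$. Writing $\uu'=x_0x_1x_2\cdots$, the word $\uu$ decomposes into blocks $\varphi_b(x_0)\varphi_b(x_1)\varphi_b(x_2)\cdots$, each block being $0$ or $01$; since every block begins with $0$ and the only position interior to a block is the ``$1$'' of a block $01$, the block boundaries of $\uu$ are exactly the positions carrying the letter $0$. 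Now, given an arbitrary occurrence $j$ of $w$ in $\uu$: because $w=\varphi_b(w')0$ with $w'$ nonempty, the word $w$ begins and ends with the letter $0$, so both $j$ and $j+|w|-1$ are block boundaries, and therefore the infix $\uu_{[j,\,j+|w|-1)}=\varphi_b(w')$ is a concatenation of complete blocks, say $\varphi_b(w')=\varphi_b(x_l)\cdots\varphi_b(x_{l+t-1})=\varphi_b\bigl(\uu'_{[l,l+t)}\bigr)$. Injectivity of $\varphi_b$ gives $w'=\uu'_{[l,l+t)}$, so $l$ is an occurrence of $w'$ in $\uu'$, i.e. $l=i_k$ for some $k$; then $j=|\varphi_b(\uu'_{[0,l)})|=b_k$, which is exactly what is needed.

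I expect the converse inclusion to be the only real obstacle, and within it the delicate point is that the trailing $0$ of $w=\varphi_b(w')0$ is precisely what forces the right end of an occurrence of $w$ to fall on a block boundary; $\varphi_b(w')$ alone need not do so, for instance when $w'$ ends in $1$. An alternative route to the converse inclusion is to invoke \Cref{lem:unique_preimage_under_fi_b}, but the explicit block-boundary picture makes the recognizability transparent and keeps the bookkeeping short. (Note that right-specialness of $w'$ is not actually used in this argument; it is simply the case of interest in view of the description of $\Der$ for aperiodic uniformly recurrent words.)
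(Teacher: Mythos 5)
Your proof is correct and follows essentially the same route as the paper: apply $\varphi_b$ to the return-word decomposition of $\uu'$, observe the easy inclusion that each $b_k$ is an occurrence of $w$, and then show conversely that every occurrence of $w$ arises this way, so that $r_0,r_1$ are the return words of $w$ and the coding sequences agree. The only difference is that the paper handles the converse inclusion by citing \Cref{lem:unique_preimage_under_fi_b}, whereas you unfold that lemma into the explicit block-boundary (recognizability) argument — which you yourself note is interchangeable with the lemma — and your observation that right-specialness of $w'$ is not used is also accurate.
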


\begin{proof}
	Let $r_0'$ and $r_1'$ be the two return words to $w'$ in $\uu'$ and $d_{\uu'}(w') = s_0s_1s_2\cdots$.
	Put $w=\varphi_b(w')0$,   $r_0=\varphi_b(r'_0)$ and $r_1=\varphi_b(r'_1)$.
	We obtain
	\[
		\uu = \varphi_b(\uu') = \varphi_b(r'_{s_0}r'_{s_1}r'_{s_2}\cdots ) = r_{s_0}r_{s_1}r_{s_2}\cdots
	\]
	Clearly,  $w$ is prefix of $r_{s_k}r_{s_{k+1}}r_{s_{k+2}}\cdots$ for all $k \in \N$ and $|r_{s_0}r_{s_1}\cdots r_{s_j}|$ is an occurrence of $w$ in $\uu$ for all $j \in \N$.

	Assume now $i > 0$ is an occurrence of $w$ in $\uu$.
	This means that $\uu_{[0,i)}w$ is a prefix of $\uu$ and hence, by Lemma~\ref{lem:unique_preimage_under_fi_b} (note that $w$ begins with $0$), there must be $p'$ a prefix of $\uu'$ such that $\varphi_b(p') = \uu_{[0,i)}$ and $p'w'$ is a prefix of $\uu'$.
	Since $|p'|$ is an occurrence of $w'$ in $\uu'$, there is $j \in \N$ such that $p' = r'_{s_0}r'_{s_1}\cdots r'_{s_j}$.
	It follows that
	\[
		\uu_{[0,i)} = \varphi_b(r'_{s_0}r'_{s_1}\cdots r'_{s_j}) = r_{s_0}r_{s_1}\cdots r_{s_j}
	\]
	and $i = |r_{s_0}r_{s_1}\cdots r_{s_j}|$.

	So, again as in the previous proof, we have shown that the numbers $0$ and $|r_{s_0}r_{s_1}\cdots r_{s_j}|, j = 0,1,\ldots$, are all occurrences of $w$ in $\uu$.
	It follows that $r_0$ and $r_1$ are the two return words to $w$ in $\uu$ and
	\[
		d_{\uu}(w) = s_0s_1s_2 \cdots =  d_{\uu'}(w'). \qedhere
	\]
\end{proof}

\begin{thm}\label{thm:preimages_fi_a}
Let $\uu$ and $\uu'$ be Sturmian words such that $\uu$ starts with the letter $1$ and $\uu = \varphi_{a}(\uu')$.
Then $\uu'$ starts with $1$ and the sets of their derivated words coincide, i.e.,
\[
	\Der(\uu) = \Der(\uu')\,.
\]
In particular, for any prefix $w$ of $\uu$ there exists a prefix $w'$ of $\uu'$ such that $|w'| \leq |w|$ and  $d_{\uu}(w) = d_{\uu'}(w')$ (up to a permutation of letters).
Moreover, if $w$ is right special, $w'$ is right special as well.
\end{thm}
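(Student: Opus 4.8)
The plan is to mimic the structure of the proof of \Cref{thm:preimages_fi_b}, but to exploit the crucial difference that $\varphi_a$ maps $1 \to 10$ and $0 \to 0$, so that (when $\uu$ starts with $1$) the word $\uu$ is a concatenation of the blocks $\varphi_a(1) = 10$ and $\varphi_a(0) = 0$, and every occurrence of the letter $1$ in $\uu$ marks the \emph{start} of a block. First I would verify the easy preliminary claims: since $\uu = \varphi_a(\uu')$ begins with $1$ and $\varphi_a(0) = 0$ begins with $0$, the first letter of $\uu'$ must be $1$; and the preceding Proposition of \cite{Lo83}, Item (ii), guarantees $\uu'$ is Sturmian. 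The key combinatorial fact, playing the role of \Cref{lem:unique_preimage_under_fi_b}, is that for any factor $p$ of $\uu$ that begins and ends with the letter $1$, there is a unique factor $p'$ of $\uu'$ with $\varphi_a(p') = p$: indeed, reading $p$ from left to right, each $1$ opens a block $10$ or stays as the start of such a block, and the desynchronization cannot occur because a $1$ always immediately follows the image boundary. I would isolate this as a short lemma (or fold it into the proof).

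Next, the analogue of \Cref{prop:der_of_images_fi_b}: given a prefix $w$ of $\uu$, I reduce as usual to the case $w$ right special. Since in $\uu$ the letter $1$ is always followed by $0$ — no wait, here the relevant observation is reversed: in $\uu$ the letter $1$ is \emph{preceded} by $0$ whenever it is not word-initial, but more to the point every right special factor of $\uu$ ends in $0$ (as $1$ is always followed by $0$). Hmm — since $w$ is a prefix of $\uu$ it starts with $1$, and being right special it ends with $0$. The correct normalization is to look at $1w$: actually the cleaner route is to note that a right special prefix $w$ of $\uu$ starts with $1$ and ends with $0$, so $1w1$ is not a factor issue; instead I consider the word $w$ together with a leading $1$ already present. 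Concretely: since $w$ starts with $1$ and (being right special, with $1$ always followed by $0$) ends with $0$, apply the unique-preimage lemma to $w1$ — more precisely, since $w1$ and $w0$ are both factors and $w0$ begins and ends with $1$ after trimming is not quite it. The clean statement is: there is a unique prefix $w'$ of $\uu'$, with $|w'| \le |w|$, such that $\varphi_a(w')$ is either $w$ or $w$ with a trailing $0$ removed, and in either case the occurrences of $w$ in $\uu$ correspond bijectively, via $\varphi_a$, to the occurrences of $w'$ in $\uu'$; hence the return words match and $d_{\uu}(w) = d_{\uu'}(w')$. Right-specialness of $w'$ follows because $w'0, w'1 \in \mathcal L(\uu')$ translate to $\varphi_a(w')$ being extendable by both letters in $\uu$.

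For the reverse inclusion $\Der(\uu') \subseteq \Der(\uu)$, the analogue of \Cref{prop:der_of_preimage_fi_b}: given a nonempty right special prefix $w'$ of $\uu'$, every right special factor of $\uu'$ ends in $0$ as well (Sturmian, letter $1$ followed by $0$), and $w'$ starts with $1$; set $w = \varphi_a(w')$, which starts with $1$ and ends with $0$, hence begins and ends so that the unique-preimage lemma applies to occurrences of $w$. Writing $\uu' = r'_{s_0}r'_{s_1}\cdots$ with $r'_0, r'_1$ the return words to $w'$, apply $\varphi_a$ to get $\uu = r_{s_0}r_{s_1}\cdots$ with $r_i = \varphi_a(r'_i)$; one checks $w$ is a prefix of each tail $r_{s_k}r_{s_{k+1}}\cdots$ and that every occurrence of $w$ in $\uu$ is of the form $|r_{s_0}\cdots r_{s_j}|$, using the unique-preimage lemma to pull an arbitrary occurrence of $w$ back to an occurrence of $w'$ in $\uu'$. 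This yields $d_{\uu}(w) = d_{\uu'}(w')$, completing $\Der(\uu') \subseteq \Der(\uu)$, and together with the previous paragraph, $\Der(\uu) = \Der(\uu')$.

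The main obstacle I anticipate is the bookkeeping around the $\pm 1$ in lengths and the exact form of the preimage: unlike $\varphi_b$ (where a right special prefix conveniently had the shape $\varphi_b(w')0$), for $\varphi_a$ a right special prefix $w$ of $\uu$ need not be exactly $\varphi_a(w')$ — the trailing $0$ of $w$ may be either the last letter of a block $\varphi_a(1)=10$ or a full block $\varphi_a(0)=0$, so I must argue carefully that in both situations the occurrence sets transfer correctly and that $|w'| \le |w|$. The desynchronization argument for the unique-preimage lemma — showing that a $1$ in $\uu$ cannot be the second letter of a block — is where the hypothesis "$\uu$ starts with $1$" is genuinely used, and getting that airtight (as opposed to hand-waved) is the technical heart of the proof.
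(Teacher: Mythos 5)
Your overall strategy---a direct synchronization argument for $\varphi_a$ parallel to the one given for $\varphi_b$---is workable, but it is not the paper's route. The paper reduces the $\varphi_a$ case to the already-proved $\varphi_b$ case in a few lines via the conjugacy $0\varphi_a(x)=\varphi_b(x)0$: it sets $\mathbf v = 0\uu$, so that $\mathbf v=\varphi_b(\uu')$, observes that every $1$ in $\uu$ is surrounded by $0$'s and hence the occurrences of a prefix $w$ of $\uu$ coincide with the occurrences of $0w$ in $\mathbf v$, giving $d_{\uu}(w)=d_{\mathbf v}(0w)$ and $\Der(\uu)=\{d_{\mathbf v}(v)\colon v \text{ a prefix of } \mathbf v,\ |v|>1\}$; then \Cref{thm:preimages_fi_b} and \Cref{prop:der_of_images_fi_b} finish the proof, the excluded length-one prefix of $\mathbf v$ being exactly what removes the extra term $\{\uu'\}$. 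That reduction buys the theorem without re-proving any synchronization lemma; your approach re-does the work of \Cref{lem:unique_preimage_under_fi_b}, \Cref{prop:der_of_images_fi_b} and \Cref{prop:der_of_preimage_fi_b} for $\varphi_a$ directly.

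Within your route there is one concrete error to repair. The synchronization lemma as you state it is false: a factor $p$ that begins \emph{and ends} with $1$ is never $\varphi_a(p')$ for a nonempty $p'$, since $\varphi_a(0)=0$ and $\varphi_a(1)=10$ both end in $0$, so every nonempty image ends in $0$. The correct statement---which also dissolves all of your hedging about a possibly removed trailing $0$---is: if $p\in\mathcal L(\uu)$ begins with $1$ and ends with $0$, then $p=\varphi_a(p')$ for a unique factor $p'$ of $\uu'$. Indeed, every $1$ in $\uu$ opens a block, and any position immediately following a $0$ is a block boundary, so such a $p$ is a concatenation of complete blocks; uniqueness is injectivity of $\varphi_a$. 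A right special prefix $w$ of $\uu$ begins with $1$ and ends with $0$ (in $\uu$ every $1$ is followed by $0$, so a factor ending in $1$ is not right special), hence $w=\varphi_a(w')$ \emph{exactly}, with $w'$ a prefix of $\uu'$ and $|w'|<|w|$. Also, your justification in the converse direction that right special prefixes of $\uu'$ end in $0$ is wrong in general ($11$ may be a factor of $\uu'$), but it is not needed: $w=\varphi_a(w')$ begins with $1$ and ends with $0$ automatically. With these repairs your direct argument goes through.
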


\begin{proof}
	The morphisms $\varphi_a$ and $\varphi_b$ are conjugate, that is, $0\varphi_a(x)=\varphi_b(x)0$ for each word $x$.
	This means that for any prefix $u'_0u'_1u'_2 \cdots u'_k$ of $\uu'$ we have $0\varphi_a(u'_0u'_1u'_2 \cdots u'_k) = \varphi_b(u'_0u'_1u'_2 \cdots u'_k)0$.
	As this holds true for each $k$, we obtain $0\uu = 0\varphi_a(\uu') =\varphi_b(\uu')$.

	Denote ${\bf v} = v_0v_1v_2\cdots  = 0u_0u_1u_2 \cdots$.
	We have $v_i = u_{i-1}$ for each $i\geq 1$.
	Let $w$ be a nonempty prefix of $\uu$ and $(i_n)$ be the increasing sequence of its occurrences in $\uu$. Note that  $w$ starts with the letter $1$.
	This letter is in $\uu$ surrounded by $0$'s.
	Thus the sequence $(i_n) $ is also the sequence of occurrences of $0w$ in ${\bf v}$ and thus $d_{\bf v}({0w}) = d_{\bf u}({w})$.
	It follows that
	\[
		\Der(\uu)=\{d_{\bf v}(v) \colon v \text{ is a prefix of $\bf v$ and } |v|>1\}\,.
	\]
	We finish the proof by applying Theorem \ref{thm:preimages_fi_b} and Proposition \ref{prop:der_of_images_fi_b} to the word ${\bf v} =\varphi_b(\uu')$.
\end{proof}

The only case which is not treated by Theorems \ref{thm:preimages_fi_b} and \ref{thm:preimages_fi_a}, namely the case when $\uu = \varphi_a(\uu')$ and $\uu$ begins with $0$, can be translated into one of the previous cases.

\begin{lem}\label{lem:prevod_fi_a_na_fi_b}
	Let $\uu$ be a Sturmian word such that $\uu$ starts with the letter $0$ and $ \uu = \varphi_a(\uu')$ for some word $\uu'$.
	Then there exists a Sturmian word $\bf v$ such that $\uu' = 0{\bf v}$ and $\uu = \varphi_b({\bf v})$.
\end{lem}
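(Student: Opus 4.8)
The plan is to first pin down $\uu'$ and then transfer the factorisation from $\varphi_a$ to $\varphi_b$ by means of the conjugacy of these two morphisms. Since $\varphi_a$ is non-erasing, $\uu'$ is an infinite word; moreover $\uu'$ cannot begin with the letter $1$, because $\varphi_a(1) = 10$ would force $\uu$ to start with $1$, contradicting the hypothesis. Hence $\uu'$ begins with $0$ and we may write $\uu' = 0{\bf v}$ for some infinite word ${\bf v}$. Applying $\varphi_a$ then yields $\uu = \varphi_a(0{\bf v}) = 0\,\varphi_a({\bf v})$.

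Next I would invoke the conjugacy relation $0\varphi_a(x) = \varphi_b(x)0$, which holds for every finite word $x$; this is exactly the identity exploited in the proof of \Cref{thm:preimages_fi_a}, and it follows by a one-line induction on $|x|$. Applying it to every prefix $x = {\bf v}_{[0,k)}$ of ${\bf v}$ shows that $\varphi_b\bigl({\bf v}_{[0,k)}\bigr)$ is simultaneously a prefix of $\varphi_b({\bf v})$ and of $0\,\varphi_a({\bf v})$; since $\bigl|\varphi_b({\bf v}_{[0,k)})\bigr| \to \infty$ as $k \to \infty$, I conclude $\varphi_b({\bf v}) = 0\,\varphi_a({\bf v}) = \uu$. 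This is precisely the asserted factorisation $\uu = \varphi_b({\bf v})$, and together with $\uu' = 0{\bf v}$ it gives both required identities.

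Finally, because $\uu = \varphi_b({\bf v})$ is Sturmian, Item~(i) of the Proposition of~\cite{Lo83} recalled earlier in this section (a form of Proposition~2.3.2 there) immediately yields that ${\bf v}$ is Sturmian, which completes the argument. No step here is genuinely hard; the only point demanding a little care is the passage from the finite conjugacy identity to its infinite-word version, which is handled by the prefix--limit argument described above.
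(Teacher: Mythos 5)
Your proposal is correct and follows essentially the same route as the paper: identify the forced leading $0$ of $\uu'$, write $\uu' = 0{\bf v}$, and use the conjugacy $0\varphi_a(x)=\varphi_b(x)0$ to get $\uu = 0\varphi_a({\bf v}) = \varphi_b({\bf v})$. Your extra care in passing the conjugacy identity to infinite words via prefixes, and in deducing that ${\bf v}$ is Sturmian from Item~(i) of the recalled proposition only after establishing $\uu=\varphi_b({\bf v})$, just makes explicit what the paper's terser proof leaves implicit.
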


\begin{proof}
	Since $\uu$ starts with $0$, the form of $\varphi_a$ implies that $\uu' = 0{\bf v}$ for some Sturmian word $\bf v$.
	As $0\varphi_a(x)=\varphi_b(x)0$ for each word $x$, we have
	\[
		\uu = \varphi_a(\uu') = \varphi_a(0{\bf v}) = 0 \varphi_a({\bf v}) = \varphi_b({\bf v}). \qedhere
	\]
\end{proof}
To sum up the results of this section, let us assume we have a sequence of Sturmian words $\uu_0, \uu_1, \uu_2, \ldots$ such that $\uu = \uu_0$ and for every $i \in \N$ one of the following is true:
\begin{enumerate}[(i)]
	\item $\uu_i = \varphi_b(\uu_{i+1})$ or $\uu_i = \varphi_\beta(\uu_{i+1})$, \label{it:seq_pos_1}
	\item $\uu_i$ begins with $1$ and $\uu_i = \varphi_a(\uu_{i+1})$, \label{it:seq_pos_2}
	\item $\uu_i$ begins with $0$ and $\uu_i = \varphi_\alpha(\uu_{i+1})$. \label{it:seq_pos_3}
\end{enumerate}
If \ref{it:seq_pos_1} holds for $\uu_i$, then by Theorem~\ref{thm:preimages_fi_b}
\[
	\mathrm{Der}(\uu_i) = \mathrm{Der}(\uu_{i+1}) \cup \{\uu_{i+1}\},
\]
moreover, $\uu_{i+1}$ is the derivated word of the first letter of $\uu_i$.
This first letter is also the shortest right special prefix.
If \ref{it:seq_pos_2} or \ref{it:seq_pos_3} holds for $\uu_i$, then by Theorem~\ref{thm:preimages_fi_a}
\[
	\mathrm{Der}(\uu_i) = \mathrm{Der}(\uu_{i+1}).
\]

The crucial assumption, namely the existence of the above described sequence $(\uu_k)_{k\geq 0}$, is guaranteed by the well-known fact on the desubstitution of Sturmian words (see, e.g.,~\cite{JuPi} and~\cite{HeMo} and also \Cref{lem:prevod_fi_a_na_fi_b}).
Here we formulate this fact as the following theorem:
\begin{thm}[\cite{JuPi}, \cite{HeMo}]
An infinite binary word $\uu$ is Sturmian if and only if there exists an infinite word ${\bf w}=w_0w_1w_2\cdots$ over the alphabet $\{a,b,\alpha, \beta\}$ and an infinite sequence $(\uu_i)_{i\geq 0}$,  such that  $\uu = \uu_0$ and 
$\uu_{i} = \varphi_{w_i}(\uu_{i + 1})$ for all $i \in \mathbb{N}$. 
 \end{thm}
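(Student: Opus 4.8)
\emph{Plan.} I would prove the two implications separately. The forward implication ``$\uu$ Sturmian $\Rightarrow$ a representation exists'' is the desubstitution procedure already outlined in the paragraph preceding the statement, and its engine is Proposition~2.3.2 of~\cite{Lo83} recalled at the start of~\Cref{sec:der_wo_St_preim}; since this is the direction used later in the paper, I would carry it out in full. The backward implication rests on the facts that $\varphi_a,\varphi_b,\varphi_\alpha,\varphi_\beta$ are Sturmian morphisms and that one may add a harmless non-degeneracy hypothesis on $\mathbf w$; it is in essence the theorem of~\cite{JuPi,HeMo}, which I would invoke rather than reprove.

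\emph{Forward implication.} Let $\uu$ be Sturmian. From $\mathcal{C}_{\uu}(2)=3$ exactly one of the four words $00,01,10,11$ is missing from $\mathcal{L}(\uu)$; it cannot be $01$ (otherwise no occurrence of $0$ is followed by a $1$, forcing $\uu$ to be eventually $0^{\infty}$, contrary to aperiodicity) and, symmetrically, not $10$. So $11\notin\mathcal{L}(\uu)$ or $00\notin\mathcal{L}(\uu)$. If $11\notin\mathcal{L}(\uu)$ and $\uu$ begins with $1$, then every $1$ is immediately followed by $0$, so $\uu$ parses \emph{uniquely} into the blocks $\varphi_a(0)=0$ and $\varphi_a(1)=10$; recording the blocks yields a word $\uu_1$ beginning with $1$ such that $\uu=\varphi_a(\uu_1)$, and part~(ii) of Proposition~2.3.2 of~\cite{Lo83} gives that $\uu_1$ is Sturmian. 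If $11\notin\mathcal{L}(\uu)$ and $\uu$ begins with $0$, then every $1$ is immediately preceded by $0$, and the analogous unique parse into $\varphi_b(0)=0$ and $\varphi_b(1)=01$ (the leading run of $0$'s absorbed into the first blocks) gives $\uu_1$ with $\uu=\varphi_b(\uu_1)$, Sturmian by part~(i) of the same proposition. If $00\notin\mathcal{L}(\uu)$, apply the letter exchange $E$: the word $E(\uu)$ avoids $11$, the previous case writes $E(\uu)=\varphi_{v_0}(\mathbf x)$ with $v_0\in\{a,b\}$ and $\mathbf x$ Sturmian, and then $\uu=\varphi_{F(v_0)}\bigl(E(\mathbf x)\bigr)$ with $F(v_0)\in\{\alpha,\beta\}$ and $E(\mathbf x)$ Sturmian, using $\varphi_\alpha=E\varphi_aE$ and $\varphi_\beta=E\varphi_bE$ from~\eqref{zamena}. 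In each subcase the pair $(w_0,\uu_1)$ satisfies one of conditions~\ref{it:seq_pos_1}--\ref{it:seq_pos_3}, and $\uu_1$ is Sturmian; iterating the construction with $\uu_1$ in place of $\uu$ produces the required sequence $(\uu_i)_{i\ge0}$ and directive word $\mathbf w$. (That $\varphi_a$ is used exactly when $\uu$ begins with $1$ is precisely~\Cref{lem:prevod_fi_a_na_fi_b}.)

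\emph{Backward implication.} Conversely, let $(\uu_i)_{i\ge0}$ and $\mathbf w$ be given with $\uu=\uu_0$ and $\uu_i=\varphi_{w_i}(\uu_{i+1})$. Each $\varphi_{w_i}$ is a Sturmian morphism, so $\sigma_n:=\varphi_{w_0}\circ\cdots\circ\varphi_{w_{n-1}}$ is one and $\uu=\sigma_n(\uu_n)$ for all $n$. I would first record the (harmless, and automatic for the $\mathbf w$ built in the forward direction) hypothesis that $\mathbf w$ is not eventually a word over $\{a,b\}$ nor over $\{\alpha,\beta\}$: without it $\uu$ may fail to be Sturmian, as $\uu=0^{\infty}=\varphi_a(0^{\infty})$ shows. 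Under this hypothesis a standard renormalization/compactness argument (see~\cite{JuPi,HeMo}) shows that every infinite word in $\bigcap_{n}\sigma_n\bigl(\{0,1\}^{\mathbb{N}}\bigr)$ has one and the same Sturmian language; since $\uu$ lies in this intersection, $\uu$ is Sturmian.

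\emph{Main obstacle.} The only point of the forward implication requiring genuine care is the well-definedness and uniqueness of the parse of $\uu$ into $\varphi_{w_0}$-blocks; this is exactly where $11\notin\mathcal{L}(\uu)$ (respectively $00\notin\mathcal{L}(\uu)$) and the first letter of $\uu$ are used, in the manner of~\Cref{lem:unique_preimage_under_fi_b}. In the backward implication the technical heart is the renormalization limit, which is the theorem of~\cite{JuPi,HeMo} itself; I would cite it and supply only the short argument ruling out the degenerate directive words.
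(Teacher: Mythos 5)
The paper does not prove this statement at all: it is quoted as a known result with citations to \cite{JuPi} and \cite{HeMo}, so there is no internal proof to compare against. Your forward implication is correct and self-contained, and it uses exactly the machinery the paper has already assembled: the complexity count $\mathcal{C}_{\uu}(2)=3$ forcing $00\notin\mathcal{L}(\uu)$ or $11\notin\mathcal{L}(\uu)$, the unique parse into $\varphi_a$- or $\varphi_b$-blocks, the recalled Proposition~2.3.2 of \cite{Lo83} to see that the desubstituted word is again Sturmian, and the conjugation \eqref{zamena} to handle the Greek-letter cases; the case split by first letter is consistent with \Cref{lem:prevod_fi_a_na_fi_b}. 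Your remark on the backward implication is also substantively correct and is in fact a (minor) criticism of the statement as printed: taking $w_i=a$ and $\uu_i=0^{\infty}$ for all $i$ satisfies every hypothesis yet $0^{\infty}$ is not Sturmian, so the ``if'' direction genuinely requires the non-degeneracy condition you add (the directive word must contain infinitely many Latin and infinitely many Greek letters, i.e.\ not be eventually over $\{a,b\}$ nor over $\{\alpha,\beta\}$); the precise versions in \cite{JuPi} carry such a condition. The only part of your argument that remains a citation rather than a proof is the renormalization limit in the backward direction, but since the paper only ever uses the forward direction (and only for fixed points of explicit morphisms, where the periodic directive word is read off the morphism's name), this reliance is harmless and no worse than the paper's own treatment.
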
  

In the following section  we work only with   the sequence $(\uu_i)_{i \geq 0}$ corresponding to a fixed point $\uu$ of  a Sturmian morphism $\psi$.  The next lemma provides us a simple technical tool for a description of the elements $\uu_i$   as fixed points of some  Sturmian morphisms.

\begin{lem}\label{lem:rotace_morfizmu}
	Let $\xi$ and $\eta$ be Sturmian morphisms and $\uu = \bigl(\xi \circ \eta\bigr) (\uu)$.
	If $\uu = \xi(\uu')$ for some $\uu'$, then $\uu'$ is the fixed point of the morphism $\eta \circ \xi$, i.e. $\uu' = \bigl( \eta \circ \xi\bigr)(\uu')$.
\end{lem}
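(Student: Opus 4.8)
The plan is to derive the conclusion directly from the fixed-point equation together with the injectivity of Sturmian morphisms on one-sided infinite words. First I would note that, since $\uu$ is infinite and $\xi$ is non-erasing, the preimage $\uu'$ with $\uu = \xi(\uu')$ is itself an infinite word, so that the equality $\uu' = (\eta\circ\xi)(\uu')$ to be proved is meaningful.

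The computation is short. Starting from the hypothesis $\uu = (\xi\circ\eta)(\uu)$ and substituting $\uu = \xi(\uu')$, I would write
\[
\xi(\uu') = \uu = (\xi\circ\eta)(\uu) = \xi\bigl(\eta(\uu)\bigr) = \xi\bigl(\eta(\xi(\uu'))\bigr) = \xi\bigl((\eta\circ\xi)(\uu')\bigr).
\]
Hence $\xi$ sends the two infinite words $\uu'$ and $(\eta\circ\xi)(\uu')$ to the same image.

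It then remains only to cancel $\xi$, and this is the one and only place where the Sturmian hypothesis on $\xi$ enters: every Sturmian morphism is injective on $\A^{\mathbb N}$. This is the same fact already invoked (for $\varphi_b$) in the proofs of \Cref{prop:der_of_images_fi_b,prop:der_of_preimage_fi_b}. If one wishes to be self-contained, the minor technical point — that injectivity on finite words upgrades to infinite words — can be handled by observing that any Sturmian morphism is a composition of $E$, $\varphi_a$, $\varphi_b$, where $E$ is a bijection, the images $\{0,10\}$ of $\varphi_a$ form a prefix code, and the images $\{0,01\}$ of $\varphi_b$ form a code with deciphering delay $1$; each of these is therefore injective on one-sided infinite words, and so is their composition (and $\varphi_\alpha,\varphi_\beta$ are obtained by conjugating $\varphi_a,\varphi_b$ with $E$). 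Applying this injectivity to the displayed equation yields $\uu' = (\eta\circ\xi)(\uu')$, which is exactly the claim. There is essentially no obstacle: the statement is a formal consequence of injectivity, and the only step requiring a word of care is the finite-to-infinite upgrade of injectivity just sketched.
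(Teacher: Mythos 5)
Your proof is correct and follows essentially the same route as the paper's: substitute $\uu = \xi(\uu')$ into the fixed-point equation to get $\xi(\uu') = \xi\bigl((\eta\circ\xi)(\uu')\bigr)$ and cancel $\xi$ using the injectivity of Sturmian morphisms on infinite words (which the paper simply asserts, whereas you additionally sketch its justification).
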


\begin{proof}
	For any Sturmian morphism $\xi$, the equation $\xi(\bf x) = \xi (\bf y)$ implies that ${\bf x}= { \bf y}$.
	We deduce that
	\[
	\xi(\uu') = \uu = \bigl(\xi \circ \eta\bigr) (\uu) = \bigl(\xi \circ \eta\bigr) \bigl(\xi (\uu')\bigr) = \bigl( \xi \circ \eta\circ \xi\bigr) (\uu')\,,
	\]
	and so $\uu' = \bigl(\eta \circ \xi\bigr)(\uu')$.
\end{proof}

%
%
\section{Derivated words of fixed points of Sturmian morphisms} \label{sec:der_wo_of_fixed_points}

Let $\uu$ be an fixed point of a primitive Sturmian morphism (note that if the morphism is primitive, all its fixed points are aperiodic).
It is known due to Durand~\cite{Durand98} that the set $\mathrm{Der}(\uu)$ is finite (as the morphism is primitive).
Put
\[
	\mathrm{Der}(\uu) = \{\mathbf{x}_1, \mathbf{x}_2, \ldots, \mathbf{x}_\ell\}.
\]
Our main result is an algorithm that returns a list of Sturmian morphisms $\psi_1, \psi_2, \ldots, \psi_\ell$ such that $\mathbf{x}_i$ is a fixed point of $\psi_i$ (up to a permutation of letters) for all $i$ such that $1 \leq i \leq \ell$.

As we have noticed before, we can restrict ourselves to the morphisms belonging to the monoid $\mathcal{M} = \langle \varphi_a, \varphi_b, \varphi_\alpha, \varphi_\beta \rangle $.
Let us recall (see Lemma~\ref{lem:properties_of_sturm_morph}) that a morphism from $\langle \varphi_a, \varphi_b \rangle $ or from $\langle \varphi_\alpha, \varphi_\beta \rangle $ is not primitive and has no aperiodic fixed point.
Thus we consider only morphisms $\varphi_w$ whose normalized name $w$ contains at least one Latin and one Greek letter.

We will treat two cases separately.
The first one is the case when the morphism $\varphi_w$ has only one fixed point. Lemma~\ref{lem:properties_of_sturm_morph} says that in such a case $w \notin \{a, \alpha\}^*$.
In the second case, when $w \in \{a, \alpha\}^*$, the morphism  $\varphi_w$ has two fixed points.

\subsection{Morphisms with unique fixed point}
Let $\psi \in  \langle\varphi_a, \varphi_b, \varphi_\alpha, \varphi_\beta\rangle$ and $ N(\psi) = w \in \{a,b,\alpha,\beta\}^*\setminus \{a, \alpha\}^*$ be the normalized name of the morphism $\psi$. By  Lemma
\ref{lem:normalized_words}  the word  $w$  has a prefix $a^k\beta$ or $\alpha^kb$ for some $k\in \mathbb{N}$.
This property  enables us to define a transformation on the set of morphisms from $\mathcal{M} \setminus \langle \varphi_a, \varphi_\alpha\rangle$.
As we will demonstrate later, this transformation is in fact the desired algorithm returning the morphisms $\psi_1, \psi_2, \ldots, \psi_\ell$ mentioned above.
\begin{defi} \label{def:delta}
	Let $w\in  \{a,b,\alpha,\beta\}^*\setminus \{a,\alpha\}^*$ be the normalized name of a morphism $\psi$, i.e., $\psi = \varphi_{w}$. We put
	\[
		\Delta(w) =
			\begin{cases}
				N( w'a^k\beta) & \text{ if \ } w = a^k\beta w', \\
				N(w'\alpha^kb) & \text{ if \ } w = \alpha^kb w'
			\end{cases}
	\]
	and, moreover, $\Delta (\psi) = \varphi_{\Delta(w)}$.
\end{defi}

\begin{example}\label{example:degAnedeg}
	Consider the morphism $\psi = \varphi_w$,  where $w = \beta \alpha aa \alpha$,  and apply repeatedly the transformation $\Delta$ on $\psi$.
	\begin{align*}
		\psi & =  \varphi_{\beta \alpha a a \alpha} \quad \text{ and } \quad  N(\psi)= w =\beta \alpha aa \alpha \\
		\Delta (\psi) & = \varphi_{\alpha a a \alpha \beta} \quad \text{ and } \quad N\bigl(\Delta (\psi)\bigr)= \beta bb \alpha  \alpha \\
		\Delta^2 (\psi) & = \varphi_{ bb \alpha  \alpha \beta} \quad \text{ and } \quad N\bigl(\Delta^2 (\psi)\bigr) = bb \beta  \alpha  \alpha \\
		\Delta^3 (\psi) & = \varphi_{b \beta  \alpha  \alpha b} \quad \text{ and } \quad N\bigl(\Delta^3(\psi)\bigr) = b \beta  \alpha  \alpha b \\
		\Delta^4 (\psi) & = \varphi_{\beta  \alpha  \alpha b b} \quad \text{ and } \quad N\bigl(\Delta^4 (\psi)\bigr)  =  \beta  \alpha  \alpha bb \\
		\Delta^5 (\psi) & = \varphi_{\alpha  \alpha bb \beta} \quad \text{ and } \quad N\bigl(\Delta^5 (\psi)\bigr) =  \alpha  \alpha bb \beta \\
		\Delta^6 (\psi) & =  \Delta^3 (\psi)
	\end{align*}
	In what follows we prove that the five fixed points of morphisms $\Delta (\psi), \Delta^2(\psi), \Delta^3(\psi), \Delta^4(\psi), \Delta^5(\psi)$ are exactly the five derivated words of the fixed point of $\psi$.
\end{example}

\begin{lem}\label{lem:tvar_prefixu}
	Let $\uu$ be a fixed point of a morphism $\psi$ and $ N(\psi) = w \in \{a,b,\alpha,\beta\}^*$ be the normalized name of the morphism $\psi$.
	If one of the following condition is satisfied
	\begin{enumerate}[(i)]
		\item $\uu$ starts with $0$ and $w$ starts with $a$,
		\item $\uu$ starts with $1$ and $w$ starts with $\alpha$,
	\end{enumerate}
	then $w \in \{a, \alpha\}^*$.
\end{lem}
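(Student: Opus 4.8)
The plan is to prove the contrapositive: assuming $w \notin \{a,\alpha\}^*$, I will show that condition (i) fails and condition (ii) fails, i.e. if $w$ starts with $a$ then $\uu$ starts with $1$, and if $w$ starts with $\alpha$ then $\uu$ starts with $0$. By the symmetry between Latin and Greek letters coming from~\eqref{zamena} (conjugation by $E$, tracked by $F$ as in~\eqref{skladani}), it suffices to treat one of the two cases; I would do the case $w$ starts with $a$. So suppose $w = a^k v$ with $k \geq 1$ maximal, so that $v$ starts with a letter other than $a$; since $w \notin \{a,\alpha\}^*$ and $w = N(\psi)$, Lemma~\ref{lem:normalized_words} tells us that actually $N(w)$ — which is $w$ — has prefix $a^i\beta$ or $\alpha^i b$, and since $w$ starts with $a$ we are in the first subcase: $w$ has prefix $a^k\beta$ for some $k \geq 1$. (If $w \in \{a,\beta,\ldots\}$ but the very first non-$a$ letter were $b$, then $w$ starts with $a^k b$; but then $N(w)$ would begin $a\alpha^{?}\cdots$ only after rewriting — I need to be a little careful here and instead simply note: if the first non-$a$ letter of $w$ is a Greek letter it must be $\beta$ by normalization is not quite it either. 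The clean statement I will use is the second sentence of Lemma~\ref{lem:normalized_words}: since $w = N(w) \notin \{a,\alpha\}^*$, $w$ has prefix $a^i\beta$ or $\alpha^i b$; combined with "starts with $a$", we get prefix $a^k\beta$ with $k \geq 0$, and if $k=0$ then $w$ starts with $\beta$, contradicting "starts with $a$", so $k \geq 1$.)

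With $w = a^k\beta w'$, $k \geq 1$, write $\psi = \varphi_{a}^k \circ \varphi_\beta \circ \varphi_{w'}$. Now I compute the first letter of $\psi(0)$, which must equal the first letter $u_0$ of the fixed point $\uu = \psi(\uu)$. Since $\varphi_\beta(0) = 10$ and $\varphi_\beta(1) = 1$, the word $\varphi_\beta(\varphi_{w'}(0))$ starts with the letter $1$ (it starts with $1$ regardless of what $\varphi_{w'}(0)$ is, because $\varphi_\beta$ sends every letter to a word starting with $1$). Then applying $\varphi_a$ repeatedly: $\varphi_a(0) = 0$, $\varphi_a(1) = 10$, so $\varphi_a$ also sends a word starting with $1$ to a word starting with $1$. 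Hence $\psi(0) = \varphi_a^k(\varphi_\beta(\varphi_{w'}(0)))$ starts with $1$. Since $\uu$ starts with $\psi(u_0)$ and, in particular, the first letter of $\uu$ is the first letter of $\psi(0)$ or $\psi(1)$ according to whether $u_0 = 0$ or $u_0 = 1$ — but more directly: $\uu = \psi(\uu)$ so $u_0$ is the first letter of $\psi(u_0)$. If $u_0 = 0$ we just showed $\psi(0)$ starts with $1$, contradiction; hence $u_0 = 1$, i.e. $\uu$ starts with $1$. This contradicts the hypothesis in (i) that $\uu$ starts with $0$. Symmetrically, if $w$ starts with $\alpha$ (and $w \notin \{a,\alpha\}^*$) then $w$ has prefix $\alpha^k b$, $k \geq 1$, and since $\varphi_b(x)$ starts with $0$ for every letter $x$ and $\varphi_\alpha$ preserves the property of starting with $0$, $\psi(1)$ starts with $0$, forcing $u_0 = 0$, contradicting (ii).

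The only real subtlety — and the step I'd be most careful about — is the reduction via Lemma~\ref{lem:normalized_words} that "$w = N(w)$, $w \notin \{a,\alpha\}^*$, and $w$ starts with $a$" together force the prefix to be exactly $a^k\beta$ (not $a^k b \cdots$). This is immediate from the second sentence of Lemma~\ref{lem:normalized_words}, which says the normalized name has prefix $a^i\beta$ or $\alpha^i b$; intersecting with "starts with $a$" kills the second option and forces $i \geq 1$ in the first. Everything after that is the routine observation that $\varphi_\beta$ and $\varphi_b$ send every letter to a word beginning with $1$ resp. $0$, and that $\varphi_a$ resp. $\varphi_\alpha$ preserve "begins with $1$" resp. "begins with $0$", so the first letter of $\psi$ applied to any letter is forced, and then matching against $\uu = \psi(\uu)$ pins down $u_0$. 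I would present the case $w$ starts with $a$ in full and dispatch the case $w$ starts with $\alpha$ by invoking the $E$/$F$ symmetry from Remark~\ref{nerozlisuj}.
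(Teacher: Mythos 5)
Your proposal is correct and follows essentially the same route as the paper's proof: invoke Lemma~\ref{lem:normalized_words} to get the prefix $a^k\beta$ (with $k\ge 1$) when $w\notin\{a,\alpha\}^*$ starts with $a$, then observe that $\varphi_a^k\circ\varphi_\beta\circ\eta$ sends both letters to words beginning with $1$, forcing the fixed point to start with $1$ and contradicting (i), with (ii) handled symmetrically. The only difference is presentational (you phrase it as a contrapositive and spell out the ``preserves first letter'' bookkeeping more explicitly), so no further comment is needed.
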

\begin{proof}
	We consider only the case $(i)$, the case $(ii)$ is analogous.
 	Let us assume $w \notin \{a, \alpha\}^*$.
 	According to Lemma \ref{lem:normalized_words}, the word $w$ has a prefix $a^k\beta$, for some $k \geq 1$.
 	Consequently, the morphism $\psi$ equals $\varphi_a^k \circ \varphi_\beta\circ \eta$ for some morphism $\eta$.
 	Any morphism of this form maps $0$ to $1w_1 $ and $1$ to $1w_2$ for some words $w_1$ and $w_2$.
 	Therefore, the fixed point starts with the letter $1$, which is a contradiction.
\end{proof}
The following theorem along with~\Cref{def:delta} provide the algorithm which to a given Sturmian morphism $\psi$  lists  the morphisms fixing the derivated words of the Sturmian word ${\bf u} = \psi({\bf u})$.
\begin{thm}\label{thm:main_result_vetsina}
	Let $\psi \in \langle\varphi_a, \varphi_b, \varphi_\alpha, \varphi_\beta\rangle$ be a primitive morphism and $ N(\psi) = w \in \{a,b,\alpha,\beta\}^* \setminus \{a, \alpha\}^*$ be its normalized name.
	Denote $\uu$ the fixed point of $\psi$.
	Then $\mathbf{x}$ is (up to a permutation of letters) a derivated word of $\uu$ with respect to one of its prefixes if and only if $\mathbf{x}$ is the fixed point of the morphism $\Delta^j(\psi)$ for some $j \geq 1$.
\end{thm}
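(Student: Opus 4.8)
The plan is to exploit the desubstitution machinery of Section~\ref{sec:der_wo_St_preim} together with the observation that applying $\Delta$ corresponds to peeling off exactly the prefix of $N(\psi)$ that desubstitution removes. Concretely, suppose $N(\psi)=w$ has prefix $a^k\beta$ (the case $\alpha^k b$ is symmetric by~\eqref{zamena}), so $w = a^k\beta w'$ and $\psi = \varphi_a^k\circ\varphi_\beta\circ\varphi_{w'}$. By \Cref{lem:tvar_prefixu}, since $w\notin\{a,\alpha\}^*$ and $w$ starts with $a$, the fixed point $\uu$ of $\psi$ must start with the letter $1$. Now form the desubstitution sequence $\uu = \uu_0, \uu_1, \ldots$: the first $k$ steps strip off copies of $\varphi_a$ (each $\uu_i$ starts with $1$, so case~\ref{it:seq_pos_2} applies and $\mathrm{Der}(\uu_i)=\mathrm{Der}(\uu_{i+1})$), and the $(k{+}1)$-st step strips off $\varphi_\beta$, for which case~\ref{it:seq_pos_1} gives $\mathrm{Der}(\uu_k) = \mathrm{Der}(\uu_{k+1})\cup\{\uu_{k+1}\}$ with $\uu_{k+1}$ being the derivated word of the shortest right special prefix. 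Thus
\[
	\mathrm{Der}(\uu) = \mathrm{Der}(\uu_{k+1})\cup\{\uu_{k+1}\}.
\]
The key identification: by \Cref{lem:rotace_morfizmu} applied iteratively (with $\xi=\varphi_a$ then $\xi=\varphi_\beta$, rotating the name cyclically each time), $\uu_{k+1}$ is the fixed point of $\varphi_{w'}\circ\varphi_a^k\circ\varphi_\beta = \varphi_{w'a^k\beta}$. Since $N(w'a^k\beta) = \Delta(w)$ by \Cref{def:delta}, we conclude that $\uu_{k+1}$ is precisely the fixed point of $\Delta(\psi)$.

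Next I would check that $\Delta$ is well-defined on the relevant set: we need $\Delta(w)\notin\{a,\alpha\}^*$ whenever $w\notin\{a,\alpha\}^*$, so that the iteration $\Delta^j(\psi)$ never leaves the domain of $\Delta$ and never produces a morphism with two fixed points. This is immediate because $\Delta(w) = N(w'a^k\beta)$ contains the Greek letter $\beta$, and normalization via~\eqref{eq:relations} preserves which positions hold Greek letters; likewise $\Delta(w)$ still contains a Latin letter since the rewriting $\alpha a^j\beta = \beta b^j\alpha$ can only replace Latin $a$'s by Latin $b$'s and $a\alpha^jb=b\beta^ja$ similarly, so primitivity (\Cref{lem:properties_of_sturm_morph}(i)) is preserved. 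Hence every $\Delta^j(\psi)$ is a primitive morphism with a unique fixed point, call it $\mathbf{x}^{(j)}$.

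With these two facts the theorem follows by induction. For the ``only if'' direction: \Cref{thm:derstst} and Durand's finiteness tell us $\mathrm{Der}(\uu)$ is finite; the display $\mathrm{Der}(\uu) = \mathrm{Der}(\mathbf{x}^{(1)})\cup\{\mathbf{x}^{(1)}\}$ together with the inductive hypothesis applied to $\Delta(\psi)$ (whose normalized name is shorter in the appropriate well-founded sense, or at least lands us in the same finite orbit) shows every element of $\mathrm{Der}(\uu)$ is either $\mathbf{x}^{(1)}$ or lies in $\mathrm{Der}(\mathbf{x}^{(1)}) = \bigcup_{j\geq 1}\{\mathbf{x}^{(j+1)}\}$, hence equals some $\mathbf{x}^{(j)}$, $j\geq 1$. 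For the ``if'' direction: $\mathbf{x}^{(1)}\in\mathrm{Der}(\uu)$ directly from the display, and for $j\geq 2$, $\mathbf{x}^{(j)}\in\mathrm{Der}(\Delta(\psi)$'s fixed point$) = \mathrm{Der}(\mathbf{x}^{(1)})\subseteq\mathrm{Der}(\uu)$ by induction, using that a derivated word of a derivated word is again a derivated word (Proposition~2.6 of~\cite{Durand98}, already invoked in the proof of \Cref{thm:derstst}). The subtle point to handle carefully is the induction's termination: the sequence $(\Delta^j(\psi))_{j\geq1}$ need not be eventually constant in a naive sense but it is eventually periodic (as \Cref{example:degAnedeg} shows), so I would phrase the induction on, say, the length $|N(\psi)|$ together with a separate argument that $\mathrm{Der}$ of any fixed point in the orbit is captured by finitely many $\Delta$-iterates — which is exactly where Durand's finiteness result does the essential work and is the main obstacle to a fully clean write-up.
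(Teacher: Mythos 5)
Your structural setup is exactly the paper's: you peel off the prefix $a^k\beta$ of $N(\psi)$ by desubstitution, use \Cref{thm:preimages_fi_a} for the $k$ copies of $\varphi_a$ and \Cref{thm:preimages_fi_b} for the $\varphi_\beta$ step, and identify $\uu_{k+1}$ as the fixed point of $\varphi_{w'a^k\beta}=\Delta(\psi)$ via \Cref{lem:rotace_morfizmu}. That part, and the resulting ``if'' direction ($\mathbf{x}^{(j)}\in\Der(\uu)$ for every $j$), is sound. The problem is the ``only if'' direction, and you have correctly located it but not resolved it. Your proposed induction has no decreasing quantity: $\Delta$ preserves the length of the normalized name, so inducting on $|N(\psi)|$ cannot work, and ``landing in the same finite orbit'' is not a well-founded order. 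Durand's finiteness does not rescue this. From $\Der(\mathbf{x}^{(m)})=\Der(\mathbf{x}^{(m+1)})\cup\{\mathbf{x}^{(m+1)}\}$ and finiteness you only get that the decreasing chain $\Der(\mathbf{x}^{(m)})$ stabilizes to some set $D$ with $\mathbf{x}^{(m)}\in D$ for large $m$; nothing forces $D\subseteq\{\mathbf{x}^{(j)}\colon j\geq 1\}$, so a ``phantom'' derivated word that is never exposed as a fixed point of some $\Delta^j(\psi)$ is not excluded by this argument.

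The paper closes the gap by running the recursion at the level of individual prefixes rather than at the level of the sets $\Der(\cdot)$, and the decreasing quantity is the length of the right special prefix. Concretely, the ``Moreover'' clauses of \Cref{prop:der_of_images_fi_b}\ref{it:der_of_images_fi_b_2} and \Cref{thm:preimages_fi_a} guarantee that a right special prefix $v$ of $\uu$ with $|v|>1$ is matched to a right special prefix $v'$ of $\mathbf{x}^{(1)}$ with $|v'|<|v|$ (the strict drop happens at the $\varphi_\beta$ step) and $\mathrm{d}_\uu(v)=\mathrm{d}_{\mathbf{x}^{(1)}}(v')$, while a prefix of length $1$ has derivated word exactly $\mathbf{x}^{(1)}$ (here one also needs the small observation that $|v|=1$ forces $k=0$, since for $k>0$ the first letter of $\uu$ is $1$ and is not right special). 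Iterating, every $\mathrm{d}_\uu(v)$ equals $\mathbf{x}^{(j)}$ for some $j\leq|v|$, which is the missing half of the theorem. To repair your write-up you should therefore keep track of the prefix correspondence (not just the set identity $\Der(\uu)=\Der(\mathbf{x}^{(1)})\cup\{\mathbf{x}^{(1)}\}$) and induct on $|v|$. Two further small points: your appeal to \Cref{lem:tvar_prefixu} is vacuous when $k=0$ (then $w$ starts with $\beta$, and $\uu$ starts with $1$ simply because $\varphi_\beta$-images do); and your argument that $\Delta(w)\notin\{a,\alpha\}^*$ should rather note that no rewriting rule applies to a word of $\{a,\alpha\}^*$, so $N(v)\in\{a,\alpha\}^*$ would force $v\in\{a,\alpha\}^*$, contradicting the presence of $\beta$ in $w'a^k\beta$.
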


\begin{proof}
	Denote $\mathbf{x}_j$ the fixed point of $\Delta^j(\psi), j = 1,2,\ldots$ and assume that $v$ is a right special prefix of $\uu$. 
	We will prove that if $|v| = 1$, then $\mathrm{d}_\uu(v) = \mathbf{x}_1$, and if $|v| > 1$, then there is a right special prefix $v'$ of $\mathbf{x}_1$ such that $|v'| < |v|$ and $\mathrm{d}_\uu(v) = \mathrm{d}_{\mathbf{x}_1}(v')$.
	We can repeat this proof for the prefix $v'$ of $\mathbf{x}_1$ and eventually prove that $\mathrm{d}_\uu(v) = \mathbf{x}_j$ for some $j$ and that for any $j$ there is a right special prefix $v$ of $\uu$ so that $\mathrm{d}_\uu(v) = \mathbf{x}_j$.

	Without loss of generality we assume that the normalized name of $\psi$ is $w= a^k\beta z$.
	This means that $\Delta(\psi) = \varphi_{z}\circ \varphi_{a^k\beta}$.

	First we assume $|v| = 1$.
	If $k > 0$, then the first letter of $\uu$ is $1$ which is not a right special factor.
	This implies that $k = 0$.
	Hence we have that $\uu = \varphi_\beta (\uu')$, where $\uu'= \varphi_{z}(\uu)$.
	By \Cref{it:der_of_images_fi_b_1} of \Cref{prop:der_of_images_fi_b} we obtain $\mathrm{d}_\uu(v) = \mathbf{\uu'}$.
	Lemma~\ref{lem:rotace_morfizmu} says the word $\uu'$ is fixed by the morphism $\varphi_{z}\circ \varphi_{\beta} = \Delta(\psi)$, which implies $\uu' = \mathbf{x}_1$.

	Now assume $|v| > 1$.
	If $k = 0$, then by \Cref{it:der_of_images_fi_b_2} of \Cref{prop:der_of_images_fi_b} there is a right special prefix $v'$ of $\uu'= \varphi_{z}(\uu)$ such that $|v'| < |v|$ and $\mathrm{d}_\uu(v) = \mathrm{d}_{\uu'}(v')$.
	Again by Lemma~\ref{lem:rotace_morfizmu} we obtain $\uu' = \mathbf{x}_1$.

	Let $k > 0$.
	For $i =0, 1, \ldots, k$ we define  $\uu^{(i)} = \varphi_{a^{k-i}\beta z}(\uu)$.
 	By Lemma~\ref{lem:tvar_prefixu}, the words $\uu^{(i)}$ all start with the letter $1$.
 	Obviously, $\uu^{(0)} = \uu$ and $\uu^{(i)} = \varphi_a\bigl(\uu^{(i+1)}\bigr)$ for $i =0,1,\ldots, k-1$.
 	By Theorem~\ref{thm:preimages_fi_a}, there are factors $v^{(i)}$ with $i =0, 1, \ldots, k$ such that
 	$v^{(i)}$ is a right special prefix of $\uu^{(i)}$, 
 	\[
 		|v| = |v^{(0)}| \geq |v^{(1)}| \geq |v^{(2)}| \geq \cdots \geq |v^{(k)}|
 	\]
 	and
 	\[
 		\mathrm{d}_{\uu}(v) = \mathrm{d}_{\uu^{(1)}}(v^{(1)}) = \mathrm{d}_{\uu^{(2)}}(v^{(2)}) \cdots = \mathrm{d}_{\uu^{(k)}}(v^{(k)})\, .
 	\]
 	Define $\uu' = \varphi_{z}(\uu)$.
	Then $\uu^{(k)} = \varphi_{\beta z}(\uu) = \varphi_\beta (\uu')$ and by \Cref{it:der_of_images_fi_b_2} of \Cref{prop:der_of_images_fi_b} there is a right special prefix $v'$ of $\uu'= \varphi_{z}(\uu)$ such that $|v'| < |v^{(k)}|$ and $\mathrm{d}_{\uu^{(k)}}(v^{(k)}) = \mathrm{d}_{\uu'}(v')$.
	According to Lemma \ref{lem:rotace_morfizmu}, the word $\uu'$ is fixed by the morphism $\varphi_{z}\circ \varphi_{a^k\beta} = \Delta(\psi)$.
	Thus, we have again proved that there is a prefix $v'$ of $\uu' = \mathbf{x}_1$ such that $|v'| < |v|$ and $\mathrm{d}_\uu(v) = \mathrm{d}_{\uu'}(v')$.
\end{proof}

\begin{remark}
	In Example \ref{example:degAnedeg} we considered the morphism $\psi = \varphi_w$, where $w = \beta \alpha aa \alpha$.
	We have found only five different morphisms $\Delta^i (\psi)$ for $i=1,\ldots,5$.
	The sixth morphism $\Delta^6 (\psi)$ already coincides with $\Delta^3 (\psi)$.
	As it follows from the proofs of Theorems \ref{thm:preimages_fi_b} and \ref{thm:main_result_vetsina}, the fixed points of $\Delta^3 (\psi)$, $\Delta^4 (\psi)$ and $\Delta^5 (\psi)$ represent the derivated words of $\uu$ to infinitely many prefixes of $\uu$.
	Whereas the fixed point of $\Delta (\psi)$ or $\Delta^2(\psi)$ is a derivated word of $\uu$ to only one prefix of $\uu$.
\end{remark}

\begin{example}\label{ex:derivated_words_of_Fibon}
	As explained in Example~\ref{fibonacci1}, to find the derivated words of the Fibonacci word we consider the morphism $\psi = \tau^2 =\varphi_b\varphi_\beta $.
	We have $\Delta(\psi) = \varphi_\beta\varphi_b$ and $\Delta^2(\psi) = \psi$.
	But these two morphisms are equal up to a permutation of letters, as $E\psi E = \Delta(\psi)$.
	This means that all derivated words of the Fibonacci word are the same and coincide with the Fibonacci word itself.
\end{example}

\subsection{Morphisms with two fixed points}

Let us now consider a Sturmian morphism $\psi$ which has two fixed points.
Let us denote $\uu^{(0)}$  and $\uu^{(1)}$ the fixed points of $\psi$ starting with $0$ and $1$, respectively. 
Clearly, $\psi(0)$ starts with $0$ and $\psi(1)$ with $1$.
Since the morphism $\psi$ has to belong to the monoid $\langle \varphi_a, \varphi_\alpha \rangle$, the transformation $\Delta$ cannot be applied on it. 
However, we will show that there is a morphism from $\langle \varphi_a, \varphi_\beta\rangle$ (or $ \langle \varphi_b, \varphi_\alpha\rangle$) with a unique fixed point $\mathbf{v}$ such that the set of derivated words of $\uu^{(0)}$ (or $\uu^{(1)}$) equals to $\{{\bf v}\} \cup {\rm Der}(\bf v)$.
And since $\mathbf{v}$ is a fixed point of some morphism from $\langle \varphi_a, \varphi_b, \varphi_\beta, \varphi_\alpha \rangle \setminus \langle \varphi_a, \varphi_\alpha \rangle$, the set ${\rm Der}(\bf v)$ can be described using \Cref{thm:main_result_vetsina}.

Here we give results only for the case when the normalized name $w  \in\{a,\alpha\}^*$ of the morphism begins with $a$. 
The case when the first letter is $\alpha$ is completely analogous.
It suffices to exchange $a \leftrightarrow b$ and $\alpha \leftrightarrow \beta$ in the statements and proofs.

\begin{lem}\label{lem:jinam} 
	Let $w \in\{a,\alpha\}^*$ be the normalized name of a morphism starting with the letter $a$.
	Then the normalized name $N(wb)$ has a prefix $b$ and a suffix $a$, the word $v=b^{-1}N(wb)$ belongs to $\{a,\beta\}^*$,  and $|v|_{\beta} = |w|_{\alpha}$.
\end{lem}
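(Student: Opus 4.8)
The plan is to analyze the normalization process $N(wb)$ explicitly by tracking how the rewriting rules \eqref{eq:relations} interact with the word $wb$, where $w \in \{a,\alpha\}^*$ begins with $a$. The key observation is that in $\{a,\alpha\}^*$, the word $w = N(w)$ is already normalized, so no rewriting can happen \emph{inside} $w$; any application of a rule must involve the newly appended letter $b$. By \Cref{lem:normalized_words}, a normalized word avoids the factors $\alpha a^k\beta$ and $a\alpha^kb$. Since $w$ ends in either $a$ or $\alpha$, the word $wb$ contains the factor $a\alpha^k b$ precisely when $w$ ends in $a\alpha^k$ for some $k \geq 0$ (where $k=0$ means $w$ ends in $a$). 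So the rewriting rule $a\alpha^k b = b\beta^k a$ applies to the suffix.

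\textbf{The induction.} First I would set up an induction on $|w|_\alpha$, or equivalently peel off maximal blocks from the right. Write $w = w' a \alpha^{k}$ where $\alpha^k$ is the maximal suffix of $\alpha$'s (so $k\geq 0$, and $w'a$ is the part before it; note $w'$ might be empty only if $w = a\alpha^k$). Then $wb = w'a\alpha^k b \to w' b\beta^k a$. Now $w'$ is still in $\{a,\alpha\}^*$, normalized, and ends in $a$ or $\alpha$ — but now it is followed by $b\beta^k a$, which begins with $b$. The only rule that could fire next is $a\alpha^j b = b\beta^j a$ applied to a suffix of $w'$ together with this new $b$: if $w'$ ends in $a\alpha^j$ we get another rewrite $w'' a\alpha^j b \beta^k a \to w'' b \beta^j a \beta^k a$. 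The crucial point is that each such step converts one Latin $a$ (plus a block of $\alpha$'s) into a $b$ (plus a block of $\beta$'s) and pushes the leading $b$ further left, while the trailing $a$'s accumulate at the right end. Iterating, since $w$ starts with $a$ and that leading $a$ will eventually be the one that gets converted to the final leading $b$, the process terminates with a word of the form $b \,(\text{something in }\{a,\beta\}^*)\, a$ — the leading $b$ coming from converting the first letter of $w$, and a trailing $a$ from the last conversion. To be careful one should check that $N(wb)$ is indeed this normalized word, i.e. that no further rules apply: after all conversions the word is $b \beta^{k_1} a \beta^{k_2} a \cdots \beta^{k_r} a$ interleaved appropriately, which contains no $\alpha$ at all, hence no factor $\alpha a^j \beta$, and no factor $a\alpha^j b$ since there are no $\alpha$'s; so it is normalized by \Cref{lem:normalized_words}.

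\textbf{Extracting the three conclusions.} Once the shape of $N(wb)$ is pinned down as $b\,v'\,a$ with $v' \in \{a,\beta\}^*$, the three claims follow: (1) $N(wb)$ has prefix $b$ and suffix $a$ — immediate from the description; (2) $v = b^{-1}N(wb) = v' a \in \{a,\beta\}^*$ — also immediate since $v'$ and the trailing $a$ are in $\{a,\beta\}^*$; (3) $|v|_\beta = |w|_\alpha$ — this needs the bookkeeping observation that the rewriting rule $a\alpha^j b = b\beta^j a$ is length-preserving and, more importantly, preserves the multiset of "Greek positions": by the remark preceding \Cref{def:delta}, rules \eqref{eq:relations} preserve which positions are occupied by Greek letters. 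The word $wb$ has $|w|_\alpha$ Greek letters (all $\alpha$), and $N(wb)$ has the same number of Greek positions, now all occupied by $\beta$ (since the normalized form has no $\alpha$). The single appended letter $b$ is Latin and stays Latin. Hence $|N(wb)|_\beta = |w|_\alpha$, and since the leading $b$ we strip off is Latin, $|v|_\beta = |N(wb)|_\beta = |w|_\alpha$.

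\textbf{Main obstacle.} The delicate part is verifying rigorously that the greedy left-to-right cascade of rewrites actually produces the \emph{lexicographically greatest} reachable word, i.e. that it equals $N(wb)$ and not merely \emph{some} word equivalent to $wb$. I would handle this by the characterization in \Cref{lem:normalized_words}: it suffices to show the resulting word is normalized (contains no forbidden factor), because $N(wb)$ is the unique normalized word in the equivalence class of $wb$ (uniqueness follows from \Cref{thm:relations} together with the fact that each equivalence class contains exactly one normalized element — which is implicit in the definition of $N$). Since the cascade eliminates all $\alpha$'s, the absence of forbidden factors is essentially automatic, as noted above. The only remaining care is the boundary bookkeeping when $w$ itself is a single block $a\alpha^k$ or when intermediate $w'$ becomes empty; these edge cases should be checked directly but present no real difficulty.
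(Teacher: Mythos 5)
Your proof is correct and takes essentially the same route as the paper, which likewise decomposes $w$ into blocks of the form $a^k\alpha^\ell$ and inducts on their number (the paper's own proof is just a two-line sketch of the cascade you spell out, with the single-block case $N(a^k\alpha^\ell b)=ba^{k-1}\beta^\ell a$ as the base). One small point to tighten: when verifying that the final word avoids the forbidden factors $a\alpha^jb$, the case $j=0$ (the factor $ab$) is not excluded by "there are no $\alpha$'s" — it is excluded because the unique $b$ in the resulting word is its first letter.
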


\begin{proof} 
	First, we consider the special case when $w=a^k\alpha^\ell$, with $k \geq 1$ and $\ell \geq 0$.
	By the relation \eqref{eq:relations}, $N(wb)=ba^{k-1}\beta^\ell a$ and the statement is true. 

	Let $w \in\{a,\alpha\}^*$ be arbitrary.
	It can be decomposed to several blocks of the form $a^k\alpha^\ell$ with $k \geq 1$, $\ell \geq 0$.
	Now the proof can be easily finished by induction on the number of these blocks.
\end{proof}

\begin{prop}\label{lem:revers_of_standard}
	Let $w \in\{a,\alpha\}^*$ be the normalized name of a primitive morphism $\psi$ and let $a$ be its first letter.
	\begin{enumerate}[(i)]
		\item Let $\uu$ be the fixed point of $\psi$ starting with $0$.
		Denote $v =b^{-1}N(wb)\in \{a,\beta\}^*$.
		Then $ {\rm Der}(\uu) = \{{\bf v}\} \cup {\rm Der}(\bf v)$, where ${\bf v }$ is the unique fixed point of the morphism $\varphi_v$\,.

		\item Let $\uu$ be the fixed point of $\psi$ starting with $1$.
		Put $v ={\rm cyc} (w)$ (see\eqref{eq:def_of_cyc}).
		Then ${\rm Der}(\uu) = {\rm Der}({\bf v})$, where ${\bf v }$ is the fixed point of the morphism $\varphi_v$.
	\end{enumerate}
\end{prop}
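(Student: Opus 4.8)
The plan is to reduce each item to a single application of the results of \Cref{sec:der_wo_St_preim}, the point being that post-composing $\psi$ with $\varphi_b$ (item (i)) or pre-composing with $\varphi_a$ and then cyclically shifting (item (ii)) produces a morphism whose normalized name is no longer in $\{a,\alpha\}^*$, so that \Cref{thm:preimages_fi_b} or \Cref{thm:preimages_fi_a} applies.

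For item (i), I would first record, via \Cref{lem:jinam}, that $N(wb)=bv$ where $v=b^{-1}N(wb)\in\{a,\beta\}^*$ ends in $a$ and satisfies $|v|_\beta=|w|_\alpha$. Since $\psi$ is primitive and $w\in\{a,\alpha\}^*$, \Cref{lem:properties_of_sturm_morph}(i) gives $|w|_\alpha\geq1$, hence $v$ contains both a Latin and a Greek letter and, not lying in $\{a,\alpha\}^*$, the morphism $\varphi_v$ is primitive with a unique fixed point $\mathbf{v}$ (\Cref{lem:properties_of_sturm_morph}). The heart of the argument is the chain of equalities
\[
\psi\circ\varphi_b=\varphi_{wb}=\varphi_{N(wb)}=\varphi_{bv}=\varphi_b\circ\varphi_v,
\]
the middle step being \Cref{thm:relations}. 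Applying both sides to $\mathbf{v}$ yields $\psi(\varphi_b(\mathbf{v}))=\varphi_b(\varphi_v(\mathbf{v}))=\varphi_b(\mathbf{v})$, so $\varphi_b(\mathbf{v})$ is a fixed point of $\psi$; as $\varphi_b$ always produces a word beginning with $0$, this fixed point is $\uu$, i.e. $\uu=\varphi_b(\mathbf{v})$. Then \Cref{thm:preimages_fi_b} gives $\mathrm{Der}(\uu)=\mathrm{Der}(\mathbf{v})\cup\{\mathbf{v}\}$. (If one prefers not to use that a fixed point of a primitive Sturmian morphism is Sturmian, write $w=aw'$, apply \Cref{lem:prevod_fi_a_na_fi_b} to $\uu=\varphi_a(\varphi_{w'}(\uu))$ to get $\uu=\varphi_b(\mathbf{z})$ with $\mathbf{z}$ Sturmian, and deduce $\mathbf{z}=\varphi_v(\mathbf{z})$ from the displayed identity and injectivity of $\varphi_b$.)

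For item (ii), write $w=aw'$; primitivity forces $w'\neq\varepsilon$, and $v=\mathrm{cyc}(w)=w'a$, so that $\psi=\varphi_a\circ\varphi_{w'}$ while $\varphi_v=\varphi_{w'}\circ\varphi_a$. I would set $\mathbf{y}=\varphi_{w'}(\uu)$, which is Sturmian (the image of a Sturmian word under a Sturmian morphism) and satisfies $\uu=\psi(\uu)=\varphi_a(\mathbf{y})$. Since $\uu$ starts with $1$, \Cref{thm:preimages_fi_a} applies and gives both that $\mathbf{y}$ starts with $1$ and that $\mathrm{Der}(\uu)=\mathrm{Der}(\mathbf{y})$. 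Finally, \Cref{lem:rotace_morfizmu} with $\xi=\varphi_a$ and $\eta=\varphi_{w'}$ shows $\mathbf{y}$ is the fixed point of $\eta\circ\xi=\varphi_v$; as $\varphi_v$ has two fixed points (its normalized name lying in $\{a,\alpha\}^*$) and $\mathbf{y}$ starts with $1$, $\mathbf{y}$ is exactly the fixed point $\mathbf{v}$ of $\varphi_v$ starting with the letter $1$. Hence $\mathrm{Der}(\uu)=\mathrm{Der}(\mathbf{v})$.

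I do not expect a serious obstacle: the only nonroutine ingredient is the normalization fact $N(wb)=bv$ with $v\in\{a,\beta\}^*$ underlying item (i), and that is precisely \Cref{lem:jinam}. Everything else is bookkeeping — checking primitivity and the number of fixed points of $\varphi_v$ from \Cref{lem:properties_of_sturm_morph}, pinning down which fixed point equals $\uu$ (resp. $\mathbf{y}$) via its initial letter, and invoking \Cref{thm:preimages_fi_b,thm:preimages_fi_a}.
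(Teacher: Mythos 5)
Your proposal is correct and follows essentially the same route as the paper: both hinge on the identity $\varphi_{wb}=\varphi_{N(wb)}=\varphi_{bv}$ followed by \Cref{thm:preimages_fi_b} for item (i), and on \Cref{thm:preimages_fi_a} together with \Cref{lem:rotace_morfizmu} for item (ii). The only cosmetic difference is that in (i) you start from the unique fixed point of $\varphi_v$ and check that $\varphi_b(\mathbf{v})$ is the fixed point of $\psi$ beginning with $0$, whereas the paper desubstitutes $\uu$ via \Cref{lem:prevod_fi_a_na_fi_b} and deduces $\mathbf{v}=\varphi_v(\mathbf{v})$ by injectivity of $\varphi_b$ --- which is exactly the alternative you mention in parentheses.
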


\begin{proof}
	Let us start with proving $(i)$.
	Let ${\bf v}$ be the infinite word given by Lemma \ref{lem:prevod_fi_a_na_fi_b}.
	Then
	\[
		\varphi_b({\bf v}) = \uu =  \psi(\uu) = \varphi_w(\uu) =  \bigl(\varphi_w \circ \varphi_b\bigr)({\bf v}) = \varphi_{wb}({\bf v})=  \varphi_{N(wb)}({\bf v}) \,.
	\]
	By definition of $v$ we have $N(wb) = bv$ and thus
	\[
		\varphi_b({\bf v}) = \varphi_{bv}({\bf v}) = \varphi_b \bigl( \varphi_v({\bf v}) \bigr).
	\]
	This implies that ${\bf v} = \varphi_v({\bf v}) \,.$
	Since $v \notin \{ a, \alpha \}^*$, the morphism $\varphi_v$ has a unique fixed point, namely the word $\bf v$.
 	By Theorem \ref{thm:preimages_fi_b}, ${\rm Der}(\uu) = \{{\bf v}\} \cup {\rm Der}(\bf v)$ as stated in~$(i)$.

	Statement~$(ii)$ is a direct consequence of Theorem~\ref{thm:preimages_fi_a} and Lemma~\ref{lem:rotace_morfizmu}.
\end{proof}

%
%
\section{Bounds on the number of derivated words} \label{sec:number_der_wo}
In this section we study the relation between the normalized name $w$ of a primitive  morphism $\psi = \varphi_w$ and the number of distinct  return words to its fixed point. 
We restrict ourselves to the case when $w \notin \{a, \alpha\}^*$, as the case  $w \in \{a, \alpha\}^*$  is treated in the next section.

Theorem \ref{thm:main_result_vetsina} says that  the number of derivated words of $\uu$ cannot exceed the upper bound:
\[
	\text{number of distinct words in the sequence } \left(\Delta^k(w)\right)_{k \geq 1}.
\]
Since the words $\Delta^k(w) \in \{a,b,\alpha,\beta\}^*$ are all of the same length and $\Delta^{k+1}(w)$ is completely determined by $\Delta^{k}(w)$, the sequence $\left(\Delta^k(w)\right)_{k \geq 1}$ is eventually periodic. 

The number of distinct elements in $\left(\Delta^k(w)\right)_{k \geq 1}$ is only an upper bound on the number of derivated words of $\uu$.  
As we have already mentioned in Remark \ref{nerozlisuj}, fixed points of morphisms corresponding to the names $v$ and $F(v)$ coincide up to exchange of letters $0$ and $1$ and hence define the same derivated word. 
On the other hand, if $v$ and $v'$ are normalized names with $|v| = |v'|$ and fixed points of $\varphi_{v}$ and $\varphi_{v'}$ coincide (up to exchange of letters), then either $v'=v$ or $ v'= F(v)$.

First we look at two examples that illustrate some special cases of the general \Cref{prop:upper_bound} on the period and preperiod of the sequence $\bigl(\Delta^k(w)\bigr)_{k\geq 1}$.
\begin{example}\label{exa:pomoc6}
Consider a word $w$ of length $n$ in the form $w=b^{n-2}\beta a$.
The sequence of $\bigl(\Delta^k(w)\bigr)_{k\geq 1}$ is eventually periodic.
Its preperiod equals $n-2$ and is given by the words $b^{n-k} \beta b^{k-2} a$, for $ k=3,4,\ldots,n$.
The period equals $n-1$ and is given by the words $b^{n-k} a\beta b^{k-2} $, for $ k=2,3, \ldots,n$.

Let us stress that for any $v \in  \{a,b,\alpha,\beta\}^*$   the equation  $v' = F(v)$ implies $|v|_a = |v'|_\alpha$ and
$|v|_b = |v'|_\beta$. Since all words $\Delta^k(w)$ we listed above contain one letter $a$ and no letter $\alpha$, we can conclude that  the morphism $\varphi_w$ has $2n-3$ distinct derivated words.
\end{example}

\begin{example}\label{exa:pomoc7}

Consider a normalized name $w$ in which the letter $b$ is missing and $w$ contains all the three remaining letters.
Necessarily $w$ has the form
\[
	\beta^{\ell_1} a^{k_1}\beta^{\ell_2}a^{k_2} \cdots \beta^{\ell_s}a^{k_s}\alpha^j,
\]
where $s\geq 1$, $\ell_i\geq 1$ for all $i= 2,\ldots ,s$ and  $k_i\geq 1$ for all $i=1,2,\ldots ,s-1$ and $j\geq 1$.
It is easy to see that the normalized names of words obtained by repeated application of the mapping $\Delta$ are
\[
	\Delta^{\ell_1}(w)= a^{k_1}\beta^{\ell_2}a^{k_2} \cdots \beta^{\ell_s}a^{k_s}\beta^{\ell_1} \alpha^j\qquad \text{and} \qquad \Delta^{\ell_1+1}(w)= \beta^{\ell_2-1}a^{k_2} \cdots \beta^{\ell_s}a^{k_s}\beta^{\ell_1+1}\alpha^{j-1} b^{k_1}\alpha
\]
We see that the $(\ell_1+1)^{st}$  iteration already contains all four letters.
\end{example}

\begin{prop}\label{prop:upper_bound}
	Let $w \in \{a,b,\alpha,\beta\}^* \setminus \{a,\alpha\}^*$ be the normalized name of a primitive Sturmian morphism $\psi =\varphi_w$.
	Then the sequence $\bigl(\Delta^k(w)\bigr)_{k\geq 1}$ is eventually periodic and:
	\begin{enumerate}[(i)]
		\item If it is purely periodic, then its period is at most $|w|$, otherwise, its period is at most $|w| -1$.
		\item If both $b$ and $\beta$ occur in $w$, then the preperiod is at most $|w|-2$, otherwise the preperiod is at most $2|w|-3$.
	\end{enumerate}
\end{prop}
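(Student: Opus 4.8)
Set $n=|w|$. As already observed just before the statement, $\Delta$ carries a normalized name of length $n$ to a normalized name of length $n$, and $\Delta^{k+1}(w)$ is determined by $\Delta^{k}(w)$; moreover the rules \eqref{eq:relations} and the cyclic shift both preserve ``contains a Latin letter, a Greek letter, and a letter from $\{b,\beta\}$'', so by \Cref{lem:properties_of_sturm_morph} the whole orbit stays among normalized names of primitive morphisms not in $\langle\varphi_a,\varphi_\alpha\rangle$. Hence $\Delta$ is a self-map of a finite set and $(\Delta^k(w))_{k\ge1}$ is eventually periodic; write $q$ for its preperiod and $p$ for its period.

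The engine of the proof is the monovariant $\mu(v)=|v|_a+|v|_\alpha$. A cyclic shift leaves $\mu$ unchanged, and each use of a rule in \eqref{eq:relations} changes $\mu$ by $-k$, where $k$ is the exponent in that rule; hence $\mu(\Delta(v))\le\mu(v)$, with equality precisely when only the transpositions $\alpha\beta\to\beta\alpha$ and $ab\to ba$ (the $k=0$ cases) are used. I will use three consequences. First, along the periodic part $\mu$ is constant, say with value $\mu_\infty$. Second, a rule with $k\ge1$ requires both an $a$ and an $\alpha$ in the current word, so once $\mu$ is positive it never drops below $1$; together with the fact that on $\{b,\beta\}^*$ the map $\Delta$ is simply the rotation by one letter (no rule can be triggered), this yields: $\mu_\infty=0$ forces $\mu(w)=0$, and then the orbit of $w$ is exactly the set of cyclic conjugates of $w$, so the sequence is purely periodic. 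Third, since $w\notin\{a,\alpha\}^*$ it contains a $b$ or a $\beta$, so $\mu(w)\le n-1$; if it contains both, then $|w|_a\le|w|_a+|w|_b-1$ and $|w|_\alpha\le|w|_\alpha+|w|_\beta-1$, whence $\mu(w)\le n-2$.

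\emph{Bound on the period.} The key step is that along the periodic part $\Delta$ acts as a cyclic rotation. By the first consequence only transpositions are used there, and a closer look at the shape of normalized names — which blocks a transposition can sit at, and what happens when the prefix it lies in is later removed by $\Delta$ — shows that in fact no transposition is triggered on a returning orbit; I expect this bookkeeping to be the main technical obstacle. Granting it, all words of the periodic part are cyclic conjugates of one word $\nu$ of length $n$, so $p$ is at most the number of distinct cyclic conjugates of $\nu$, i.e. $p\le n$. Suppose $p=n$; then $\nu$ is primitive and the periodic part runs through all $n$ of its cyclic conjugates, with $\Delta$ acting on them bijectively. If $\nu$ had a letter $a$ or $\alpha$ at some position $i$, then by \Cref{lem:normalized_words} the conjugate starting at $i$ would begin with $a^{m}\beta$ or $\alpha^{m}b$ for some $m\ge1$; $\Delta$ would shift it by $m+1$, landing on the conjugate starting at position $i+m+1$, which is exactly the image of the conjugate starting at position $i+m$ (the latter begins with the single letter $\beta$ or $b$, hence is shifted by $1$). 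Bijectivity forces the conjugates starting at $i$ and at $i+m$ to coincide, hence $m\equiv0\pmod n$, impossible for $1\le m\le n-1$. Thus $\nu\in\{b,\beta\}^*$, i.e. $\mu_\infty=0$, so by the second consequence the sequence is purely periodic. Contrapositively, a nonempty preperiod forces $p\le n-1$.

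\emph{Bound on the preperiod.} If $w$ contains both $b$ and $\beta$, then $\mu(w)\le n-2$; bounding $q$ by $n-2$ needs a refined count of the pre-cycle $\Delta$-steps: those that lower $\mu$ number at most $\mu(w)-\mu_\infty$, and the $\mu$-preserving ones only apply the transpositions $\alpha\beta\to\beta\alpha$, $ab\to ba$, which terminate at least as fast as in the extremal \Cref{exa:pomoc6}, where a $\beta$ is pushed to the front of the word in exactly $n-2$ steps. If $w$ contains exactly one of $b,\beta$, then by the symmetry $F$ of \eqref{eq:definition_of_F} we may assume it contains a $\beta$ but no $b$; then $w\in\{a,\alpha,\beta\}^*$, and by \Cref{lem:normalized_words} $w$ does not begin with $\alpha$, hence begins with $a^{k}\beta$. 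One checks, as in \Cref{exa:pomoc7}, that within at most $n-1$ applications of $\Delta$ a letter $b$ is produced; since $|v|_\beta$ never decreases under $\Delta$ and is at least $1$, the word so reached contains both $b$ and $\beta$, and the previous case bounds the number of further steps by $n-2$. Altogether $q\le(n-1)+(n-2)=2n-3$. The parts whose details I expect to be delicate are the exclusion of transposition steps along the cycle (needed for $p\le n$) and the sharp count $q\le n-2$ in the ``$b$ and $\beta$ present'' case; the rest is routine.
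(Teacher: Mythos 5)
Your setup is sound: eventual periodicity is immediate since $\Delta$ maps the finite set of normalized names of length $|w|$ to itself, the quantity $\mu(v)=|v|_a+|v|_\alpha$ is indeed non-increasing under $\Delta$ (each normalizing application of a rule from \eqref{eq:relations} with exponent $k$ lowers $\mu$ by $k$ and leaves $\mu\geq 1$ if it was positive), and your bijectivity argument showing that a full orbit of length $|w|$ of cyclic conjugates forces $\nu\in\{b,\beta\}^*$ is correct as far as it goes. But the proof has two genuine gaps, and the first one sits exactly at the heart of the proposition. Your monovariant is blind to the $k=0$ rewritings $\alpha\beta\to\beta\alpha$ and $ab\to ba$, so ``$\mu$ constant on the cycle'' does not give you that $\Delta$ acts there as a pure cyclic rotation; you state this yourself (``I expect this bookkeeping to be the main technical obstacle \dots Granting it'') and never supply the bookkeeping. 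Everything in your period bound is conditional on that unproven claim. The paper closes precisely this point by a suffix analysis: if the normalized name ends in $b$ or $\beta$, then moving the prefix $a^i\beta$ (or $\alpha^i b$) to the end creates no factor $\alpha a^j\beta$ or $a\alpha^j b$ across the junction, so no rule (including $k=0$) fires, $\Delta$ is literally the rotation, and the last letter stays in $\{b,\beta\}$; a separate three-subcase analysis of the longest suffix in $\{a,\alpha\}^*$ shows every orbit eventually reaches such a word. Some argument of this kind is indispensable and cannot be extracted from $\mu$ alone.

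The preperiod bound has the same problem in sharper form. Your accounting allows up to $\mu(w)-\mu_\infty\leq |w|-2$ strictly $\mu$-decreasing steps \emph{plus} an uncontrolled number of $\mu$-preserving transposition steps before the cycle, and the phrase ``terminate at least as fast as in the extremal \Cref{exa:pomoc6}'' is an appeal to an example, not a proof; as written the two contributions do not sum to $|w|-2$. The paper instead bounds the number of pre-periodic iterations by the number of letters $\beta$ occurring before the first $b$ (at most $|w|-2$ when both $b$ and $\beta$ are present), by tracking how the suffix $\alpha^m$ of each iterate evolves, and handles the $b$-free case via the explicit computation of \Cref{exa:pomoc7} to reach a word containing both $b$ and $\beta$ within $\ell_1+1\leq |w|-1$ steps. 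So while your invariant $\mu$ is a nice organizing device and correctly explains why pure periodicity is tied to $\mu_\infty=0$, both quantitative claims of \Cref{prop:upper_bound} remain unproven in your write-up; the missing ingredient in each case is the structural control of the suffix of $\Delta^k(w)$ that the paper's case analysis provides.
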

\begin{proof}

By Lemma \ref{lem:normalized_words}, the word $w$ (and all the elements of the sequence $\bigl(\Delta^k(w)\bigr)_{k\geq 1}$) has the  form  $w =a^i\beta w' $ or  $w =\alpha^ib w' $ for some $ i\geq 0$.
In this proof we distinguish three cases such that exactly one of them is valid for all $\Delta^k(w), k = 1,2, \ldots$ 
The first two cases correspond to the ``periodic'' part of the sequence $\bigl(\Delta^k(w)\bigr)_{k\geq 1}$.

{\bf Case 1}: If $w$ has a suffix $\beta$ or $b$, then the word $\Delta(w)$ equals to $w' a^i\beta $  or  $w' \alpha^ib$ and thus  has again a suffix $\beta$ or $b$.
Indeed, since $N(w) = w$, the words $\alpha a^j \beta$ and $a \alpha^j b$ are not factors of $w$ and so they are not even factors of $w'$.
As the last letter of $w'$ is $b$ or $\beta$, neither $\alpha a^j \beta$ nor $a \alpha^j b$ is a factor of $w' a^i\beta$ and hence $\Delta(w) = N(w' a^i\beta) = w' a^i\beta$.  
This means that for any $k$ the word $\Delta^k(w)$ is just a cyclic shift of $w$ (see\eqref{eq:def_of_cyc}).
Therefore,  $\bigl(\Delta^k(w)\bigr)_{k\geq 1}$ is purely periodic and its period is given by the number of letters $\beta$ and $b$ in $w$ which is clearly at most $|w|$.
Moreover, the word $w$ belongs to the sequence  $\bigl(\Delta^k(w)\bigr)_{k\geq 1}$ and the fixed point $\uu$ of $\psi$ itself is a derivated word of $\uu$.

\medskip

Without loss of generality we assume that $w =a^i\beta w'$; the case of $w =\alpha^ib w'$ can be treated in the same way, it suffices to exchange letters $a \leftrightarrow b$ and $\alpha \leftrightarrow \beta$. 
Denote $p$ the longest suffix of $w$ such that $p \in \{a,\alpha\}^*$. 
It remains to consider only the case of nonempty $p$.

\medskip

{\bf Case 2}: If $p=a^j$ for some $j\geq 1$, then $w'$ has a suffix $ba^j$ or $\beta a^j$.
No rewriting rule from~\eqref{eq:relations} can be applied to $w'a^i\beta$, hence, $\Delta(w)= w'a^i\beta$ has a suffix $\beta$. 
So, we can apply the reasoning from Case 1 on the word $\Delta(w)$ and hence the sequence $\bigl(\Delta^k(w)\bigr)_{k\geq 1}$ is purely periodic.
As $w$ contains at least one letter $a$ as a suffix, the period is shorter than $|w|$ and $w$ itself does not occur in $\bigl(\Delta^k(w)\bigr)_{k\geq 1}$.

\medskip

{\bf Case 3}: Now assume that the letter $\alpha$ occurs in $p$.  
We split this case into three subcases and show that if one of these subcases is valid for a word $\Delta^k(w)$, then this word belongs to the ``preperiodic'' part of $\bigl(\Delta^k(w)\bigr)_{k\geq 1}$.
These three subcases (for word $w$) read:\\
\noindent (i)  $w$ begins with the letter $a$, i.e., $i\geq 1$;\\
\noindent (ii) $w$ has a prefix $\beta$ and $p$ has a factor $\alpha a$;\\
\noindent (iii) $w$ has a prefix $\beta$ and $p = a^j\alpha^s$ for $j\geq 0$ and $s\geq 1$.

\medskip

\textbf{(i)} Since we assume that $\alpha$ occurs in $p$, a suffix of $p$ has a form $\alpha a^t$ for some $t \geq 0$. 
It follows that $w'a^i\beta$, has a suffix $\alpha a^{t+i}\beta$.
After applying the rewriting rules~\eqref{eq:relations} to $w'a^i\beta$ we obtain the normalized name $\Delta(w)$ which has a suffix $b\alpha$.

\textbf{(ii)} A suffix of $w$ can be expressed in the form $\alpha a^r\alpha^sa^t$, where $r\geq 1$  and $s, t\geq 0$.
Therefore $w'\beta$ has a suffix $\alpha a^r\alpha^sa^t\beta$. 
After normalization we get that $\Delta(w)$ has a suffix in the form of $b\alpha^\ell$ for some $\ell\geq 1$.

 \textbf{(iii)} As $w = \beta w'$ has a suffix $\beta a^j\alpha^s$ or $ba^j\alpha^s$, the word $N(w'\beta)$ has a suffix $ \beta \alpha^s$.

\medskip

All the three discussed subcases share the following property: The longest suffix $p'\in \{a,\alpha\}^*$ of the normalized name $v = \Delta(w)$ is of the form $p'=\alpha^m$, for some $m\geq 1$.
It means that Case 3 (ii) is not applicable in the second iteration of $\Delta$.

 By \Cref{lem:normalized_words}, the word $v$ has a prefix $a^n\beta $ or $\alpha^n b, n \geq 0$.

 If the prefix of $v$ is of the form $\alpha^n b$, then the word $\Delta(v) = \Delta^2(w)$ belongs to Case 2. 
 This means that $v$ is the last member of the preperiodic part.

 If the prefix of $v$ is of the form $a^n\beta$, then we must apply either Case 3 (i) or  3 (iii)   which means that $\alpha$ is again a suffix of the word obtained in the next iteration of $\Delta$.

Let us give a bound on the number of times that we have to use Case 3 (i) or 3 (iii) before we reach Case 2.

 If $w$ contains both $\beta$ and $b$, then the number of times of using Case 3 (i) or 3 (iii) is at most the number of letters $\beta$ occurring in $w$ before the first occurrence of $b$.
 Thus there are at most $|w|-2$ such letters since $w$ contains $\beta$, $b$ and $\alpha$.

If $w$ does not contain $b$, then $w$ must contain besides the letters $\beta$ and $\alpha$ also the letter $a$; otherwise the morphism $\varphi_w$ would be not primitive (see \Cref{lem:properties_of_sturm_morph}).
The word $w$ has a form described in Example~\ref{exa:pomoc7} and thus $\Delta^{\ell_1+1}(w)$ contains both letter $b$ and $\beta$ (for the meaning of $\ell_1$ see Example~\ref{exa:pomoc7}).
For this word we can apply the reasoning from the previous paragraph, meaning that after $\ell_1+1$ iterations we need at most $|w| -2$ further iterations before reaching the periodic part of $\bigl(\Delta^k(w)\bigr)_{k\geq 1}$. 
Since $\ell_1\leq |w|-2$, we get that the preperiod is at most $2|w|-3$.
\end{proof}

 Example \ref{exa:pomoc6} illustrates that in the  case that $w$ contains the letters $b$ and $\beta$ the upper bounds on preperiod and period provided by the previous proposition are attained. 
 The following example proves that the bound from Proposition \ref{prop:upper_bound} for $w$ which does not contain both letters $b$ and $\beta$ is attained as well.

\begin{example} 
Let us consider the normalized name $w =\beta^{n-2}a\alpha$. 
It is easy to evaluate iterations of the operator $\Delta$:
\begin{align*}
	\Delta^{n-2}(w) & =  a\beta^{n-2}\alpha \\
	\Delta^{n-1}(w) & =  \beta^{n-2}b\alpha \\
	\Delta^{2n-3}(w) & = b\beta^{n-2}\alpha \\
	\Delta^{2n-2}(w) & = \beta^{n-2}\alpha b \quad  \text{--- the first member of the periodic part of $\bigl(\Delta^k(w)\bigr)_{k\geq 1}$} \\
	\Delta^{3n-4}(w) & = \alpha b\beta^{n-2} \quad  \text{--- the last member of the periodic part of $\bigl(\Delta^k(w)\bigr)_{k\geq 1}$}\\
	\Delta^{3n-3}(w) & = \Delta^{2n-2}(w).
\end{align*}
\end{example}

In \Cref{ex:derivated_words_of_Fibon} we showed that for the Fibonacci word the derivated words to all prefixes coincide.
There are infinitely many words with this property:
\begin{example}
	Consider $w=a^{n-1}\beta$ and the morphism $\psi = \varphi_w$.
	Then  $\Delta(\psi)  = \psi$ and thus the fixed point $\uu$ of $\psi$ is the derivated word to any prefix of $\uu$.
\end{example}

Combining \Cref{prop:upper_bound} and the last two examples we can give an upper and lower bound on the number of distinct derivated words.
\begin{coro}\label{coro:bounds_on_nr_of_der_words} 
Let $w \in \{a,b,\alpha,\beta\}^* \setminus \{a,\alpha\}^*$ be normalized name of a primitive Sturmian morphism $\psi =\varphi_w$  and $\uu$ be  a fixed point of $\psi$. 
Then
\begin{equation}\label{meze}
	1\leq \#\Der(\uu) \leq  3|w| -4\,.
\end{equation}
Moreover, for any length $n\geq 2$ there exist normalized names $w', w'' \in \{a,b,\alpha,\beta\}^* \setminus \{a,\alpha\}^*$ of length $n$ such that
\begin{enumerate}[(i)]
	\item $\varphi_{w'}$ and $\varphi_{w''}$ are not powers of other Sturmian morphisms,
	\item for the fixed points $\uu'$ and $\uu''$ of the morphism $\varphi_{w'}$ and $\varphi_{w''}$,  the lower resp. the upper bound in \eqref{meze} is attained.
\end{enumerate}
\end{coro}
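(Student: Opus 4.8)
The plan is to deduce both inequalities in \eqref{meze} directly from \Cref{thm:main_result_vetsina} and the preperiod/period bounds of \Cref{prop:upper_bound}, and then to obtain the sharp families $w',w''$ from the two examples immediately preceding the statement. For the right-hand inequality I would use \Cref{thm:main_result_vetsina} to bound $\#\Der(\uu)$ by the number of pairwise distinct words occurring in $\bigl(\Delta^k(w)\bigr)_{k\geq1}$; writing $p$ and $q$ for the preperiod and period of this sequence, that number is exactly $p+q$. If $p=0$, then \Cref{prop:upper_bound} gives $q\leq|w|$, and since \Cref{lem:properties_of_sturm_morph} forces a primitive $\varphi_w$ to use both a Latin and a Greek letter we have $|w|\geq2$, hence $p+q\leq|w|\leq 3|w|-4$. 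If $p\geq1$, then \Cref{prop:upper_bound} gives $q\leq|w|-1$ and $p\leq 2|w|-3$, so $p+q\leq 3|w|-4$. The left-hand inequality is immediate: $\uu$ is uniformly recurrent (its morphism being primitive), so $\Der(\uu)$ contains at least the derivated word of the first letter of $\uu$ and is nonempty.

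For sharpness of the lower bound I would take $w'=a^{n-1}\beta$; as observed in the example just before the statement, $\Delta(\varphi_{w'})=\varphi_{w'}$, so $\bigl(\Delta^k(w')\bigr)_{k\geq1}$ is constant and, by \Cref{thm:main_result_vetsina}, $\Der(\uu')=\{\uu'\}$, i.e. $\#\Der(\uu')=1$. For sharpness of the upper bound and $n\geq3$ I would take $w''=\beta^{n-2}a\alpha$; the computation of the iterates of $\Delta$ in the other example just before the statement shows that $\bigl(\Delta^k(w'')\bigr)_{k\geq1}$ has preperiod $2n-3$ and period $n-1$, hence $3n-4$ distinct terms. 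To turn this into $\#\Der(\uu'')=3n-4$ I would combine \Cref{thm:main_result_vetsina} with the discussion preceding \Cref{exa:pomoc6}, namely that two normalized names of equal length give the same derivated word only if one is the $F$-image of the other; this is impossible among the $\Delta^k(w'')$, each of which has exactly one Latin letter and $n-1\geq2$ Greek letters, while $F$ interchanges these two counts. The boundary case $n=2$ I would dispose of by inspecting the finitely many admissible normalized names of length $2$.

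It remains to establish item (i). I would suppose $\varphi_w=\eta^k$ with $k\geq2$, $\eta$ Sturmian, and $w\in\{w',w''\}$, and split according to where $\eta$ lies. If $\eta=\varphi_v\in\mathcal M$, then by \Cref{thm:relations} the word $v^k$ arises from $w$ via the length-preserving rules \eqref{eq:relations}, which keep the positions of the Latin letters fixed; as primitivity of $\varphi_v$ forces $v$ to contain a Latin letter, $w$ would carry at least $k\geq2$ Latin letters, contradicting the single Latin letter of $w''$ (and, symmetrically, the single Greek letter of $w'$). If $\eta=\varphi_v\circ E$ with $k$ odd, then $\eta^k$ lies in $\mathcal M\circ E$, whose incidence matrices have determinant $-1$, whereas that of $\varphi_w$ equals $+1$. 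If $\eta=\varphi_v\circ E$ with $k$ even, then \eqref{skladani} gives $\eta^k=\varphi_{(vF(v))^{k/2}}$ and the same argument yields that $w$ reduces to $(vF(v))^{k/2}$; but $vF(v)$ has equally many Latin and Greek letters, which against the letter counts of $w'$ and $w''$ forces $n=2$. Hence for $n\geq3$ neither $\varphi_{w'}$ nor $\varphi_{w''}$ is a proper power, and for $n=2$ (only $w'=a\beta$) one checks by hand that $a\beta$ is not obtained from $a\alpha,\alpha a,b\beta$ or $\beta b$ via \eqref{eq:relations}.

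The individual steps are short; the part I expect to require the most care is the bookkeeping around the small values of $n$ — confirming that the single family $\beta^{n-2}a\alpha$ realises the worst preperiod and the worst period simultaneously, and making sure the $F$-identification and the exclusion of proper powers behave correctly when $n$ is small enough that several of these phenomena degenerate.
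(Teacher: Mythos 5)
Your proposal follows exactly the route the paper intends (the paper gives no written proof, only the remark that the corollary follows by ``combining \Cref{prop:upper_bound} and the last two examples''): bound $\#\Der(\uu)$ by the number of distinct terms of $\bigl(\Delta^k(w)\bigr)_{k\geq1}$, which is preperiod plus period, split into the purely periodic case ($q\leq|w|\leq 3|w|-4$ since $|w|\geq2$) and the case $p\geq1$ ($p+q\leq(2|w|-3)+(|w|-1)$), and realise the two bounds by $a^{n-1}\beta$ and $\beta^{n-2}a\alpha$. Your additional care is welcome and correct: the $F$-identification is excluded for the upper-bound family by the Latin/Greek letter count (each $\Delta^k(w'')$ has exactly one Latin letter, whereas its $F$-image has $n-1$), and your verification of item (i) via letter counts, the determinant of the incidence matrix, and the relation $\eta^{2\ell}=\varphi_{(vF(v))^{\ell}}$ fills in a step the paper omits entirely.

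The one genuine obstruction is the endpoint $n=2$, which you propose to ``dispose of by inspection'' --- but the inspection does not go the way you hope. The six admissible normalized names of length $2$ are $a\beta$, $\beta a$, $b\alpha$, $\alpha b$, $b\beta$, $\beta b$ (the pair $a\alpha$, $\alpha a$ being excluded by the hypothesis $w\notin\{a,\alpha\}^*$), and each yields exactly one derivated word: for $a\beta$, $\beta a$, $b\alpha$, $\alpha b$ the sequence $\bigl(\Delta^k\bigr)_{k\geq1}$ is constant from the first term, while for $b\beta$ and $\beta b$ it alternates between $b\beta$ and $\beta b=F(b\beta)$, whose fixed points are identified up to exchange of letters (this is precisely the Fibonacci situation of \Cref{ex:derivated_words_of_Fibon}). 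Hence the upper bound $3\cdot2-4=2$ is not attained for $n=2$, and the attainment claim of the corollary should be read with $n\geq3$ for the word $w''$; this is a defect of the statement itself rather than of your argument, but your plan as written cannot close this case.
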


%
%

\section{Standard Sturmian morphisms and their reversals}

In this section we provide precise numbers of distinct derivated words for these three types of morphisms:
\begin{enumerate}
	\item $\psi$ is a standard morphism from  $\mathcal{M}$, i.e. $\psi\in \langle \varphi_b,\varphi_\beta\rangle$,
\item $\psi$ is a standard morphism from  $\mathcal{M}\circ E$, i.e.  $\psi\in \langle \varphi_b,\varphi_\beta\rangle\circ E$,
\item $\psi$ is a morphism from  $\langle \varphi_a,\varphi_\alpha\rangle$.

\end{enumerate}
First we explain the title of this section and the fact that the fourth type of Sturmian morphism, namely a Sturmian morphism from $  \langle \varphi_a,\varphi_\alpha\rangle \circ E$,  is not considered at all.

A standard Sturmian morphism is a morphism fixing some standard Sturmian word. A reversal morphism  $\overline{\psi}$ to a morphism $\psi$ is defined by $\overline{\psi}(0) = \overline{\psi(0)}$  and $\overline{\psi}(1) = \overline{\psi(1)}$.   Since $\varphi_a = \overline{\varphi_b}$ and $\varphi_\alpha = \overline{\varphi_\beta}$, any morphism in $\langle \varphi_a,\varphi_\alpha\rangle$ is just a reversal of a morphism in
$\langle \varphi_b,\varphi_\beta\rangle$.

   Due to the form of the morphisms $\varphi_a$ and $\varphi_\alpha$, any morphism $\eta \in \langle \varphi_a,\varphi_\alpha\rangle$ satisfies that  the letter $0$ is a prefix  of $\eta(0)$ and the letter $1$ is a prefix of $\eta(1)$.   As any   morphism $\xi \in \langle \varphi_a,\varphi_\alpha\rangle \circ E$ can be written in the form $\xi(0)= \eta(1)$  and $\xi(1)= \eta(0)$ for some $\eta \in \langle \varphi_a,\varphi_\alpha\rangle$, the morphism $\xi$   cannot have any fixed point.

\medskip

The normalized name $w$ of a standard morphism from $\mathcal{M}$ is composed of the letters $b$ and $\beta$ only. Thus $\Delta(w) = {\rm cyc}(w)$ (see \eqref{eq:def_of_cyc}).

To describe all standard morphisms we have to take into account also the morphisms of the form $\psi = \varphi_w\circ E$.
In this case $\psi^2 \in \langle \varphi_b, \varphi_\beta \rangle$, in particular $\psi^2 = \varphi_{wF(w)}$.
To describe the derivated words of fixed points of these standard morphisms, we need the notation
\[
	{\rm cyc_F}(w_1w_2w_3\cdots w_n) = w_2w_3\cdots w_nF(w_1) \,.
\]

\begin{prop}\label{cor:standard_sturmian}
	Let $\uu$ be a fixed point of a standard Sturmian morphism $\psi$ which is not a power of any other Sturmian morphism.
	\begin{enumerate}[(i)]
		\item If $\psi = \varphi_w$, then $\uu$ has $|w|$ distinct derivated words, each of them (up to a permutation of letters) is fixed by one of the  morphisms
		\[
			\varphi_{v_0}, \varphi_{v_1} , \varphi_{v_2}, \ldots, \varphi_{v_{|w|-1}}, \quad \text{where } v_k ={\rm cyc}^k(w) \text{ for  } k = 0, 1, \ldots, |w|-1.
		\] \label{cor:standard_sturmian_1}
		\item If $\psi = \varphi_w\circ E$, then $\uu$ has $|w|$ distinct derivated words, each of them (up to a permutation of letters) is fixed by one of the  morphisms
		\[
			\varphi_{v_0}\circ E, \varphi_{v_1} \circ E, \varphi_{v_2}\circ E, \ldots, \varphi_{v_{|w|-1}}\circ E, \quad \text{where } v_k ={\rm cyc}_F^k(w) \text{ for } k= 0, 1, \ldots, |w|-1. \label{cor:standard_sturmian_2}
		\]
	\end{enumerate}
\end{prop}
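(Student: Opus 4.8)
The plan is to derive both items from \Cref{thm:main_result_vetsina} and then determine how many distinct fixed points the resulting morphisms have; the hypothesis that $\psi$ is not a power of another Sturmian morphism enters only in this last counting step.

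For item $(i)$ we have $\psi=\varphi_w$ with $w\in\{b,\beta\}^*$, and since $\psi$ has an aperiodic fixed point, $w$ contains both a Latin and a Greek letter, i.e. both $b$ and $\beta$ (Lemma~\ref{lem:properties_of_sturm_morph}). As observed just before the statement, $\Delta(w)={\rm cyc}(w)$, so $\Delta^k(\psi)=\varphi_{v_k}$ with $v_k={\rm cyc}^k(w)$, and each $v_k$ is the normalized name of a primitive morphism with a unique fixed point. By \Cref{thm:main_result_vetsina}, $\Der(\uu)$ is precisely the set of fixed points of $\varphi_{v_0},\dots,\varphi_{v_{|w|-1}}$ (as $v_{|w|}=v_0$), so $\#\Der(\uu)\le|w|$. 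To get equality I would argue by contradiction, using the fact recalled just before \Cref{exa:pomoc6}: two normalized names of equal length have the same fixed point, up to a permutation of letters, only if they are equal or related by $F$. If the fixed points of $\varphi_{v_j}$ and $\varphi_{v_k}$ were equivalent for some $0\le j<k\le|w|-1$, then either ${\rm cyc}^{k-j}(w)=w$, in which case $w$ is a proper power $u^m$ and $\psi=\varphi_u^m$; or ${\rm cyc}^{k-j}(w)=F(w)$, and then applying $F$ (which commutes with ${\rm cyc}$) gives ${\rm cyc}^{2(k-j)}(w)=w$, forcing $w$ to be a proper power unless $2(k-j)=|w|$, in which case $w=pF(p)$ with $|p|=|w|/2$ and hence $\psi=\varphi_{pF(p)}=(\varphi_p\circ E)^2$ by~\eqref{skladani}. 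In every case $\psi$ is a power of a strictly shorter, hence different, Sturmian morphism, a contradiction; so the $|w|$ fixed points are distinct and item $(i)$ follows.

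For item $(ii)$, write $\psi=\varphi_w\circ E$ with $w\in\{b,\beta\}^*$. By~\eqref{skladani}, $\psi^2=\varphi_{wF(w)}$, and $wF(w)\in\{b,\beta\}^*$ contains both $b$ and $\beta$ (one coming from $w$, the other from $F(w)$), so $\psi^2$ is primitive with unique fixed point $\uu$. Again $\Delta(wF(w))={\rm cyc}(wF(w))$, so $\Delta^k(\psi^2)=\varphi_{{\rm cyc}^k(wF(w))}$. A one-step induction on $k$ (split $v_k={\rm cyc}_F^k(w)$ into first letter and remainder and use $F^2={\rm id}$) yields the identity ${\rm cyc}^k(wF(w))=v_kF(v_k)$ with $v_k={\rm cyc}_F^k(w)$, whence, again by~\eqref{skladani}, $\Delta^k(\psi^2)=(\varphi_{v_k}\circ E)^2$. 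Since $\varphi_{v_kF(v_k)}$ has a unique fixed point, that fixed point is also fixed by $\varphi_{v_k}\circ E$: if $\mathbf{y}$ is the unique fixed point of $(\varphi_{v_k}\circ E)^2$, then $(\varphi_{v_k}\circ E)(\mathbf{y})$ is one as well, hence equals $\mathbf{y}$. Thus, by \Cref{thm:main_result_vetsina} applied to $\psi^2$, every derivated word of $\uu$ is a fixed point of one of $\varphi_{v_0}\circ E,\dots,\varphi_{v_{|w|-1}}\circ E$; and since ${\rm cyc}^{k+|w|}(wF(w))=F\bigl({\rm cyc}^k(wF(w))\bigr)$, Remark~\ref{nerozlisuj} shows that $\Delta^k(\psi^2)$ and $\Delta^{k+|w|}(\psi^2)$ share their fixed point up to a permutation of letters, so this list already exhausts $\Der(\uu)$ and $\#\Der(\uu)\le|w|$.

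For the matching lower bound one proceeds exactly as in item $(i)$: an equivalence of the fixed points of $\Delta^j(\psi^2)$ and $\Delta^k(\psi^2)$ with $0\le j<k\le|w|-1$ forces ${\rm cyc}^t(wF(w))=wF(w)$ for some $t$ with $0<t<|w|$, i.e. $wF(w)$ is a proper power $u^m$ with $m\ge2$. A short word-equation argument (one may quote \Cref{lem:words_equations_F} or compute directly) rules this out unless $\psi$ is a power: if $m$ is even, $wF(w)=(u^{m/2})^2$ gives $w=F(w)$, impossible since $F$ fixes no letter; if $m$ is odd, one extracts $w=(pF(p))^{(m-1)/2}p$ for a nonempty $p$, so $\psi=\varphi_w\circ E=(\varphi_p\circ E)^m$ is a power of a strictly shorter Sturmian morphism — again a contradiction. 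Hence $\#\Der(\uu)=|w|$, completing item $(ii)$. The main obstacle is precisely this last step, namely turning ``$\psi$ is not a power of another Sturmian morphism'' into primitivity of $w$ (resp. of $wF(w)$); the word equation $wF(w)=u^m$ in item $(ii)$ is the only place carrying a genuinely nontrivial computation.
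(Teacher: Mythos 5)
Your proof is correct and follows the same overall strategy as the paper's: reduce to \Cref{thm:main_result_vetsina} using $\Delta(w)={\rm cyc}(w)$ on $\{b,\beta\}^*$ (passing to $\psi^2=\varphi_{wF(w)}$ in case (ii)), identify the names that are related by $F$, and then use the hypothesis that $\psi$ is not a power to show the $|w|$ listed fixed points are pairwise inequivalent. The only point of divergence is the final word-combinatorial step: where the paper applies \Cref{lem:words_equations_F} to the equations $zp=F(p)F(z)$, $zp=pF(z)$ and $zp=F(p)z$ arising from $v_s=v_t$ or $v_s=F(v_t)$, you instead exploit that $F$ commutes with ${\rm cyc}$ to reduce every case to a nontrivial cyclic symmetry of $w$ (resp.\ of $wF(w)$), hence to $w=u^m$ or $wF(w)=u^m$ with $m\ge 2$, which you then resolve by \Cref{lem:Lyndon2} together with a parity argument on $m$ yielding either $w=F(w)$ (impossible) or $w=(pF(p))^{\ell}p$; this variant needs only \Cref{lem:Lyndon2} rather than \Cref{lem:words_equations_F} and reaches the same contradictions with the non-power hypothesis.
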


\begin{proof}  (i)  Since $\psi = \varphi_w$ is  a standard morphism, its normalized name $w$ belongs to $\{b, \beta\}^*$ and $\Delta(w) = {\rm cyc}(w)$.  By Theorem \ref{thm:main_result_vetsina}, all derivated words of $\uu$ are fixed by one of the morphisms listed in (i). We only need to show that  fixed points of the listed morphisms differ. More precisely, we need to show that $v_s \neq v_t$ and $v_s\neq F(v_t)$ for all $0\leq t<s\leq |w|-1$.   Here the assumption that $\psi$ is not a power of any other Sturmian morphism is crucial.

Let us recall simple facts about powers of morphisms:
For any $\ell = 1, 2, \ldots$ and  $u \in \{b,\beta\}^+$ we have
\[
	(\varphi_{u})^{\ell}  =  \varphi_{u^\ell} \ ,  \quad (\varphi_{u} \circ E)^{2\ell} = \varphi_{(uF(u))^\ell} \quad \text{and} \quad (\varphi_{u} \circ E)^{2\ell + 1} = \varphi_{(uF(u))^\ell u} \circ E.
\]
If $\psi = \varphi_w$ is not a power of any Sturmian morphism, we have
\begin{equation}\label{nepower}
w \neq u^{\ell} \ \quad \text{and }\quad w\neq \bigl(uF(u)\bigr)^{k}\quad  \text{for any} \   u \in \{b, \beta\}^+  \ \text{and any }\  \ell, k \in \mathbb{N},  \ell\geq 2, k\geq 1.
\end{equation}

 Lemma \ref{lem:Lyndon2} implies that equation ${\rm cyc}^s(w) = {\rm cyc}^t(w)$ has no solution if $w\neq u^\ell$ and $0\leq t<s\leq |w|-1$.  Therefore all the normalized names $v_0,v_1, \ldots, v_{|w|-1}$ are distinct.

\medskip

Now assume that $v_s = {\rm cyc}^{s}(w) = F\bigl({\rm cyc}^{t}(w) \bigr) = F(v_t)$, where  $0\leq t < s\leq  |w|-1$.

Let $z$ and $p$ be  the words such that ${\rm cyc}^{s}(w) = zp$, where  $|z| = s - t$.
We have $zp = F(p)F(z)$ and by \Cref{lem:words_equations_F} there is $x$ such  that ${\rm cyc}^{s}(w) = zp = x(F(x)x)^i(F(x)x)^jF(x) = (xF(x))^{i + j + 1}$ for some non-negative integers $i, j$.
This implies that there is a factor $y$ of $xF(x)$ such that $|y| = |x|$ and $w = (yF(y))^{i + j + 1}$ which is a contradiction with \eqref{nepower}.

\medskip

(ii) If we apply Theorem \ref{thm:main_result_vetsina} to the morphism $\bigl(\varphi_w\circ E\bigr)^2 = \varphi_{wF(w)}$, we obtain the list of $2|w|$ normalized names ${\rm cyc}^{s}(wF(w))$, with  $s=0,1, \ldots, 2|w|-1$. As ${\rm cyc}^{|w|+i}(wF(w)) = {\rm cyc}^{i}(F(w)w)$,
all the derivated words are given by the fixed points of morphisms
\[
	\varphi_{v_0F(v_0)}, \varphi_{v_1F(v_1)}, \varphi_{v_2F(v_3)}, \ldots, \varphi_{v_{|w|-1}F(v_{|w|-1})}
\]
that are just squares of morphisms listed in Item (ii) of the proposition.
 To finish the proof,  we need to show that the fixed points of the listed morphisms do not coincide nor coincide after exchange of the letters   $0\leftrightarrow 1$.   In other words we need to show
$v_sF(v_s) \neq v_tF(v_t)$ and  $v_sF(v_s) \neq F(v_t)v_t$. \\

Assume the contrary. Then $v_s = v_t$ or $v_s=F(v_t)$ for some $t<s$.  If we put $k=s-t$,   then
$v_s ={\rm cyc}_F^k(v_t)$.  Let $v_t = zp$, where $|z| = k$, then $v_s = pF(z)$. Since the morphism $\psi = \varphi_w\circ E$ is not a power of other morphism we know that
\begin{equation}\label{nepower2}
w\neq \bigl(uF(u)\bigr)^{\ell}u \quad  \text{for any} \   u \in \{b, \beta\}^+  \ \text{and any }\  \ell \in \mathbb{N},  \ell\geq 1.
 \end{equation}

Two cases  $v_s = v_t$  and  $v_s=F(v_t)$ will be discussed separately. 
\begin{itemize}
\item If  $v_s = v_t$, then  $zp = pF(z)$ and   \Cref{lem:words_equations_F} says there is $x$ so that  $v_t = zp = (F(x)x)^{i + j}F(x)$, which contradicts \eqref{nepower2}.
\item If $v_s = F(v_t)$, then $zp = F(p)z$ and by \Cref{lem:words_equations_F} there is $x$ so that $v_t = zp = (F(x)x)^{i + j}F(x)$ which is again a contradiction with \eqref{nepower2}. \qedhere
\end{itemize}
\end{proof}

\begin{prop}\label{prop:number_for_aalpha}
Let $w \in \{\alpha, a\}^*$ be the normalized name of a primitive morphism $\psi$ such that the letter $a$ is a prefix of $w$. 
Moreover, assume that $\psi$ is not a power of any other Sturmian morphism.
\begin{enumerate}[(i)]
    \item The fixed point of $\psi$ starting with $0$ has exactly $1+|w|_\alpha$ distinct derivated words.
 	\item The fixed point of $\psi$ starting with $1$ has exactly $1+|w|_a$ distinct derivated words.
\end{enumerate}
\end{prop}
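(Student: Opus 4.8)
The plan is to strip $\uu$ down with \Cref{lem:revers_of_standard}, to count the derivated words of the resulting smaller fixed point via \Cref{thm:main_result_vetsina} together with the explicit behaviour of the orbit $\bigl(\Delta^k(\cdot)\bigr)_{k\ge 1}$ established inside the proof of \Cref{prop:upper_bound}, and finally to rule out coincidences among the morphisms produced, even up to the exchange $0\leftrightarrow 1$. Two standing facts will be used. First, the word $w$ is primitive: since $\varphi_w=\varphi_u^{\,d}$ whenever $w=u^d$, primitivity of $w$ follows from $\psi$ not being a power of another Sturmian morphism, and the same then holds for every conjugate of $w$. Second, $N$ commutes with $F$ (one checks that $F$ maps the rewriting rules~\eqref{eq:relations} to themselves and is order-preserving position by position), and --- using $E\circ\varphi_p\circ E=\varphi_{F(p)}$, see~\eqref{skladani}, and the remark preceding \Cref{exa:pomoc6} --- fixed points of $\varphi_p$ and $\varphi_q$ for normalized names $p,q$ with $|p|=|q|$ coincide up to a permutation of letters if and only if $q\in\{p,F(p)\}$.

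\emph{Item (i).} Let $\uu$ start with $0$. By \Cref{lem:revers_of_standard}(i), $\Der(\uu)=\{\mathbf v\}\cup\Der(\mathbf v)$ with $\mathbf v$ the unique fixed point of $\varphi_v$ and $v=b^{-1}N(wb)\in\{a,\beta\}^*$. A short induction on the block decomposition of $w$, in the style of the proof of \Cref{lem:jinam}, yields the sharper fact that $v$ is obtained from $w$ by relabelling every $\alpha$ to $\beta$ and then moving the first letter to the end; hence $v$ is a conjugate of a relabelling of $w$, so $v$ is primitive, $|v|_\beta=|w|_\alpha$, and $v$ ends in $a$. As $v\in\{a,\beta\}^*$ ends in $a$, it is a ``Case~2'' name in the proof of \Cref{prop:upper_bound}: here $\Delta(v)$ needs no rewriting, so it is a cyclic shift of the primitive word $v$ and is itself primitive, every $\Delta^k(v)$ with $k\ge1$ is a cyclic shift of $\Delta(v)$ obtained by moving a prefix block of the form $a^{*}\beta$ to the end, and the orbit $\{\Delta^k(v):k\ge1\}$ consists of exactly $|\Delta(v)|_\beta=|w|_\alpha$ distinct words, all lying in $\{a,\beta\}^*$. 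Since $F$ maps $\{a,\beta\}^*$ into $\{\alpha,b\}^*$, no two of these words are $F$-related, so \Cref{thm:main_result_vetsina} applied to $\varphi_v$ (primitive, with name outside $\{a,\alpha\}^*$) gives $\#\Der(\mathbf v)=|w|_\alpha$. Finally every $\Delta^k(v)$ with $k\ge1$ ends in $\beta$ and $F(v)$ ends in $\alpha$, neither of these being the last letter $a$ of $v$, so $\mathbf v\notin\Der(\mathbf v)$ and $\#\Der(\uu)=1+|w|_\alpha$.

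\emph{Item (ii).} Let $\uu$ start with $1$ and write $w=a^p\alpha z$ with $p\ge1$. Applying \Cref{lem:revers_of_standard}(ii) $p$ times is legitimate, since each intermediate name $\mathrm{cyc}^j(w)$ with $j<p$ starts with $a$ and is the normalized name of a primitive morphism, and $\varphi_a(\mathbf x)$ starts with $1$ exactly when $\mathbf x$ does; this yields $\Der(\uu)=\Der(\mathbf v_p)$, where $\mathbf v_p$ is the fixed point starting with $1$ of $\varphi_{\hat w}$, and $\hat w=\mathrm{cyc}^p(w)\in\{a,\alpha\}^*$ now starts with $\alpha$, is primitive, and satisfies $|\hat w|_a=|w|_a$. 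Write $\hat w=\alpha\hat w''$ and desubstitute $\varphi_\alpha$ by the $E$-conjugate of \Cref{lem:prevod_fi_a_na_fi_b}: if $\mathbf s$ starts with $1$ and $\mathbf s=\varphi_\alpha(\mathbf s')$ for some $\mathbf s'$, then $\mathbf s=\varphi_\beta(\mathbf z)$ for a unique Sturmian $\mathbf z$. This gives $\mathbf v_p=\varphi_\beta(\mathbf z)$, hence $\Der(\mathbf v_p)=\{\mathbf z\}\cup\Der(\mathbf z)$ by the $\varphi_\beta$-version of \Cref{thm:preimages_fi_b}. Applying $F$ to \Cref{lem:jinam}, using $N\circ F=F\circ N$, shows $N(\hat w\beta)=\beta\hat v$ with $\hat v\in\{\alpha,b\}^*$ ending in $\alpha$, $|\hat v|_b=|\hat w|_a=|w|_a$, and $\hat v$ a conjugate of a relabelling of $\hat w$ (hence primitive); in particular $\varphi_{\hat w\beta}=\varphi_\beta\circ\varphi_{\hat v}$, so from $\varphi_\beta(\mathbf z)=\varphi_{\hat w}\bigl(\varphi_\beta(\mathbf z)\bigr)=\varphi_{\hat w\beta}(\mathbf z)$ and injectivity of $\varphi_\beta$ we get $\mathbf z=\varphi_{\hat v}(\mathbf z)$, i.e.\ $\mathbf z$ is the unique fixed point of $\varphi_{\hat v}$. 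Now the argument of Item (i) repeats with $a$ and $\alpha$, $b$ and $\beta$ interchanged: $\hat v\in\{\alpha,b\}^*$ ending in $\alpha$ is a ``Case~2'' name, its $\Delta$-orbit consists of exactly $|\hat v|_b=|w|_a$ distinct words lying in $\{\alpha,b\}^*$, \Cref{thm:main_result_vetsina} gives $\#\Der(\mathbf z)=|w|_a$, and the last-letter comparison ($\Delta^k(\hat v)$ ends in $b$, $F(\hat v)$ ends in $a$, $\hat v$ ends in $\alpha$) gives $\mathbf z\notin\Der(\mathbf z)$. Therefore $\#\Der(\uu)=1+|w|_a$.

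The step I expect to be the main obstacle is keeping the symmetries straight in Item (ii): one needs the ``$\alpha$-side'' analogues of \Cref{lem:prevod_fi_a_na_fi_b}, \Cref{thm:preimages_fi_b}, \Cref{lem:jinam}, and of Case~2 in the proof of \Cref{prop:upper_bound}, all of which follow from the $E$- and $F$-symmetries of the Sturmian monoid, but in passing to which the roles of ``starts with $0$'' versus ``starts with $1$'' and of $|w|_a$ versus $|w|_\alpha$ genuinely interchange, so the translation must be carried out carefully. The other point that needs attention --- routine but essential --- is the block computation identifying $v$ and $\hat v$ as conjugates of relabellings of $w$ and $\hat w$; it is exactly the resulting primitivity of $v$ and $\hat v$ that turns the a~priori upper bounds coming from \Cref{thm:main_result_vetsina} and \Cref{prop:upper_bound} into the exact values $|w|_\alpha$ and $|w|_a$.
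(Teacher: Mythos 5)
Your proof is correct and follows essentially the same route as the paper: reduce via \Cref{lem:revers_of_standard} to a morphism $\varphi_v$ with $v\in\{a,\beta\}^*$ (resp.\ $\{\alpha,b\}^*$), observe that the $\Delta$-orbit consists of the cyclic shifts ending in $\beta$ (resp.\ $b$), use primitivity of $v$ to get the exact count $|v|_\beta=|w|_\alpha$, and rule out $F$-coincidences by the alphabets of $v$ and $F(v)$. The only differences are that you spell out details the paper leaves implicit (that $v$ is a conjugate of a relabelling of $w$, hence primitive) and that you carry out Item (ii) explicitly, which the paper dismisses as ``analogous''.
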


\begin{proof}  We prove only Item $(i)$, the proof of $(ii)$ is analogous. Let $\uu$ denote the fixed point starting with $0$.

Proposition~\ref{lem:revers_of_standard} says that  we have to count elements in the set $ \{{\bf v}\} \cup {\rm Der}(\bf v)$, where $\bf v$  is a fixed point of $\varphi_v$ with the normalized name $v=b^{-1}N(wb)$.  By Lemma \ref{lem:jinam}, the word $v \in\{a, \beta\}^*$. This property  of  $v$ implies that $\Delta^k (v)$ is equal to some cyclic shift ${\rm cyc}^j(v)$ having a suffix $\beta$.
There are $|v|_\beta$ cyclic shifts of $v$ with this property and hence this number is an upper bound for the period of the sequence  $\bigl(\Delta^k(v)\bigr)_{k\geq1}$.  By Lemma  \ref{lem:jinam},  the normalized name $v$ has a suffix  $a$ and  thus the word $v$ itself  does not appear in $\bigl(\Delta^k(v)\bigr)_{k\geq1}$.   We can conclude that  $\uu$ has at most  $1+|w|_\alpha$ derivated  words.  

For each  $k$ the iteration $\Delta^k (v)$ belongs to $\{a, \beta\}^*$ and consequently $F\bigl(\Delta^i(v)\bigr)$ belongs to $ \{a,b\}^*$.  Therefore,  $\Delta^j(v)\neq F\bigl(\Delta^i(v)\bigr)$ for any pair of positive  integers $i,j$. 

As $\psi$ is not a power of any other morphism, we can use the same technique as in the proof of \Cref{cor:standard_sturmian} to show that  ${\rm cyc}^i(v) \neq {\rm cyc}^j(v)$  for $i, j=1,\ldots, |v|$, $i\neq j$. This means that the  period of the sequence $\bigl(\Delta^k(v)\bigr)_{k\geq1}$   is  indeed equal to $|w|_\alpha$ and its preperiod is zero.
\end{proof}
\section{Comments and conclusions}  \label{sec:comments}

\begin{enumerate}[1.,labelwidth=1em,labelindent=0em,leftmargin=!]

\item

In \cite{AraBru05}, the authors studied derivated words only for standard Sturmian words ${\bf c}(\gamma)$.

However, they did not restrict their study to words fixed by a primitive morphism.

Let us show an alternative proof of their result.

The proof is a direct corollary of our \Cref{thm:preimages_fi_b} and the following result of \cite{BeSe_Lothaire}:

\begin{lem}[{\cite[Lemma 2.2.18]{BeSe_Lothaire}}] \label{zLothaira}

For any irrational $\gamma \in (0,1)$ we have
\[
\varphi_b(\mathbf{c}(\gamma)) = \mathbf{c}\left({\frac{\gamma}{1+\gamma}}\right).
\]
\end{lem}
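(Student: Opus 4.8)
The plan is to identify $\varphi_b(\mathbf{c}(\gamma))$ as the unique standard Sturmian word of slope $\gamma' = \frac{\gamma}{1+\gamma}$, which is $\mathbf{c}(\gamma')$. First, $\varphi_b(\mathbf{c}(\gamma))$ is Sturmian, being the image of the Sturmian word $\mathbf{c}(\gamma)$ under the Sturmian morphism $\varphi_b$. Its slope equals the frequency of the letter $1$: among the first $n$ letters of $\mathbf{c}(\gamma)$ there are $\sim \gamma n$ ones and $\sim (1-\gamma)n$ zeros, so their $\varphi_b$-image has length $\sim n + \gamma n = (1+\gamma)n$ and still contains $\sim \gamma n$ ones (each $1$ produces one $1$ via $\varphi_b(1)=01$, each $0$ produces none, and the boundary contributes a bounded error). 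Hence the frequency of $1$ in $\varphi_b(\mathbf{c}(\gamma))$, and therefore its slope, is $\frac{\gamma}{1+\gamma} = \gamma'$.

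It remains to prove that $\varphi_b(\mathbf{c}(\gamma))$ is standard, i.e. that every prefix of it is left special. Using $\varphi_b(0) = 0$ and $\varphi_b(1) = 01$, every nonempty prefix of $\varphi_b(\mathbf{c}(\gamma))$ has one of two shapes: (a) $\varphi_b(v)$ for some prefix $v$ of $\mathbf{c}(\gamma)$, or (b) $\varphi_b(v)0$ for some prefix $v$ of $\mathbf{c}(\gamma)$ such that $v1$ is again a prefix of $\mathbf{c}(\gamma)$ (the case of a prefix ending in the middle of an image block $\varphi_b(1) = 01$). In case (a): since $\mathbf{c}(\gamma)$ is standard, $v$ is left special, so both $0v$ and $1v$ are factors of $\mathbf{c}(\gamma)$; applying $\varphi_b$ shows that $0\varphi_b(v) = \varphi_b(0v)$ and $01\varphi_b(v) = \varphi_b(1v)$ are factors of $\varphi_b(\mathbf{c}(\gamma))$, hence so are $0\varphi_b(v)$ and $1\varphi_b(v)$, and $\varphi_b(v)$ is left special. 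In case (b): the prefix $v1$ of $\mathbf{c}(\gamma)$ is left special, so $0v1$ and $1v1$ are factors of $\mathbf{c}(\gamma)$; applying $\varphi_b$, the words $0\varphi_b(v)01$ and $01\varphi_b(v)01$ are factors of $\varphi_b(\mathbf{c}(\gamma))$, hence $0\varphi_b(v)0$ and $1\varphi_b(v)0$ are factors and $\varphi_b(v)0$ is left special.

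I expect the only friction to be the elementary bookkeeping in case (b) --- recognising that a prefix stopping inside the block $\varphi_b(1)=01$ should be lifted to the prefix $v1$ of $\mathbf{c}(\gamma)$ rather than to $v$ --- together with invoking the basic fact (from the same source as the lemma) that a prescribed irrational slope determines the standard Sturmian word uniquely. If one prefers a computation that avoids this last appeal and is fully explicit, one can argue with mechanical words instead: from $\mathbf{c}(\gamma) = \mathbf{s}(\gamma,\gamma)$ one has $c_k = \lfloor\gamma(k+2)\rfloor - \lfloor\gamma(k+1)\rfloor$, so $\varphi_b(c_0\cdots c_{n-1})$ has length $n + \lfloor\gamma(n+1)\rfloor$ and contains exactly $\lfloor\gamma(n+1)\rfloor$ ones; consequently the number of ones among the first $m$ letters of $\varphi_b(\mathbf{c}(\gamma))$ is $\lfloor\gamma(n+1)\rfloor$ whenever $n + \lfloor\gamma(n+1)\rfloor \leq m < (n+1) + \lfloor\gamma(n+2)\rfloor$, and a short check of floor inequalities shows this equals $\lfloor\gamma'(m+1)\rfloor$. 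Since $\lfloor\gamma'(m+1)\rfloor$ is precisely the number of ones among the first $m$ letters of $\mathbf{s}(\gamma',\gamma') = \mathbf{c}(\gamma')$, equality of these partial sums for all $m$ forces $\varphi_b(\mathbf{c}(\gamma)) = \mathbf{c}(\gamma')$ letter by letter.
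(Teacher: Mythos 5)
Your proof is correct, but note that the paper does not prove this statement at all: it is imported verbatim as Lemma~2.2.18 of \cite{BeSe_Lothaire} and used as a black box, so there is no internal proof to compare against. What you supply is a legitimate self-contained argument: the image $\varphi_b(\mathbf{c}(\gamma))$ is Sturmian, its slope is the frequency of $1$, which the block count $(1-\gamma)n\cdot 1+\gamma n\cdot 2=(1+\gamma)n$ letters containing $\gamma n$ ones correctly identifies as $\gamma/(1+\gamma)$; and standardness is preserved because every prefix of the image is either $\varphi_b(v)$ or $\varphi_b(v)0$ with $v$, resp. $v1$, a (left special) prefix of $\mathbf{c}(\gamma)$, and applying $\varphi_b$ to the two left extensions produces the two required left extensions of the image prefix. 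Your case split is exhaustive (one cannot stop strictly inside $\varphi_b(0)=0$), and the final appeal to uniqueness of the standard word of a given irrational slope is exactly the fact the paper itself records when introducing $\mathbf{c}(\gamma)$. The alternative mechanical-word computation is also sound --- the key floor inequalities $f\le\gamma(n+1)<f+1$ do give $\lfloor\gamma'(m+1)\rfloor=f$ on the stated range of $m$ --- though you assert rather than carry out that check; if you intend that version to stand alone, write out the two inequalities. What your approach buys is independence from the external reference; what the paper's citation buys is brevity, since the result is standard.
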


As we have already mentioned, the authors of \cite{AraBru05} required that any derivated word $\mathrm{d}_\uu(v)$ to a prefix $v$ of a Sturmian word $\uu$ starts with the same letter as the word $\uu$.

By interchanging letters $0\leftrightarrow 1$ in a characteristic word $\mathbf{c}(\gamma)$, we obtain the characteristic word $ \mathbf{c}(1-\gamma)$. If $\gamma <\tfrac12$ , then the continued fraction of $\gamma$ is of the form $ [0, c_1+1, c_2, c_3, \ldots]$ with $c_1>0$ and the continued fraction of $1-\gamma$ equals $[0, 1, c_1, c_2, c_3, \ldots]$. Clearly, $ \Der({\bf c}(\gamma))$ and $ \Der({\bf c}(1-\gamma))$ coincide up to a permutation of letters.
Without loss of generality we state the next theorem for the slope $\gamma <\tfrac12$ only.

\begin{thm}[\cite{AraBru05}]
Let ${\bf c}(\gamma$) be a standard Sturmian word and $\gamma = [0, c_1+1, c_2, c_3, \ldots]$ with $c_1>0$.
Then
\[
\Der({\bf c}(\gamma)) = \left \{{\bf c}(\delta) \colon \delta = [0, c_k+1-i, c_{k+1},c_{k+2}, \ldots] \text{ with } 0 \leq i\leq c_{k}-1 \text{ and } (k,i) \neq (1,0) \right \}.
\]
\end{thm}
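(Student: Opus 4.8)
The plan is to run the Sturmian desubstitution chain of Section~\ref{sec:der_wo_St_preim} on the standard word $\mathbf{c}(\gamma)$, applying Theorem~\ref{thm:preimages_fi_b} at each step and tracking the slope through Lemma~\ref{zLothaira}. Throughout I identify an infinite word with its image under $E$ (as the paper does, see Remark~\ref{nerozlisuj}), so that $\mathbf{c}(\delta)$ and $\mathbf{c}(1-\delta)$ become the same object; I also use the elementary continued-fraction identity $1-[0,1,d_2,d_3,\ldots]=[0,d_2+1,d_3,d_4,\ldots]$, valid for any $d_2,d_3,\ldots\ge 1$.

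First I would record the single-step rules. Lemma~\ref{zLothaira} says that $\varphi_b\bigl(\mathbf{c}([0,a_1,d_2,d_3,\ldots])\bigr)=\mathbf{c}([0,a_1+1,d_2,d_3,\ldots])$, since replacing $\delta$ by $\delta/(1+\delta)$ raises the first partial quotient by one. Consequently, whenever $a_1\ge 2$ we have $\mathbf{c}([0,a_1,d_2,\ldots])=\varphi_b\bigl(\mathbf{c}([0,a_1-1,d_2,\ldots])\bigr)$, and Theorem~\ref{thm:preimages_fi_b} yields
\[
\Der\bigl(\mathbf{c}([0,a_1,d_2,\ldots])\bigr)=\Der\bigl(\mathbf{c}([0,a_1-1,d_2,\ldots])\bigr)\cup\bigl\{\mathbf{c}([0,a_1-1,d_2,\ldots])\bigr\}.
\]
If the first partial quotient is $1$, then $\mathbf{c}([0,1,d_2,\ldots])$ starts with $1$ and is therefore not a $\varphi_b$-image; but under $E$ it becomes $\mathbf{c}([0,d_2+1,d_3,\ldots])$ by the identity above, whose first partial quotient is again $\ge 2$. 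Starting from $\gamma=[0,c_1+1,c_2,\ldots]$, peeling off $c_1$ layers of $\varphi_b$ and then relabelling, I get
\[
\Der\bigl(\mathbf{c}(\gamma)\bigr)=\bigl\{\mathbf{c}([0,c_1+1-i,c_2,c_3,\ldots]):1\le i\le c_1\bigr\}\cup\Der\bigl(\mathbf{c}([0,c_2+1,c_3,\ldots])\bigr),
\]
where the term with $i=c_1$ equals $\mathbf{c}([0,1,c_2,\ldots])=\mathbf{c}([0,c_2+1,c_3,\ldots])$, the seed of the next round.

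Next I would iterate over all levels and re-index. Setting $\gamma^{(0)}=\gamma$ and $\gamma^{(m)}=[0,c_{m+1}+1,c_{m+2},\ldots]$ for $m\ge 1$, the displayed formula applied at level $m$ reads $\Der(\mathbf{c}(\gamma^{(m-1)}))=\{\mathbf{c}([0,c_m+1-i,c_{m+1},\ldots]):1\le i\le c_m\}\cup\Der(\mathbf{c}(\gamma^{(m)}))$, and unfolding gives
\[
\Der(\mathbf{c}(\gamma))=\bigcup_{k\ge 1}\bigl\{\mathbf{c}([0,c_k+1-i,c_{k+1},c_{k+2},\ldots]):1\le i\le c_k\bigr\}.
\]
Re-indexing the $i=c_k$ member of level $k$ as the $i=0$ member of level $k+1$ rewrites this union as $\{\mathbf{c}(\delta):\delta=[0,c_k+1-i,c_{k+1},\ldots],\ 0\le i\le c_k-1,\ (k,i)\ne(1,0)\}$, which is the asserted set; dropping $(k,i)=(1,0)$ reflects that $\mathbf{c}(\gamma)$ itself is listed only if it recurs at a later level.

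The delicate point — and the main obstacle I foresee — is making the passage to the infinite union rigorous, since Theorem~\ref{thm:preimages_fi_b} governs a single desubstitution only. The inclusion $\supseteq$ is immediate: each $\mathbf{c}(\delta)$ on the right already appears after at most $k$ rounds, hence lies in $\Der(\mathbf{c}(\gamma))$ by finitely many applications of Theorem~\ref{thm:preimages_fi_b}. For $\subseteq$ I would argue with prefix lengths via Proposition~\ref{prop:der_of_images_fi_b}: if $|w|>1$ then $d_{\uu}(w)=d_{\uu'}(w')$ for a right special prefix $w'$ of the $\varphi_b$-preimage $\uu'$ with $|w'|<|w|$, whereas the interspersed $E$-relabellings preserve lengths; since the length strictly decreases at every $\varphi_b$-desubstitution and cannot fall below $1$, after finitely many rounds the relevant prefix has length $1$, and then Item~\ref{it:der_of_images_fi_b_1} of Proposition~\ref{prop:der_of_images_fi_b} shows the derivated word to be exactly the then-current word $\mathbf{c}(\delta)$, one of the listed words. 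It is essential here that every slope encountered has first partial quotient $\ge 2$ (guaranteed by $c_m+1\ge 2$), so that only $\varphi_b$ and never $\varphi_a$ is used; this is what forces the length to drop at every round and the process to terminate.
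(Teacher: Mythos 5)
Your proof follows essentially the same route as the paper's: desubstitute by $\varphi_b$ via Lemma~\ref{zLothaira} to decrement the first partial quotient, switch to the complementary slope (the $E$-image) when that quotient reaches $1$, and apply Theorem~\ref{thm:preimages_fi_b} at each step. The only difference is that you explicitly justify the passage to the infinite union via the decreasing-prefix-length argument from Proposition~\ref{prop:der_of_images_fi_b}, a point the paper's proof leaves implicit in its final ``we conclude''; this is a welcome extra detail but not a different method.
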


\begin{proof}

Let $\delta = [0, d_1+1, d_2, d_3, \ldots]$ with $d_1>0$.
Set $\delta' =\frac{\delta}{1-\delta}$.
It is easy to see that $\delta'= [0,d_1, d_2, d_3, \ldots]$.
Since $\delta' \in (0,1)$ and $\delta = \frac{\delta'}{1+\delta'}$, \Cref{zLothaira} implies that ${\bf c}(\delta) = \varphi_b({\bf c}(\delta'))$.
Applying \Cref{thm:preimages_fi_b} we obtain that $\Der({\bf c}(\delta)) = \{{\bf c}(\delta')\} \cup \Der({\bf c}(\delta'))$.
We have transformed the original task to the task to determine the set of derivated words of the standard sequence ${\bf c}(\delta')$.
If $\delta' < \tfrac12$, i.e., $d_1 > 1$, we repeat this procedure with $\delta'$.
If $d_1 = 1$, i.e., $\delta' > \tfrac12$, we use the fact that $\Der({\bf c}(\delta))$ and $ \Der({\bf c}(1-\delta))$ coincide, and replace $\delta'$ by $1-\delta'$ and repeat the procedure with its continued fraction $[0, d_2+1, d_3, d_4, \ldots]$.

In the terms of corresponding continued fractions, one step of the described procedure can be represented as
\[
[0, d_1+1, d_2, d_3, \ldots] \mapsto \begin{cases}
[0,d_1, d_2, d_3, \ldots] & \text{if } d_1 > 1, \\
[0, d_2+1, d_3, d_4, \ldots] & \text{if }d_1 = 1.
\end{cases}
\]
We conclude that  the set $\Der({\bf c}(\gamma))$ is in the form given in the theorem.
\end{proof}
\item  In case that $\uu$ is a fixed point of a standard Sturmian morphisms, we have determined the exact number of distinct derivated words of $\uu$, see \Cref{cor:standard_sturmian}.
Let us mention that this result can be inferred from \cite{AraBru05}.
We also have provided the exact number of derivated words when $\uu$ is a  fixed point of a Sturmian morphisms which has two fixed  points, see \Cref{prop:number_for_aalpha}.

For fixed points of other Sturmian morphisms we only gave an upper bound on the number of their distinct derivated words, see \Cref{coro:bounds_on_nr_of_der_words}.
To give an exact number, one needs to describe when the normalized name $w \in \{a,b,\alpha, \beta\}^*$ corresponds to  some power of a Sturmian morphism. 
Clearly, $w$ may be a normalized name of a  power of a Sturmian morphism without $w$ being a power of some other word from $ \{a,b,\alpha, \beta\}^*$. For example, if $v=\alpha ba\alpha\alpha = N(v)$, then the normalized name of $v^3$  is  the primitive word $ N(v^3)=\alpha bb\beta\beta\beta ba\beta\beta\beta aa \alpha\alpha$.   

\item The key tool we used to determine  the set  $\Der({\bf u})$  is provided by \Cref{thm:preimages_fi_b,thm:preimages_fi_a}.
We believe that an analogue of these theorems can be found also for Arnoux--Rauzy words over multiliteral alphabet.
For definition and  properties of these words see \cite{Be_survey_corr,GlJu}.

In  \cite{CaLaLe17}, the authors described a  new class of ternary sequences with complexity $2n+1$.
These sequences are constructed from infinite products of  two morphisms.
The structure of their bispecial factors suggests that due to result of \cite{BaPeSt}, any derivated  word of such a word is over a ternary alphabet.
Probably, even for these words  an  analogue of \Cref{thm:preimages_fi_b,thm:preimages_fi_a} can be proved.
Other candidates for generalization of Theorems~\ref{thm:preimages_fi_b} and~\ref{thm:preimages_fi_a} seem to be the infinite words whose language forms tree sets as defined in~\cite{tree_sets}.
\end{enumerate}

\section*{Acknowledgements}

K.M., E.P. and Š.S. acknowledge financial support by the Czech Science Foundation grant GA\v CR 13-03538S. K. M. also acknowledges financial support by the Czech Technical University in Prague grant SGS17/193/OHK4/3T/14. 

\bibliographystyle{siam}
\IfFileExists{biblio.bib}{\bibliography{biblio}}{\bibliography{../../!bibliography/biblio}}

\end{document}